\def\namedlabel#1#2{\begingroup
    #2%
    \def\@currentlabel{#2}%
    \label{#1}\endgroup
}
\theoremstyle{plain}
\newtheorem{theorem}{Theorem}[section]
\newtheorem{corollary}[theorem]{Corollary}
\newtheorem{lemma}[theorem]{Lemma}
\newtheorem{proposition}[theorem]{Proposition}
\theoremstyle{definition}
\newtheorem{remark}[theorem]{Remark}
\newtheorem{example}[theorem]{Example}
\numberwithin{equation}{section}
\renewcommand\labelenumi{\textup{\alph{enumi})}}
\renewcommand\theenumi\labelenumi
\makeatletter\renewcommand{\p@enumii}{}\makeatother 
\renewcommand{\leq}{\leqslant}
\renewcommand{\geq}{\geqslant}
\newcommand{\loc}{\mathrm{loc}}
\DeclareMathOperator{\supp}{supp}
\newcommand{\cK}{\mathcal{K}}
\newcommand{\R}{\mathds{R}}
\newcommand{\I}{\mathds{1}}
\newcommand{\Z}{\mathds{Z}}
\newcommand{\pr}{\mathbf{P}}
\newcommand{\ex}{\mathbf{E}}
\begin{document}
\title[Lifshitz tail for L\'{e}vy alloy-type model]
{Lifshitz tail for continuous Anderson models driven by L\'{e}vy operators}
\author[K.~Kaleta]{Kamil Kaleta}
\address[K.~Kaleta]{Faculty of Pure and Applied Mathematics\\ Wroc{\l}aw University of Science and Technology\\ ul. Wybrze{\.z}e Wyspia{\'n}skiego 27, 50-370 Wroc{\l}aw, Poland}
\email{kamil.kaleta@pwr.edu.pl}

\thanks{Research supported by the National Science Center, Poland, grant no.\ 2015/17/B/ST1/01233.}

\author[K.~Pietruska-Pa{\l}uba]{Katarzyna Pietruska-Pa{\l}uba}
\address[K.~Pietruska-Pa{\l}uba]{Institute of Mathematics \\ University of Warsaw
\\ ul. Banacha 2, 02-097 Warszawa, Poland}
\email{kpp@mimuw.edu.pl}

\maketitle

\baselineskip 0.5 cm

\begin{abstract}
We investigate the behavior near zero of the integrated density of states $\ell$ for random Schr\"{o}dinger operators $\Phi(-\Delta) + V^{\omega}$ in $L^2(\mathbb R^d)$, $d \geq 1$, where $\Phi$ is a complete Bernstein function such that for some $\alpha \in (0,2]$, one has $ \Phi(\lambda) \asymp \lambda^{\alpha/2}$, $\lambda \searrow 0$, and $V^\omega (x) = \sum_{\mathbf i\in\Z^d} q_{\mathbf i}(\omega) W(x-\mathbf i)$ is a random nonnegative alloy-type potential with compactly supported single site potential $W$.
 We prove that there are constants $C, \widetilde C,D, \widetilde D>0$ such that
	$$
	-C \leq\liminf_{\lambda \searrow 0} \frac{\lambda^{d/\alpha}}{|\log F_q(D \lambda)|}{\log \ell(\lambda)}
	\qquad \text{and} \qquad \limsup_{\lambda \searrow 0} \frac{\lambda^{d/\alpha}}{|\log F_q(\widetilde D \lambda)|}\log \ell(\lambda) \leq -\widetilde C,
	$$
where $F_q$ is the common cumulative distribution function of the lattice random variables $q_{\mathbf i}$.  In particular, we identify how the behavior of $\ell$ at zero depends on the lattice configuration.
For typical examples of $F_q$ the constants $D$ and $\widetilde D$ can be eliminated from the statement above.  We combine  probabilistic and analytic methods which allow to treat, in  a unified manner,  both  local and non-local kinetic terms such as the Laplace operator, its fractional powers and the quasi-relativistic Hamiltonians.
\bigskip

{\bf MSC Subject Classification (2010):} Primary 82B44, 60K37, 60G51; Secondary 47D08, 47G30

\smallskip

{\bf Keywords:}
Bernstein functions, L\'evy processes, Random local and nonlocal Schr\"{o}dinger operator, Alloy-type potential, non-local Anderson problem, Integrated density of states, Lifshitz tail, Tauberian theorem
\end{abstract}

\bigskip\bigskip
\section{Introduction and statement of results}

Let $\Phi$ be a \emph{complete Bernstein function} such that $\lim_{\lambda \searrow 0} \Phi(\lambda)=0$ and let
\begin{equation}\label{eq:oper-def}
H^\omega=\Phi(-\Delta) + V^\omega
\end{equation}
be a random Schr\"{o}dinger operator in $L^2(\R^d)$ with an \emph{alloy-type} potential
\begin{equation}\label{eq:pot-def}
V^\omega (x) = \sum_{\mathbf i\in\Z^d} q_{\mathbf i}(\omega) W(x-\mathbf i),\quad x\in\mathbb R^d,
\end{equation}
where $\{q_{\mathbf i}\}_{\mathbf i\in \Z^d}$ is a sequence of i.i.d.\ nonnegative  and nondegenerate  random variables  over the probability space $(\Omega, \mathcal A, \mathbb Q),$ with cumulative distribution function $F_q(t)=\mathbb Q[q\leq t],$ and $W:\R^d\to\mathbb [0,\infty)$ is a sufficiently regular nonnegative \emph{single-site potential}. The class of kinetic terms $\Phi(-\Delta)$ we consider 
 contains both local operators such as the classical Laplacian $-\Delta$ (for $\Phi(\lambda)=\lambda$) as well as a wide range of non-local pseudo-differential operators which are of great interest in mathematical physics. The most prominent examples in this class are the \textit{fractional Laplacians} $(-\Delta)^{\alpha/2}$ (for $\Phi(\lambda)=\lambda^{\alpha/2}$, $\alpha \in (0,2)$) and the \textit{quasi-relativistic operators} $(-\Delta+m^{2/\vartheta})^{\vartheta/2}-m$ (for $\Phi(\lambda)=(\lambda+m^{2/\vartheta})^{\vartheta/2}-m$, $\vartheta \in (0,2)$, $m>0$) \cite{bib:CMS, bib:J, bib:SSV}.

Under the regularity assumptions {\bf (B)}, {\bf (Q)}  and {\bf (W)} (stated below) on  the  Bernstein function $\Phi$, the distribution function $F_q$, and the single-site potential $W$, respectively, we study the asymptotic behavior of {\em the integrated density of states} (IDS) for the operator $H^\omega$  at the bottom of its spectrum.  The precise definition of IDS  is given in Section \ref{sec:dirichlet}. We will use the same letter $\ell$ to denote both the IDS (a measure) and its cumulative distribution function, i.e. $\ell(\lambda):=\ell([0,\lambda]).$

In 1965, I.M. Lifshitz discovered, on  physical grounds, that the density of states $\ell(\lambda)$  of certain random Hamiltonian $H^{\omega}= H_0 + V^{\omega}$ displays unusually fast decay near the bottom $\lambda_0$ of its spectrum $\sigma(H^{\omega})$: it behaves roughly as $\exp(-c(\lambda-\lambda_0)^{-d/2})$ \cite{bib:Lif}. Since then, such a behavior has been called `the Lifshitz tail'. For $H_0= -\Delta$ (or $-\Delta$ with periodic potential) and for various classes of random potentials $V^{\omega}$  it has been widely studied and rigourously proven in both  continuous (Benderskii and Pastur \cite{bib:BP}, Friedberg and Luttinger \cite{bib:FL}, Luttinger \cite{bib:Lut}, Nakao \cite{bib:Nak}, Pastur \cite{bib:Pas}, Kirsch and Martinelli \cite{bib:KM1, bib:KM2}, Mezincescu \cite{bib:Mez}, Kirsch and Simon \cite{bib:KS}, Kirsch and Veseli\'c \cite{bib:KV}) and  discrete (Fukushima \cite{bib:F}, Fukushima, Nagai and Nakao \cite{bib:FNN}, Nagai \cite{bib:Nag}, Romerio and Wreszinski \cite{bib:RW}, Simon \cite{bib:Sim}) settings (both of these lists are far from being complete). Note in passing that these random Hamiltonians typically exhibit the so-called spectral  localization  (see e.g. Combes and Hislop \cite{bib:Com-His}, Bourgain and Kenig \cite{bib:Bou-Ken},Germinet, Hislop and Klein \cite{bib:Ger-His-Kle}, and the references in these papers). It is known that the Lifshitz singularity is a strong indication for this property to hold and rigorous proofs of localization often use the approximation of the IDS resulting from the Lifshitz asymptotics (see e.g.\ the discussion in the papers by Klopp \cite{bib:Klopp1, bib:Klopp2} and Kirsch and Veseli\'c \cite{bib:KV}).

One of the best studied cases in the classical setting are the Poisson-type potentials.
These random potentials, defined as
    $$
V^{\omega}(x) = \int_{\R^d} W(x-y) \mu^{\omega}({\rm d}y),
$$
where $W$ is a sufficiently regular non-negative profile function and $\mu^{\omega}$ is the Poisson random measure on $\R^d$ with intensity $\nu >0$, were first rigorously investigated in the papers of Nakao \cite{bib:Nak} and Pastur \cite{bib:Pas}.
 This special framework allowed to apply the Donsker-Varadhan large deviations technique to show the following strong statement:
\begin{align}\label{eq:limit-Poiss}
\lim_{\lambda \searrow 0} \lambda^{d/2} \log \ell(\lambda) = - C(d)\,\nu,
\end{align}
where $C(d)$ is an explicit constant.
Later, this result was extended by Okura \cite{bib:Oku} to more general operators $H_0=-L$, where $L$ is a pseudo-differential operator with sufficiently regular Fourier symbol $\Psi(\xi)$ (this class includes $\Delta$ and many other non-local operators $-\Phi(-\Delta)$ studied in the present paper), with Poissonian random potential $V^{\omega}$. It was also the subject of research on less regular spaces such as fractals, see \cite{bib:KPP1,bib:KK-KPP2}.

Back to the lattice (alloy-type) potentials, let us emphasize that the Lifshitz behavior for $H_0=-\Delta$ and non-negative potentials as in \eqref{eq:pot-def} (with $\lambda_0=0$) was typically established in a form somewhat weaker than \eqref{eq:limit-Poiss}, namely
\begin{align}\label{eq:logloglimit}
\lim_{\lambda \searrow 0} \frac{\log |\log \ell(\lambda)|}{\log \lambda}= -\kappa.
\end{align}
Below, this will be referred to as the `loglog statement'. Here $\kappa = d/2$ or $\kappa=d/\beta$, where $\beta >0$ is the parameter describing the decay rate of the single site potential $W$ at infinity (see e.g.\ Kirsch and Martinelli \cite[Theorem 7]{bib:KM2}, Kirsch and Simon \cite[Theorem 1]{bib:KS}).
Kirsch and Martinelli also gave some  sufficient conditions for the existence of constants $C, \widetilde C>0$ such that $e^{-C \lambda^{-d/2}} \leq \ell(\lambda) \leq e^{-\widetilde C \lambda^{-d/2}}$, for $\lambda$ close to zero.

Recently, Kaleta and Pietruska-Pa{\l}uba \cite{bib:KK-KPP-alloy-stable} considered the case of $H_0 = (-\Delta)^{\alpha/2}$, $\alpha \in (0,2]$, and alloy-type potentials $V^{\omega}$ as in \eqref{eq:pot-def} with bounded, compactly supported single-site potentials that are separated from zero in a vicinity of zero, under the assumption that $F_q(\kappa)>0$ for $\kappa>0$. The authors were able to prove that
\begin{align}\label{eq:alloy_atom}
\lim_{\lambda \searrow 0} \lambda^{d/\alpha}\log \ell(\lambda) = -C(d,\alpha) \, \log\frac{1}{F_q(0)} .
\end{align}
Clearly, this limit is finite if and only if $F_q(0)>0$, i.e.\ when the distribution of the lattice random variables has an atom at zero. If this holds, the resulting asymptotics is very close to that in \eqref{eq:limit-Poiss}, known for the Poissonian model (see the more detailed discussion in the introduction of the cited paper). This result shows also that when $F_q(0)=0$, then the limit is infinite, so $\lambda^{d/\alpha}$ is not a correct normalization term for \eqref{eq:alloy_atom}. The correct rate for
$\ell(\lambda)$ should be faster and depend on the behavior of the distribution of the lattice random variables near zero.

This feature motivated our research in the present paper. Our main result, 
 addressing this problem,  is given in Theorem \ref{th:IDS-asymp} below. Interestingly, quite often, such a delicate influence is lost by taking the double logarithm in the `loglog statements' as in \eqref{eq:logloglimit} (cf. Example \ref{ex:distr} (1)-(2) below). As we will see below, a more detailed analysis is required in this case.

Before we pass to the presentation of our results, we introduce the framework assumptions.
They read as follows.

\begin{itemize}	
	\item[\bf (B)] $\Phi$ is a complete Bernstein function satisfying
	\begin{align}\label{eq:basic_ass_BF}
	\lim_{\lambda \to \infty} \frac{\Phi(\lambda)}{\log \lambda} = \infty.
	\end{align}
	and such that there there exist $\alpha\in (0,2],$ $C_1,C_2,\lambda_0>0$ for which
	\begin{equation}\label{eq:assum-phi-close-to-0}
	C_1 \lambda^{ \alpha/2}\leq 	\Phi(\lambda)\leq C_2\lambda^{ \alpha/2},\quad \lambda<\lambda_0.
	\end{equation}	
	
	\item[\bf (Q)] The random variables $ q_{\mathbf i},$ $\mathbf i\in\mathbb Z^d,$ defined on a probability space $(\Omega,\mathcal A, \mathbb Q),$ are independent copies of a non-negative  and non-degenerate (i.e. not equal to a constant a.s.) random variable $q$.  Moreover, denoting by $F_q$  the cumulative distribution function of  $q$, i.e.\ $F_q(\kappa):= \mathbb Q(q \leq \kappa)$,  we assume that  $F_q(\kappa)>0$ for all $\kappa >0$ and  that there  exists $\kappa_0>0$ such that $F_q\big|_{[0,\kappa_0]}$ is continuous (left discontinuity at 0 is permitted).
	
	\item[\bf (W)] The single-site potential $W:\R^d\to \R_+$ has compact support included in $[-M_0,M_0]^d,$
	for certain $M_0\in\Z_+,$ $W\in L^2(\R^d),$ and $\|W\|_2>0.$ Moreover, $W$ belongs to the Kato class of the operator $\Phi(-\Delta).$
\end{itemize}
For precise definitions of Bernstein functions, Kato class etc.\ we refer the reader to Section \ref{sec:prel} below. Note that our assumption {\bf (B)}
covers a wide range of complete Bernstein functions $\Phi$ leading to various important classes of operators $\Phi(-\Delta)$. Some of them are listed in Example \ref{ex:bernstein} in Section \ref{sec:bernstein}. For instance, if $\Phi(\lambda)=\lambda^{\alpha/2}$, $\alpha \in (0,2]$ (the Laplace operator and its fractional powers), then \eqref{eq:assum-phi-close-to-0} clearly holds with the same $\alpha$. For $\Phi(\lambda)=(\lambda+m^{2/\vartheta})^{\vartheta/2}-m$, with $\vartheta \in (0,2)$ and $m>0$ (the quasi-relativistic operators), we have to choose $\alpha=2$ in \eqref{eq:assum-phi-close-to-0}.

The assumption {\bf (W)} is also quite general. It automatically holds for nonnegative $W$'s that are bounded and  of compact support. However,  singular functions $W$ are allowed as well (further details are given in Example \ref{ex:singular} in Section \ref{sec:potentials}).

Our main result reads as follows.

\begin{theorem}\label{th:IDS-asymp}
	Let the assumptions {\bf (B)}, {\bf (Q)} and {\bf (W)} hold  and let $\ell$ be the IDS of the random Schr\"odinger operator defined in \eqref{eq:oper-def}--\eqref{eq:pot-def}. Then there exist constants $C, \widetilde C,D, \widetilde D>0$ such that
	$$
	-C \leq\liminf_{\lambda \searrow 0} \frac{\lambda^{d/\alpha}}{g(D/\lambda)}{\log \ell(\lambda)}
	$$
		and
	$$
		\limsup_{\lambda \searrow 0} \frac{\lambda^{d/\alpha}}{g(\widetilde D/\lambda)}\log \ell(\lambda) \leq -\widetilde C,
	$$
where $g(x)= \log\frac{1}{F_q(D_0/x)}$ and $D_0$ is another nonnegative constant \textup{(}defined in \eqref{eq:c-zero}\textup{)}.
\end{theorem}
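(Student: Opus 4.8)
The plan is to follow the classical two-sided scheme for Lifshitz tails, using the Dirichlet bracketing / Neumann bracketing reduction to cube eigenvalue problems together with the probabilistic control of the random potential on large boxes, and then to quantify everything in terms of $F_q$ near zero, which is where the new phenomenon sits. First I would recall the periodic approximation of the IDS: by the standard ergodicity and Dirichlet--Neumann bracketing arguments (here applied to $\Phi(-\Delta)$, using that it is a nonlocal operator of order $\alpha$, so one should work with the Feynman--Kac / Dirichlet form description rather than boundary conditions), one reduces the estimate on $\ell(\lambda)$ to estimating, for boxes $Q_L$ of side $L = L(\lambda) \sim c\lambda^{-1/\alpha}$, the probability $\mathbb{Q}\big[\lambda_1^{Q_L}(H^\omega) \le \lambda\big]$, where $\lambda_1^{Q_L}$ is the bottom eigenvalue of the restricted operator. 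The scaling $L \sim \lambda^{-1/\alpha}$ is dictated by \textbf{(B)}: the free operator on a box of side $L$ has ground state energy of order $\Phi(L^{-2}) \asymp L^{-\alpha}$ by \eqref{eq:assum-phi-close-to-0}.

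For the \emph{upper bound on $\ell$} (the $\limsup \le -\widetilde C$ part), the mechanism is that to have a low eigenvalue one needs the potential to be uniformly small on the box: using that $W \ge 0$ and $\|W\|_2 > 0$ with compact support in $[-M_0,M_0]^d$, a lower bound $\langle H^\omega u, u\rangle \ge \langle V^\omega u, u\rangle$ together with a standard argument (e.g.\ testing against the free ground state, or an IMS-type localisation) forces all the $q_{\mathbf i}$ with $\mathbf i$ in a fraction of the box of cardinality $\asymp L^d \asymp \lambda^{-d/\alpha}$ to be below some threshold $\kappa \asymp D_0 \lambda$ (the constant $D_0$ from \eqref{eq:c-zero} encoding the single-site normalisation). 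By independence this probability is $\le F_q(D_0\lambda)^{c L^d}$, and taking logarithms gives $\log \ell(\lambda) \lesssim -c\,\lambda^{-d/\alpha}\,\bigl|\log F_q(\widetilde D_0\lambda)\bigr| = -c\,\lambda^{-d/\alpha} g(\widetilde D/\lambda)$, which is exactly the claimed form (after absorbing constants into $\widetilde D$, $\widetilde C$; the need to shift the argument of $F_q$ by a multiplicative constant when passing through these inequalities is why a $\widetilde D \ne 1$ appears).

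For the \emph{lower bound on $\ell$} (the $\liminf \ge -C$ part), I would exhibit a cheap-enough event producing a low eigenvalue: choose the box $Q_L$ with $L \asymp \lambda^{-1/\alpha}$ and ask that all lattice random variables $q_{\mathbf i}$ with $\mathbf i$ within distance $M_0$ of $Q_L$ be at most $\kappa \asymp D_0\lambda$; on this event $V^\omega \le \kappa\,\|\sum W(\cdot - \mathbf i)\|_\infty =: c\kappa$ on $Q_L$, so $\lambda_1^{Q_L}(H^\omega) \le \lambda_1^{Q_L}(\Phi(-\Delta)) + c\kappa \le C' L^{-\alpha} + c\kappa \le \lambda$ for a suitable choice of the constant relating $L$ to $\lambda$ (this uses the upper Dirichlet/Neumann eigenvalue bound on the cube for $\Phi(-\Delta)$, again via \eqref{eq:assum-phi-close-to-0}). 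The probability of this event is $\ge F_q(D_0\lambda)^{(L+2M_0)^d} \ge \exp\!\big(-C\,\lambda^{-d/\alpha} g(D/\lambda)\big)$, and combined with the periodic approximation this yields $\log \ell(\lambda) \ge -C\,\lambda^{-d/\alpha} g(D/\lambda)$.

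The main obstacle I expect is two-fold and nonlocal in nature. First, because $\Phi(-\Delta)$ is nonlocal, the Dirichlet--Neumann bracketing that is routine for $-\Delta$ has to be replaced by the correct decoupling (via the subordinate Brownian motion / Feynman--Kac representation, trace estimates, and the Kato-class hypothesis in \textbf{(W)} to handle the singular part of $W$); controlling the error terms in the periodic approximation of the IDS in this generality, uniformly as $\lambda \searrow 0$, is the delicate analytic point, and it is presumably where assumption \eqref{eq:basic_ass_BF} (ruling out too-slowly-growing $\Phi$, so that the heat semigroup is ultracontractive with good bounds) is used. Second, on the upper-bound side one must convert ``$\lambda_1^{Q_L} \le \lambda$'' into ``many $q_{\mathbf i}$ are small'' \emph{quantitatively and with the right constant in front of $\lambda$}; doing this without losing the sharp power $\lambda^{-d/\alpha}$ requires a careful covering argument on the box together with the compact support and $L^2$-positivity of $W$ (assumption \textbf{(W)}), and the continuity of $F_q$ near $0$ from \textbf{(Q)} is what lets one trade the exact threshold for a slightly shifted one $D_0\lambda \rightsquigarrow \widetilde D_0\lambda$ without changing the asymptotics — this is exactly the role of the auxiliary constants $D,\widetilde D$ in the statement.
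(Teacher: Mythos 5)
Your plan is the classical Kirsch--Simon route (box eigenvalue, Dirichlet--Neumann bracketing, then a direct probability estimate on $\mathbb{Q}[\lambda_1^{Q_L}\le \lambda]$), while the paper deliberately avoids bracketing and works instead through the Laplace transform $L(t)$. Concretely, the paper estimates $L(t)$ from above by the trace of the Feynman--Kac semigroup on a torus $\mathcal T_M$ with the Sznitman-periodized potential, applies Temple's inequality with the constant test function to obtain a lower bound on the torus ground state energy on the ``good'' event $\mathcal A_{M,\delta}$ where a $\delta$-fraction of the $q_{\mathbf i}$ exceed $D_0/M^\alpha$, controls $\mathbb{Q}[\mathcal A_{M,\delta}^c]$ by a Bernstein bound for binomials, optimizes $M=M(t)$, and then proves a \emph{new} Tauberian theorem (their Theorem~\ref{th:tauberian}) to pass from $L(t)$ to $\ell(\lambda)$, because the rate $t^{d/(d+\alpha)}(h(t))^{\alpha/(d+\alpha)}$ has a correction factor not covered by the standard exponential Tauberian theorem. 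Your physical mechanism for both bounds is right, and your ``testing against the free ground state'' is indeed what Temple's inequality implements, so the two approaches are conceptually close at the level of the eigenvalue analysis.

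Where your proposal has a genuine gap is exactly the point you flag but do not resolve: how to realize the ``Neumann side'' (the upper bound on $\ell$) for a nonlocal $\Phi(-\Delta)$. For the Laplacian, Neumann bracketing supplies the needed inequality $\ell(\lambda)\le c\,\mathbb{Q}[\lambda_1^{Q_L,\mathrm{Neu}}\le\lambda]$; for $\Phi(-\Delta)$ with a nonzero L\'evy measure there is no clean localization inequality of this type, and waving at ``the Feynman--Kac / Dirichlet form description rather than boundary conditions'' does not produce one. The paper's resolution --- periodize the potential over $\mathcal T_M$, observe monotonicity of the exponential functional under periodization (their Section~\ref{sec:trace}, using \cite[Lemma~5.1]{bib:KK-KPP-alloy-stable}), and bound $L(t)$ by a torus trace --- is precisely the nontrivial replacement you would need to construct, and it is not part of your plan. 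Relatedly, your plan bounds $\ell(\lambda)$ by $\mathbb{Q}[\lambda_1^{Q_L}\le\lambda]$ alone, but converting that into a bound on the eigenvalue-\emph{counting} measure near zero, uniformly as $\lambda\searrow 0$ and for a nonlocal generator, is itself the step that the trace/Laplace-transform formulation is designed to handle cleanly. Finally, you omit the large-deviation step: even once Temple's inequality tells you that $\lambda_1 \le \lambda$ forces a macroscopic fraction of the $q_{\mathbf i}$ to be $\le D_0\lambda$, you still need a quantitative tail bound for the binomial count (the paper's Lemma~\ref{lem:bernoulli}) to get the sharp exponent $\lambda^{-d/\alpha}\,g(\widetilde D/\lambda)$, together with an optimization over the box size that makes the two resulting error terms balance. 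Your lower-bound argument, by contrast, is essentially the paper's Theorem~\ref{th:lower} and would go through once the constants are tracked.
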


Interestingly, the rate for $\log \ell(\lambda)$ can be fully factorized: it depends on the kinetic term $\Phi(-\Delta)$ through $\lambda^{d/\alpha}$ and, separately, on the potential $V^{\omega}$ -- through the function $g(x)$ describing the common distribution of  lattice random variables. To the best of our knowledge, such a general description of the behavior of the IDS at the bottom of the spectrum, involving the dependence of the lattice configuration, was not known before. This is illustrated by Example \ref{ex:distr} below.

Moreover, for functions $g$ with sufficiently regular growth at infinity (see Example \ref{ex:distr} below), the constants   $D,\widetilde D$ in Theorem \ref{th:IDS-asymp} can be eliminated and we arrive at a single statement:
\[-C \leq\liminf_{\lambda \searrow 0} \frac{\lambda^{d/\alpha}}{\log\frac{1}{F_q(\lambda)}}\,\log\ell(\lambda) \leq \limsup_{\lambda \searrow 0} \frac{\lambda^{d/\alpha}}{\log\frac{1}{F_q(\lambda)}}\,\log\ell(\lambda) \leq  -\widetilde C.\]
In particular, if the distribution of the lattice random variables has an atom at zero, i.e.\linebreak $F_q(0)>0$, then the statement simplifies to
\[-C \log\frac{1}{F_q(0)} \leq\liminf_{\lambda \searrow 0} {\lambda^{d/\alpha}}{\log \ell(\lambda)}\leq \limsup_{\lambda \searrow 0} {\lambda^{d/\alpha}}\log \ell(\lambda) \leq -\widetilde C \log\frac{1}{F_q(0)},\]
i.e. we obtain the rate known from the continuous Poisson model (cf. \cite{bib:Oku}).

It is useful to give the following interpretation of our main result (cf. \cite[Remark 3.6(1)]{bib:Szn1} and the comments following \cite[Theorem 1.1]{bib:KK-KPP-alloy-stable}).

\begin{remark} {\textbf{(Interpretation)}}
Suppose $B(r_{\lambda})$ is a  ball with radius $r_{\lambda} \asymp \lambda^{-1/\alpha}$.  Clearly, $|B(r_{\lambda})| \asymp \lambda^{-d/\alpha}$, and due to the condition \eqref{eq:assum-phi-close-to-0} and \cite[Theorem 4.4]{bib:CS} the principal Dirichlet eigenvalue of the operator $\Phi(-\Delta)$ constrained to $B(r_{\lambda})$ is proportional to $\lambda$ as $\lambda \searrow 0$. Then, assuming that $D = \widetilde D = 1$ (this is often the case -- see Example \ref{ex:distr} below) our main result from Theorem \ref{th:IDS-asymp} can be written as
$$
F_q(D_0\lambda)^{C |B(r_{\lambda})|} \leq \ell(\lambda) \leq F_q(D_0\lambda)^{\widetilde C |B(r_{\lambda})|}, \quad \text{for} \quad \lambda \searrow 0.
$$
Since $F_q(D_0\lambda)$ is the probability of the $q_{{\bf i}}$ being not larger than $D_0\lambda$ at any given lattice point, $\ell(\lambda)$ behaves roughly as the probability  that in the ball with ground state eigenvalue comparable to $\lambda$, all random variables $q_{{\bf i}}$'s are smaller than $D_0 \lambda$.
\end{remark}

A direct consequence of Theorem \ref{th:IDS-asymp} is the following 'loglog statement'  which generalizes \eqref{eq:logloglimit}  (see Corollary \ref{coro:loglog}): if $\lim_{x \to \infty} \frac{\log g(x)}{\log x}$ exists, then
$$
\lim_{\lambda \searrow 0}\frac{\log|\log\ell(\lambda)|}{\log \lambda}= -\frac{d}{\alpha}- \lim_{x\to \infty}\frac{\log g(x)}{\log x}.
$$
\smallskip

Our main result is illustrated  by  four different examples of  distribution functions $F_q,$
yielding distinct asymptotics of the IDS near zero (more detailed  discussion of these examples can be found in Section \ref{sec:examples}).
Roughly speaking, the faster the decay of $F_q$ at zero, the faster the decay of $\ell(\lambda)$ at zero as well.

\begin{example} \label{ex:distr} Suppose that the assumptions {\bf (B)} and {\bf (W)} hold. Then there exist constants \linebreak $C, \widetilde C >0$ such that:
\begin{itemize}
\item[(1)] \textbf{atom at zero:} if there exists $\kappa_0 >0$ such that $F_q$ is continuous on $[0,\kappa_0]$ and $F_q(0) > 0$, then
$$
-C\leq \liminf_{\lambda \searrow 0}\lambda^{d/\alpha}\log \ell(\lambda)\leq \limsup_{\lambda \searrow 0}\lambda^{d/\alpha}\log \ell(\lambda)\leq -\widetilde C\quad\mbox{ and }\quad \lim_{\lambda \searrow 0}\frac{\log|\log \ell(\lambda)|}{\log \lambda } =  - \frac{d}{\alpha}.
$$
\item[(2)] \textbf{polynomial decay at zero:} if there exists $\kappa_0 >0$ such that $F_q$ is continuous on $[0,\kappa_0]$ and
$c_1\kappa^{\gamma_1}\leq F_q(\kappa)\leq c_2\kappa^{\gamma_2}$, $\kappa \in [0,\kappa_0]$,
for some $\gamma_1, \gamma_2, c_1, c_2>0$, then
$$
-C\leq \liminf_{\lambda \searrow 0}\frac{\lambda^{d/\alpha}}{\log \lambda}\log \ell(\lambda)\leq \limsup_{\lambda \searrow 0}\frac{\lambda^{d/\alpha}}{\log \lambda}\log \ell(\lambda)\leq -\widetilde C\quad\mbox{ and }\quad \lim_{\lambda \searrow 0}\frac{\log|\log \ell(\lambda)|}{\log \lambda } =  - \frac{d}{\alpha}.
$$
\item[(3)] \textbf{exponential decay at zero:} if $F_q(\kappa) = {\rm e}^{-\frac{1}{\kappa^\gamma}}$, $\kappa>0,$ for some $\gamma >0$, then
$$-C \leq \liminf_{\lambda \searrow 0} \lambda^{\frac{d}{\alpha}+\gamma}\log \ell(\lambda) \leq \limsup_{\lambda \searrow 0} \lambda^{\frac{d}{\alpha}+\gamma}\log \ell(\lambda) \leq -\widetilde C \quad \mbox{and} \quad
\lim_{\lambda \searrow 0} \frac{\log |\log\ell(\lambda)|}{\log \lambda}= -\frac{d}{\alpha}-\gamma.
$$
\item[(4)] \textbf{double-exponential decay at zero:} if $F_q(\kappa)= {\rm e}^{1-{\rm e}^{
\frac{1}{\kappa}}}$, $\kappa>0$, then there exist constants $D_1,D_2 >0$ such that
$$
-C\leq \liminf_{\lambda \searrow 0} \lambda^{d/\alpha}{\rm e}^{ -\frac{D_1}{\lambda}}\log \ell(\lambda) \qquad \mbox{and} \qquad
\limsup_{\lambda \searrow 0} \lambda^{d/\alpha}{\rm e}^{\frac{-D_2}{\lambda}}\log \ell(\lambda)\leq - \widetilde C
$$
and
$$
C\leq\liminf_{\lambda \searrow 0} \lambda \log|\log \ell(\lambda)| \leq \limsup_{\lambda \searrow 0} \lambda \log|\log \ell(\lambda)|\leq \widetilde C.
$$
We call this behavior {\em the super-Lifchitz tail}.
\end{itemize}
\end{example}
As  Example \ref{ex:distr} (3)-(4) above indicates, the contribution coming from the lattice configuration might  not be just the correction term (i.e.\ a lower order term). Actually, its
 order may be polynomial  or even faster, so this term may be the leading one. Such an effect was not observed before. Note also that the distribution functions $F_q$ as in Example \ref{ex:distr} (1)-(2) satisfy the assumptions of the paper by Kirsch and Simon \cite[Theorem 1]{bib:KS}, who established the Lifshitz singularity in the `loglog' form \eqref{eq:logloglimit} for the random Schr\"odinger operators based on Laplacian (i.e.\ for $\Phi(\lambda)=\lambda$). For the case (1),  a version of this results  was first obtained by Kirsch and Martinelli \cite[Theorem 7]{bib:KM2}. Our present work extends and improves these results to the case of compactly supported single-site potentials (see Section \ref{ex:log-rate} for a broader discussion).

Our approach in the present paper is based on a combination of analytic and probabilistic methods.
To make the paper easier accessible to the analytic community,
in Section \ref{sec:prel} we have included a detailed description of subordinate processes  and their evolution semigroups,  Kato classes, Schr\"{o}dinger operators, Feynman-Kac formula etc. The reader familiar with those topics may just give a cursory look at this section and start the lecture from Section \ref{sec:upper}, maybe coming back to the previous section for the notation.

Our argument is constructed as follows. We first find  proper estimates for the Laplace transform $L(t)$ of the IDS (Sections \ref{sec:upper}, \ref{sec:lower}), and then we transform them to the bounds on the IDS itself (Section \ref{sec:tauber}). The proof of the upper bound for $L(t)$ (Theorem \ref{th:upper-short}) is the most demanding part of this work. It consists of several steps which are presented in Sections \ref{sec:trace}, \ref {sec:temple} and \ref{sec:conclusion}, respectively. In the first step, using the stochastic Feynman-Kac representation and monotonicity, we estimate $L(t)$ by the trace of the evolution semigroup of the Schr\"odinger operator $\Phi(\Delta_M) + V^{\omega}_M$, where $\Delta_M$ is the Laplace operator on a torus of given size $M \geq M_0$, and $V^{\omega}_M$ is an alloy-type potential defined for the periodized lattice configuration. With this preparation, in the next step, we are able to apply in our framework a beautiful idea we learned from the papers by Simon \cite{bib:Sim} and Kirsch and Simon \cite{bib:KS}, which is based on an application of the Temple's inequality (Proposition \ref{prop:temple}). In \cite[Proposition 3]{bib:KS} the authors proved that if the ground state eigenvalue of a random Schr\"odinger operator constrained to a box of size $L$ (with Neumann boundary conditions) is smaller than a given number $\lambda>0$, then the number of those lattice random variables in this box which are less than $4\lambda$ is larger than $L^d/2$. In general, our approach in the present paper is different from that in the cited papers (we estimate the Laplace transform directly and we do not employ  the Dirichlet-Neumann bracketing), but we came up with a version of this implication suitable for our setting (Lemma \ref{lem:lambda-on-a-delta}). With a given control level $\delta>0$ and fixed $M,$  we divide all lattice configurations into two disjoint subsets: $\mathcal A_{M,\delta}$ and $\mathcal A_{M,\delta}^c$. In the first set, there is a lot of built-in randomness, which permits to find a proper lower-scaling bound for the ground state eigenvalue of the operator $\Phi(\Delta_M) + V^{\omega}_M$ with $\omega \in \mathcal A_{M,\delta}$.
 The probability of $ \mathcal A_{M,\delta}^c$   is estimated by means of a Bernstein-type estimate for the binomial distribution (Lemma \ref{lem:bernoulli}). We then need to balance the two -- by optimization we find $M=M(t)$ for which both summands are of the same order. This leads us directly to the identification of the correct rate function for $\log L(t)$.

The proof of the lower bound for the Laplace transform $L(t)$ (Theorem \ref{th:lower} of Section \ref{sec:lower}) is more direct. We restrict the integration to the  set of special lattice configurations for  which we can reduce the problem to a careful analysis of the evolution semigroups associated to non-random Schr\"odinger operators $\Phi(-\Delta) + \frac{C}{M^{\alpha}}\sum_{{\mathbf i\in [-M,2M)^d}} W(x-\mathbf i)$ constrained to the box of size $M$ (with Dirichlet conditions).

As the last step, in Section \ref{sec:tauber} we transform the
 statements concerning the asymptotical behavior of $L(t)$  at infinity into statements  for $\ell(\lambda)$ near zero. This is done by an application of a Tauberian-type theorem.
Let us emphasize that the  the asymptotic rates for $\log L(t)$ identified in our Theorems \ref{th:upper-short} and \ref{th:lower} are typically more complicated than $t^{\gamma}$, $\gamma \in (0,1)$  (which are the rates e.g. in the Poissonian case),
as they may comprise also lower order terms (see the more detailed discussion of the specific cases in Section \ref{sec:examples}). This causes additional difficulties as the Tauberian theorems available in the literature did not cover such a case.  Therefore, we had to prove a more general Tauberian theorem which is specialized to work in our present framework (Theorem \ref{eq:taub-lower-assump}).

\subsection*{Convention concerning constants} There are four structural constants of this
paper - $C_1,$ $C_2$ of Assumption {\bf (B)}, $M_0$ of assumption {\bf (W)}, and $D_0$ of
formula \eqref{eq:c-zero}. Their values are kept fixed throughout the paper.
The value of other roman-type constants  (both lower- and upper-case) is not relevant and can change at each appearance.
When we need to keep track of the dependence between technical constants, we number them inside the proofs consecutively as $c_1,c_2, \ldots.$

\section{Bernstein functions and  corresponding Schr\"odinger operators}\label{sec:prel}

As indicated in the Introduction, the approach of this paper is based on a combination of probabilistic and analytic methods. The Schr\"odinger semigroups we consider are represented by
the Feynman--Kac formula with respect to L\'evy processes that are obtained via  subordination (random time change) of the standard Brownian motion in $\R^d$. We start our preparation by giving the necessary preliminaries on Bernstein functions, related stochastic processes, and unbounded operators, then we discuss the class of random potentials studied in this paper. Finally, we introduce the corresponding Schr\"odinger operators and discuss their properties.

\subsection{Bernstein functions and subordinators}

A function $\Phi:(0,\infty) \to [0,\infty)$ is called \emph{completely monotone} if it is smooth and satisfies $(-1)^n \Phi^{(n)}(x) \geq 0$, for every $x >0$ and $n \in \Z_+$. We call $\Phi$ a \emph{Bernstein function} if it is a nonnegative and smooth function with completely monotone derivative. Our standard reference to Bernstein functions,  corresponding operators, and stochastic processes is the monograph \cite{bib:SSV}.

It is known that every Bernstein function $\Phi$ admits the representation
\begin{align} \label{eq:def_Phi}
\Phi(\lambda) = a+b \lambda + \int_{(0,\infty)}(1-{\rm e}^{-\lambda u}) \rho({\rm d}u),
\end{align}
where $a, b \geq 0$ and $\rho$ is a L\'evy measure,  i.e. a nonnegative Radon measure on $(0,\infty)$ such that $\int_{(0,\infty)} (u \wedge 1)\rho({\rm d}u) < \infty$.
A Bernstein function is said to be a \emph{complete Bernstein function} if its L\'evy measure has a completely monotone density with respect to the Lebesgue measure.

Bernstein functions $\Phi$ with $\lim_{\lambda \searrow 0} \Phi(\lambda) = 0$ (i.e.\ $a = 0$) are in one-to-one correspondence with subordinators.
The stochastic process $S=(S_t)_{t \geq 0}$ on a probability space $(\Omega_0, \mathcal{F}, \mathcal{P})$ is called a \emph{subordinator} if it is a nondecreasing  L\'evy process in $\R_+$, i.e.\ a process with c\`adl\`ag paths (right continuous with left limits finite)  starting from $0$, with stationary and independent increments. The laws of $S,$ given by  $\eta_t({\rm d}u):=\mathcal{P}(S_t \in {\rm d}u),$ $t\geq 0,$ form a convolution semigroup of probability measures on $[0,\infty)$ which is uniquely determined by the Laplace transform
\begin{align}\label{eq:laplace}
\int_{[0,\infty)} {\rm e}^{-\lambda u} \eta_t({\rm d}u)  = {\rm e}^{-t \Phi(\lambda)}, \quad \lambda>0,
\end{align}
where the Laplace exponent $\Phi$ is a Bernstein function such that $\lim_{\lambda \searrow 0} \Phi(\lambda) = 0$. The number $b$ and the measure $\rho$ are called the drift term and the L\'evy measure of the subordinator $S,$ respectively.

Under \eqref{eq:basic_ass_BF} we have $\lim_{\lambda \to \infty} \Phi(\lambda) = \infty$, and therefore either $b>0$ or $\int_{(0,\infty)} \rho({\rm d}u) = \infty$. We also easily see from \eqref{eq:laplace} that in this case $\eta_t(\left\{0\right\})=0$, for every $t>0$. The following lemma will be an important tool below. It is based on  standard calculations, but we include here a short proof for the reader's convenience.

\begin{lemma} \label{lem:basic_subord}
For every $\gamma>0$ there is a constant $C=C(\gamma)$ such that
$$
\int_{[0,\infty)} u^{-\gamma} \eta_t({\rm d}u) = C \int_0^{\infty} {\rm e}^{-t \Phi(\lambda^{1/\gamma})} {\rm d} \lambda, \quad t>0.
$$
 Under the assumption \textbf{\textup{(B)}}, for every $t_0>0$ there exists a constant $\widetilde C=\widetilde C(t_0)$ such that
$$
\int_{(0,\infty)} u^{-\gamma} \eta_t({\rm d}u) \leq \widetilde C t^{-2\gamma/\alpha}, \quad t \geq t_0.
$$
In particular, for every $t_0>0$,
$$
\sup_{t \geq t_0} \int_{(0,\infty)} u^{-\gamma} \eta_t({\rm d}u) < \infty.
$$
\end{lemma}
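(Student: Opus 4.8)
The first identity is a standard computation with the layer-cake (Fubini) representation of the negative moment. The plan is to write, for $u>0$ and $\gamma>0$,
\[
u^{-\gamma} = \frac{1}{\Gamma(\gamma)} \int_0^\infty s^{\gamma-1} {\rm e}^{-su}\, {\rm d}s,
\]
integrate against $\eta_t$, swap the order of integration (everything is nonnegative, so Tonelli applies), and use the Laplace transform formula \eqref{eq:laplace} to get $\int_{[0,\infty)} {\rm e}^{-su}\eta_t({\rm d}u) = {\rm e}^{-t\Phi(s)}$. This yields $\int u^{-\gamma}\eta_t({\rm d}u) = \frac{1}{\Gamma(\gamma)}\int_0^\infty s^{\gamma-1}{\rm e}^{-t\Phi(s)}{\rm d}s$, and the substitution $\lambda = s^\gamma$ (so $s=\lambda^{1/\gamma}$, ${\rm d}s = \frac{1}{\gamma}\lambda^{1/\gamma-1}{\rm d}\lambda$) turns this into $C(\gamma)\int_0^\infty {\rm e}^{-t\Phi(\lambda^{1/\gamma})}{\rm d}\lambda$ with $C(\gamma) = \frac{1}{\gamma\Gamma(\gamma)}$. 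Note that under \textbf{(B)}, $\eta_t(\{0\})=0$ for $t>0$, so the integral over $[0,\infty)$ equals the integral over $(0,\infty)$; I would remark on this so the two displayed forms are consistent.

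For the second bound I would split the $\lambda$-integral at $\lambda_0$ from assumption \textbf{(B)} and use the two regimes of $\Phi$ separately. On $(0,\lambda_0)$ we have $\Phi(\lambda^{1/\gamma}) \geq C_1 \lambda^{\alpha/(2\gamma)}$ by \eqref{eq:assum-phi-close-to-0}, so $\int_0^{\lambda_0} {\rm e}^{-t\Phi(\lambda^{1/\gamma})}{\rm d}\lambda \leq \int_0^\infty {\rm e}^{-C_1 t \lambda^{\alpha/(2\gamma)}}{\rm d}\lambda$, and a change of variables $v = C_1 t\lambda^{\alpha/(2\gamma)}$ shows this is a constant times $(C_1 t)^{-2\gamma/\alpha}$, i.e.\ of order $t^{-2\gamma/\alpha}$. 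For the tail $\int_{\lambda_0}^\infty {\rm e}^{-t\Phi(\lambda^{1/\gamma})}{\rm d}\lambda$ I would use \eqref{eq:basic_ass_BF}: since $\Phi(\mu)/\log\mu \to \infty$, for any fixed exponent (say $\beta = 2\gamma/\alpha + 1$) there is $\mu_1$ with $\Phi(\mu) \geq \beta\log\mu$ for $\mu \geq \mu_1$, hence ${\rm e}^{-t\Phi(\mu)} \leq \mu^{-\beta t}$ for $\mu\geq\mu_1$; converting back to $\lambda=\mu^\gamma$ and using $t \geq t_0$ gives a bound ${\rm e}^{-t\Phi(\lambda^{1/\gamma})} \leq \lambda^{-\beta t_0/\gamma}$ on the far tail, with $\beta t_0/\gamma > 1$ once $\beta$ is chosen large enough (replace $\beta$ by $\max(\beta, 2\gamma/(t_0))$), so the tail integral is finite and bounded uniformly in $t\geq t_0$; a crude bound like ${\rm e}^{-t\Phi(\lambda^{1/\gamma})} \leq {\rm e}^{-t_0 \Phi(\lambda^{1/\gamma})}$ on the intermediate piece $[\lambda_0,\lambda_1]$ with $\lambda_1 = \mu_1^\gamma$ handles the bounded part. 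Absorbing the bounded tail contribution into the constant (it is $\leq \widetilde C' \leq \widetilde C' t_0^{2\gamma/\alpha} t^{-2\gamma/\alpha} \cdot (t/t_0)^{2\gamma/\alpha}$... more simply: since it is bounded and $t^{-2\gamma/\alpha} \geq$ something only for bounded $t$, one just notes $t^{-2\gamma/\alpha}$ is bounded below on any compact $t$-range and unbounded-above as... ) — cleanest is: the tail integral is bounded by a constant $\widetilde C_1$ independent of $t\geq t_0$, and since we only claim an upper bound of the form $\widetilde C t^{-2\gamma/\alpha}$ valid for $t \geq t_0$, I bound $\widetilde C_1 = \widetilde C_1 t_0^{2\gamma/\alpha} \cdot t_0^{-2\gamma/\alpha} \leq \widetilde C_1 t_0^{2\gamma/\alpha} t^{-2\gamma/\alpha}$ is false for large $t$; instead I simply write the main term already dominates for large $t$ and for $t \in [t_0, \text{anything}]$...

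Let me restate that last point more carefully in the plan: the two pieces combine to give $\int_0^\infty {\rm e}^{-t\Phi(\lambda^{1/\gamma})}{\rm d}\lambda \leq c\, t^{-2\gamma/\alpha} + \widetilde C_1$ for $t\geq t_0$; to absorb the additive constant into a single factor $t^{-2\gamma/\alpha}$ I choose the exponent $\beta$ in the tail estimate large enough that the tail integral is in fact $\leq \widetilde C_1 t^{-2\gamma/\alpha}$ for all $t \geq t_0$ — this works because ${\rm e}^{-t\Phi(\lambda^{1/\gamma})} \leq {\rm e}^{-(t-t_0)\Phi(\lambda^{1/\gamma})}{\rm e}^{-t_0\Phi(\lambda^{1/\gamma})}$ and on $[\lambda_0,\infty)$ one has $\Phi(\lambda^{1/\gamma}) \geq \Phi(\lambda_0^{1/\gamma}) =: \epsilon_0 > 0$, so the tail integral is $\leq {\rm e}^{-(t-t_0)\epsilon_0}\int_{\lambda_0}^\infty {\rm e}^{-t_0\Phi(\lambda^{1/\gamma})}{\rm d}\lambda$, which decays exponentially in $t$ and is therefore $O(t^{-2\gamma/\alpha})$. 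That gives the clean bound $\widetilde C t^{-2\gamma/\alpha}$ for $t\geq t_0$, and the final "in particular" statement is immediate since $t^{-2\gamma/\alpha}$ is bounded on $[t_0,\infty)$. The only mild subtlety — the "hard part," such as it is — is making sure the additive bounded remainder is genuinely absorbed into the stated power of $t$ rather than left as a separate constant; the exponential-decay observation above settles it, and apart from that the lemma is a routine exercise in the Laplace transform of subordinators.
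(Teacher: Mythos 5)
Your proposal is correct and follows essentially the same strategy as the paper: the identity is a Fubini--Tonelli computation giving the same constant $C=1/(\gamma\Gamma(\gamma))$ (the paper's $C=\bigl(\int_0^\infty {\rm e}^{-\vartheta^{1/\gamma}}\,{\rm d}\vartheta\bigr)^{-1}$ equals this after substitution), and for the estimate both arguments split the $\lambda$-integral at a fixed threshold, bound the near-origin piece by $O(t^{-2\gamma/\alpha})$ via the power lower bound \eqref{eq:assum-phi-close-to-0}, and handle the tail by peeling off a factor ${\rm e}^{-(t-t_0)\Phi(\cdot)}$ so that the remaining integral (finite by \eqref{eq:basic_ass_BF}) is multiplied by something exponentially small in $t$, hence dominated by $t^{-2\gamma/\alpha}$. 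The device you settle on in your final paragraph — extracting ${\rm e}^{-(t-t_0)\Phi(\lambda_0^{1/\gamma})}$ rather than leaving an additive constant — is precisely what the paper does with its $\widetilde\lambda_0$.
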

\begin{proof}
First note that by \eqref{eq:laplace} we have ${\rm e}^{-t \Phi(\lambda^{1/\gamma})}=\int_0^{\infty} {\rm e}^{-(\lambda u^{\gamma})^{1/\gamma}} \eta_t({\rm d}u)$, $\lambda, t>0$.
Then, by Fubini-Tonelli and the substitution $\vartheta = u^{\gamma}\lambda$, we get
$$
\int_0^{\infty} {\rm e}^{-t \Phi(\lambda^{1/\gamma})} {\rm d} \lambda =\int_0^{\infty} \int_0^{\infty} {\rm e}^{-(\lambda u^{\gamma})^{1/\gamma}} {\rm d}\lambda \, \eta_t({\rm d}u) = \int_0^{\infty} {\rm e}^{-\vartheta^{1/\gamma}} d\vartheta \, \int_0^{\infty} u^{-\gamma} \eta_t({\rm d}u),
$$
which gives the first equality with $C=\big(\int_0^{\infty} {\rm e}^{-\vartheta^{1/\gamma}} {\rm d}\vartheta\big)^{-1}$.
Moreover, for every fixed $t_0>0$,
$$
\sup_{t \geq t_0} \int_0^{\infty} u^{-\gamma} \eta_t({\rm d}u) = C \sup_{t \geq t_0} \int_0^{\infty} {\rm e}^{-t \Phi(\lambda^{1/\gamma})} {\rm d} \lambda \leq C \int_0^{\infty} {\rm e}^{-t_0 \Phi(\lambda^{1/\gamma})} {\rm d} \lambda.
$$
Assume now \textbf{\textup{(B)}} and fix $t_0>0$. Observe that by \eqref{eq:basic_ass_BF} there exists $\widetilde \lambda_0>\lambda_0$ such that \linebreak $t_0 \Phi(\lambda^{1/\gamma}) \geq 2 \log \lambda$, for $\lambda \geq \widetilde \lambda_0$. Also, by decreasing the constant $C_1>0$ if needed, we may assume that the lower bound in \eqref{eq:assum-phi-close-to-0} holds with $\lambda_0$ replaced with $\widetilde \lambda_0$ (this is possible due to monotonicity and strict positivity of $\Phi$ on $(0,\infty)$). With this in mind,
\begin{align*}
\int_0^{\infty} u^{-\gamma} \eta_t({\rm d}u) = C \int_0^{\infty} {\rm e}^{-t \Phi(\lambda^{1/\gamma})} {\rm d} \lambda & \leq C \left(\int_0^{\widetilde \lambda_0} {\rm e}^{-t \Phi(\lambda^{1/\gamma})} {\rm d} \lambda  +  {\rm e}^{-(t-t_0) \Phi(\widetilde \lambda_0^{1/\gamma})} \int_{\widetilde \lambda_0}^{\infty} {\rm e}^{-t_0 \Phi(\lambda^{1/\gamma})} {\rm d} \lambda \right)\\
& \leq C \left(\int_0^{\widetilde \lambda_0} {\rm e}^{-t A_1 \lambda^{\alpha/(2\gamma)}} {\rm d} \lambda  +  {\rm e}^{-(t-t_0) \Phi(\widetilde \lambda_0^{1/\gamma})} \int_{\widetilde \lambda_0}^{\infty} {\rm e}^{-2 \log \lambda } {\rm d} \lambda \right).
\end{align*}
Using the substitution $\vartheta = t^{2\gamma/\alpha} \lambda$ for the first integral and the fact that there exists a constant $c = c(t_0) >0$ such that $e^{-(t-t_0) \Phi(\widetilde \lambda_0^{1/\gamma} )} \leq c t^{-2\gamma/\alpha}$,  for $t \geq t_0$, we finally get
\begin{align*}
\int_0^{\infty} u^{-\gamma} \eta_t({\rm d}u) \leq C \left(\int_0^{\infty} {\rm e}^{-A_1 \vartheta^{\alpha/(2\gamma)}} {\rm d} \vartheta  + \frac{c}{\widetilde \lambda_0} \right) t^{-2\gamma/\alpha} = \widetilde C t^{-2\gamma/\alpha}.
\end{align*}
This completes the proof.
\end{proof}

\subsection{ Operators $\Phi(-\Delta)$ and subordinate Brownian motions} \label{sec:bernstein}

Denote by $\big\{ G_t: t \geq 0\big\}$ the classical heat semigroup acting on $L^2(\R^d)$, i.e.
$$
G_t f(x) = \int_{\R^d} g_t(x-y) f(y) {\rm d}y, \quad f \in L^2(\R^d), \ t>0,
$$
where $g_t(x)= (4 \pi t)^{-d/2} e^{-|x|^2/4t}$ is the Gauss-Weierstrass kernel. We have $G_t = e^{t \Delta}$, where $\Delta$ is the classical Laplace operator. Recall that it is an unbounded, non-positive definite, self-adjoint operator in $L^2(\R^d)$.
On the probabilistic side, $\big\{ G_t: t \geq 0\big\}$ serves as the transition semigroup of
the standard Brownian motion $Z=(Z_t)_{t \geq 0}$ in $\R^d$, running at twice the usual speed.

Suppose now that $\Phi$ is a Bernstein function such that $\lim_{\lambda \searrow 0} \Phi(\lambda) = 0$ and let $\big\{\eta_t: t \geq 0\big\}$ be the convolution semigroup of measures determined by \eqref{eq:laplace}. With this, we can define
$$
P_t f(x) := \int_{[0,\infty)} G_u f(x) \eta_t({\rm d}u), \quad f  \in L^2(\R^d), \ t \geq 0.
$$
One can check that $P_t$ form a strongly continuous semigroup of bounded self-adjoint operators in $L^2(\R^d)$ which is referred to as  the \emph{subordinate heat semigroup}; under the assumption \eqref{eq:basic_ass_BF} (giving $\eta_t(\left\{0\right\})=0$, $t>0$) all the $P_t$'s, $t>0$, are  integral operators with kernels $p_t(x,y):=p_t(y-x)$, where
$$
p_t(x) = \int_0^{\infty} g_u(x) \eta_t({\rm d}u), \quad  t>0.
$$

Under the full assumption \textbf{\textup{(B)}}, by Lemma \ref{lem:basic_subord}, we obtain that for every $t_0>0$ there exists $C=C(t_0)$ such that
\begin{align}\label{eq:diag_on_pt}
p_t(x) \leq (4 \pi)^{-d/2} \int_0^{\infty} u^{-d/2} \eta_t({\rm d}u) \leq C t^{-d/\alpha}, \quad x \in \R^d, \ \ t \geq t_0.
\end{align}
It is known that $P_t = {\rm e}^{-t\Phi(-\Delta)}$, where $\Phi(-\Delta)$ is the Fourier multiplier with symbol $\Phi(|\xi|^2)$, i.e. the operator
$$
\Phi(-\Delta) f = \mathcal{F}^{-1} \big(\Phi(|\cdot|^2) \mathcal{F} f(\cdot) \big), \quad f \in \mathcal D(\Phi(-\Delta))
$$
with the domain
\[\mathcal D(\Phi(-\Delta)) =\{f\in L^2(\R^d): \Phi(|\xi|^2) \mathcal{F} f(\xi)\in L^2(\R^d)\};\]
$\Phi(-\Delta)$ is an unbounded, non-negative definite,
self-adjoint operator on $L^2(\R^d)$ and  $\mathcal D(\Phi(-\Delta))$ contains $\mathcal D(-\Delta)$. The quadratic form associated with this operator is given by
$$
\mathcal E(f,f) = \int_{\R^d} \Phi(|\xi|^2) (\mathcal{F} f)^2(\xi) {\rm d}\xi, \quad f \in \mathcal D(\mathcal E),
$$
where $f \in \mathcal D(\mathcal E)$ if and only  both of $f(\xi)$ and $\sqrt{\Phi(|\xi|^2)} \mathcal{F} f(\xi)$ are in $L^2(\R^d)$.
Note that the choice $\Phi(\lambda) = b \lambda$, $b>0$, leads to the only pure \emph{local} operator in the class we consider, i.e. the operator $-b \Delta$. Whenever the L\'evy measure $\rho$ in \eqref{eq:def_Phi} is non-zero, the resulting operator  $\Phi(-\Delta)$  is a  \emph{non-local}  integral operator (for $b=0$) or an integro-differential operator (for $b>0$).

The semigroup $\big\{P_t: t \geq 0\big\}$ is the transition semigroup (and $-\Phi(-\Delta)$ is the generator) of a Markov process $X = (X_t)_{t \geq 0}$ which is  determined   by
$$
X_t = Z_{S_t}, \quad t \geq 0.
$$
Such a process is obtained by a random time change  of the Brownian motion $Z$ -- this procedure is called the \emph{subordination}. A new, random, clock of the process is given by the subordinator $S$ (we always assume that $Z$ and $S$ are independent). The process $X$ is referred to as the \emph{subordinate Brownian motion} in $\R^d$. It is an isotropic L\'evy process \cite{bib:Sat} with c\`adl\`ag paths whose L\'evy-Khintchine exponent is equal to $\Phi(|\xi|^2)$. More precisely, we have
$$
\ex_0 {\rm e}^{i \xi \cdot X_t} = {\rm e}^{-t \Phi(|\xi|^2)}, \quad \xi \in \R^d, \ \ t>0.
$$
By $\pr_x$ and $\ex_x$ we denote the probability measure and the corresponding expected value for the process $X$ starting from $x \in \R^d$.
We have $\pr^x(X_t \in A) = \int_A p_t(x,y) {\rm d}y$, $A \in \mathcal B(\R^d)$, $x \in \R^d$, $t>0$, i.e. the kernels $p_t(x,y)$ are transition probability densities of the process $X$. It is important that under \eqref{eq:basic_ass_BF} we also have $\lim_{|\xi| \to \infty} \Phi(|\xi|^2)/ \log |\xi| = \infty$, and it follows from \cite[Lemma 2.1]{bib:KSch2019} that
$(t,x) \mapsto p_t(x)$ is a continuous function on $(0,\infty) \times \R^d$.

Our Assumption {\bf (B)} is satisfied by a wide class of complete Bernstein functions (and corresponding subordinators). Below we discuss only several, the most popular examples. For further examples we refer the reader e.g. to the monograph \cite{bib:SSV}.

\begin{example} \label{ex:bernstein}{
\noindent

\begin{itemize}
\item[(1)] \emph{Pure drift.} Let $\Phi(\lambda)=b \lambda$, $ b>0$. As mentioned above, this leads to the only subordinate Brownian motion with continuous paths -- the \emph{Brownian motion} with speed $b$.
\item[(2)] \emph{$\alpha/2$-stable subordinators.} Let $\Phi(\lambda)=\lambda^{\alpha/2}$, $\alpha \in (0,2)$. The subordination via this subordinator leads to the pure jump \emph{isotropic $\alpha$-stable process}.
\smallskip
\item[(3)] \emph{Mixture of several purely jump stable subordinators.}  In this case,  $\Phi(\lambda)=\sum_{i=1}^n \lambda^{\alpha_i/2}$, $\alpha_i \in (0,2)$, $n \in \Z_+$.
\smallskip
\item[(4)] \emph{$\alpha/2$-stable subordinator with drift.} Let $\Phi(\lambda)=b\lambda + \lambda^{\alpha/2}$, $\alpha \in (0,2)$, $b>0$. Clearly, in this case, $\Phi(\lambda) \approx \lambda$ for $\lambda \to 0^{+}$, and $\Phi(\lambda) \approx \lambda^{\alpha/2}$ for $\lambda \to \infty$.
\item[(5)] \emph{Relativistic $\vartheta/2$-stable subordinator.} Let $\Phi(\lambda)=(\lambda+m^{2/\vartheta})^{\vartheta/2}-m$, $\vartheta \in (0,2)$, $m>0$. The subordination via such a subordinator leads to the so-called \emph{relativistic $\vartheta$-stable process}. Similarly as above, we have $\Phi(\lambda) \approx \lambda$ for $\lambda \to 0^{+}$, and $\Phi(\lambda) \approx \lambda^{\vartheta/2}$ for $\lambda \to \infty$.
\smallskip
\item[(6)] If $S$ is a subordinator with Laplace exponent $\Phi(\lambda)=\lambda^{\alpha/2}[\log(1+\lambda)]^{\beta/2}$, $\alpha \in (0,2)$, $\beta \in (-\alpha, 0)$ or $\beta \in (0,2-\alpha)$, then we see that  both the conditions \eqref{eq:basic_ass_BF} and \eqref{eq:assum-phi-close-to-0} hold as well.

\end{itemize}
}
\end{example}

\medskip

Next, we introduce the  bridge measures of the subordinate process that will be needed in our argument.
For fixed $t>0$ and $x,y \in \R^d,$ the bridge measure $\mathbf P_{x,y}^{t}$ is defined by the following property:
for any $0 <s<t$ and  $A\in \sigma(X_u: u \leq s),$
\begin{equation}\label{eq:bridge}
\pr_{x,y}^{t}[A] = \frac{1}{p_t(x,y)}\mathbf E_x[\mathbf 1_A p_{t-s}(X_s,y)]
\end{equation}
which is  then extended to $s=t$ by weak continuity. The bridge measures can be understood as the laws of the process that starts from $x$ and is conditioned to have $X_t=y,$ $\mathbf P_x-$almost surely.
For more detailed information on Markovian bridges we refer to \cite{bib:Cha-Uri}.

\subsection{The operators and the corresponding subordinate processes on tori} \label{sec:operators_on_tori}

Our argument in the present paper  mostly uses  subordinate semigroups and the related processes on the torus $\mathcal T_M,$ for any $M\in\mathbb Z_+.$  The torus $\mathcal T_M = \mathbb R^d/(M\mathbb Z_+)$ is understood as the box $[0,M)^d$ with reciprocal sides identified.  By $\pi_M$ we denote the canonical projection of $\R^d$ onto $\mathcal T_M$.

Let $\big\{ G^M_t: t \geq 0\big\}$ be the heat semigroup acting on $L^2(\mathcal T_M)$, i.e.
$$
G^M_t f(x) = \int_{\mathcal T_M} g^M_t(x,y) f(y) {\rm d}y, \quad f \in L^2(\mathcal T_M), \quad t>0,
$$
where
$$
g^M_t(x,y):= \sum_{y^{\prime} \in \pi^{-1}_M(y)} g_t(x,y^{\prime}) = \sum_{\mathbf i \in M\Z^d} g_t(x,y+\mathbf i), \ t>0
$$
 is the transition density of the Brownian motion on the torus $\mathcal T_M$ (in this formula, $g_t(x,y) := g_t(y-x)$ denotes the classical Gauss-Weierstrass kernel). One can check that $G^M_t$ form a strongly continuous semigroup of bounded operators in $L^2(\mathcal T_M)$ (the latter fact is an easy consequence of the symmetry $g^M_t(x,y)=g^M_t(y,x)$). The infinitesimal generator of this semigroup (denoted by $\Delta^M$) is an unbounded, self-adjoint operator on $L^2(\mathcal T_M)$.  It is important for our applications that the operators $G^M_t$ have  certain \emph{scaling property}: since $g_{a^2t}(ax) = g_t(x)$, $a>0$, we also have
\begin{align} \label{eq:scaling}
g^{kM}_{k^2t}(kx,ky) = g^{M}_t(x,y), \quad x, y \in \mathcal T_M, \ t>0, \ k \in \Z_+.
\end{align}
The \emph{subordinate heat semigroup on the torus} is defined in the same way as its free counterpart in $\R^d$. For a Bernstein function $\Phi$ such that $\lim_{\lambda \searrow 0} \Phi(\lambda) = 0$ and the convolution semigroup of measures $\big\{\eta_t: t \geq 0\big\}$ determined by \eqref{eq:laplace}, we let
$$
P^M_t f(x) := \int_{[0,\infty)} G^M_u f(x) \eta_t({\rm d}u), \quad f  \in L^2(\mathcal T_M), \ t \geq 0;
$$
$P_t^M$ form a strongly continuous semigroup of bounded self-adjoint operators in $L^2(\mathcal T_M)$. Under the assumption \eqref{eq:basic_ass_BF} all the  $P_t$'s, $t>0$, are integral operators with kernels given by
$$
p^M_t(x,y) = \int_0^{\infty} g^M_u(x,y) \eta_t({\rm d}u),  \quad t>0.
$$
Due to Fubini-Tonelli we have
\begin{align}\label{eq:pM_def}
p^M_t(x,y) = \sum_{y^{\prime} \in \pi^{-1}_M(y)}  \int_0^{\infty}  g_u(x,y^{\prime}) \eta_t({\rm d}u) =  \sum_{y^{\prime} \in \pi^{-1}_M(y)} p_t(x,y^{\prime}) =  \sum_{\mathbf i \in M\Z^d} p_t(x,y+\mathbf i).
\end{align}
We have $P^M_t = {\rm e}^{-t\Phi(-\Delta^M)}$, where  both the operators $\Phi(-\Delta^M)$ and ${\rm e}^{-t\Phi(-\Delta^M)}$ are understood through the spectral  representation of  unbounded self-adjoint operators.

As shown in Lemma \ref{lem:regularity} below, for every fixed $t>0$ both the kernels $g^M_t(x,y)$ and $p^M_t(x,y)$ are bounded functions on $\mathcal T_M \times \mathcal T_M$. Together with the fact that $|\mathcal T_M| = M^d < \infty$, this gives that the operators $G_t^M$ and $P_t^M$ are Hilbert-Schmidt on $L^2(\mathcal T_M)$. In consequence, all the operators considered in this section have purely discrete spectral decompositions. Indeed, for $M=1,2,...,$ the spectrum of the operator $-\Delta^M$ consists of a sequence of eigenvalues
$$
 0 \leq  \mu_1^M < \mu_2^M \leq \mu_3^M \leq ... \to \infty,
$$
each of finite multiplicity, and the corresponding eigenfunctions $\big\{\psi_k^M\big\}_{k=1}^{\infty}$ form a complete orthonormal system in $L^2(\mathcal T_M)$. We have
$$
-\Delta^M \psi^M_k = \mu_k^M \psi^M_k \qquad \text{and} \qquad G_t^M \psi^M_k = {\rm e}^{-t \mu_k^M} \psi^M_k, \ \  t>0, \ \ k =1,2,\ldots,
$$
and due to the conservativeness of the semigroup $\big\{G^M_t: t \geq 0\big\}$,
$$
\mu_1^M = 0 \qquad \text{and} \qquad \psi_1^M \equiv \frac{1}{\sqrt{|\mathcal T_M|}} = \frac{1}{M^{d/2}}.
$$
 One can directly check that  the eigenvalues of the operators $-\Delta^M$ inherit from \eqref{eq:scaling}  the following scaling property:
\begin{align} \label{eq:scaling_eig}
\mu_k^M = M^{-2} \mu_k^1, \quad k = 1,2,3,\ldots.
\end{align}
Due to the spectral theorem, the spectrum of the operator $\Phi(-\Delta^M)$ consists of eigenvalues $\lambda_1^M < \lambda_2^M \leq \lambda_3^M \leq ... \to \infty$ satisfying
\begin{align}\label{eq:eigenvalue_subord}
\lambda_1^M = 0 \qquad \text{and} \qquad \lambda_k^M = \Phi(\mu_k^M), \ \ k = 2,3,\ldots,
\end{align}
and the corresponding eigenfunctions are exactly the same as above. More precisely, we have
\begin{align}\label{eq:eigenvalue_deltaM}
\Phi(-\Delta^M) \psi^M_k = \lambda_k^M \psi^M_k \qquad \text{and} \qquad P_t^M \psi^M_k = {\rm e}^{-t \lambda_k^M} \psi^M_k, \ \  t>0, \ \ k =1,2,\ldots.
\end{align}

Our present work requires additional  regularity  properties of the kernels $p^M_t(x,y)$ such as continuity, boundedness, and on-diagonal estimates,  gathered in the following Lemma.
\begin{lemma} \label{lem:regularity}
The following hold.
\begin{itemize}
\item[(1)] There exists a constant $C >0$ such that for every $M, n \in \Z_+$, $x,y \in [0,M)^d$ and $t>0$ we have
$$
\sum_{\mathbf i \in M\Z^d \atop \mathbf i \notin [-nM,nM]^d } g_t(x,y+\mathbf i) \leq \frac{C}{M^{d}} \left(\frac{Mn}{\sqrt{t}} \vee 1\right)^{d-2} {\rm e}^{-\frac{1}{16}\left(\frac{Mn}{\sqrt{t}} \vee 1\right)^2}.
$$
In particular, the series defining the kernel $g^M_t(x,y)$ is uniformly convergent in $(t,x,y)$ on every cuboid $[u,v] \times [0,M)^d \times [0,M)^d$,  $0 < u < v < \infty$, and there exists a universal constant $\widetilde C >0$ such that for every $M \in \Z_+$ we have
$$
g^M_t(x,y) \leq \widetilde C \left(t^{-d/2} \vee M^{-d}\right), \qquad x,y \in \mathcal T_M, \ t>0.
$$
\item[(2)] Under the assumption \eqref{eq:basic_ass_BF} the function $(t,x,y) \mapsto p_t^M(x,y)$ is continuous on \linebreak $(0,\infty) \times \mathcal T_M \times \mathcal T_M$. Moreover, there exists a universal constant $C>0$ such that for every $M \in Z_+$ we have
$$
p^M_t(x,y) \leq C \left(\int_0^{\infty} {\rm e}^{-t \Phi(\lambda^{2/d})}{\rm d}  \lambda \vee M^{-d}\right), \qquad x,y \in \mathcal T_M, \ t>0.
$$
In particular, $p_t^M(x,y)$ is bounded on every cuboid $[t_0,\infty) \times \mathcal T_M \times \mathcal T_M$, $t_0>0$.
\end{itemize}
\end{lemma}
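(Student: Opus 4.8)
The plan is to prove part (1) by a direct Gaussian tail estimate on the lattice sum, then deduce the bound on $g^M_t$ by splitting the full series $\sum_{\mathbf i \in M\Z^d}$ into the near block ($\mathbf i \in [-M,M]^d$, contributing only one or finitely many terms) and the far tail handled by the estimate just proved; part (2) then follows from part (1) by subordinating (integrating against $\eta_t$) and invoking the regularity facts for $p_t$ already recorded in the excerpt (continuity of $(t,x)\mapsto p_t(x)$ from \cite[Lemma 2.1]{bib:KSch2019} and the Fubini identity \eqref{eq:pM_def}).

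For part (1), fix $x,y \in [0,M)^d$. For $\mathbf i \in M\Z^d$ with $\mathbf i \notin [-nM,nM]^d$ we have $|y + \mathbf i - x| \geq |\mathbf i|_\infty - 2M\sqrt d \cdot(\text{const})$; more carefully, writing $\mathbf i = M\mathbf j$ with $\mathbf j \in \Z^d$, $|\mathbf j|_\infty > n$, one has $|x - y - \mathbf i| \geq \tfrac12 M|\mathbf j|$ once $|\mathbf j|_\infty \geq 2$ (and the terms with $|\mathbf j|_\infty \le 1$ are not in the sum when $n\ge 1$), so
\[
\sum_{\mathbf i \in M\Z^d \atop \mathbf i \notin [-nM,nM]^d} g_t(x,y+\mathbf i)
\leq (4\pi t)^{-d/2} \sum_{|\mathbf j|_\infty > n} {\rm e}^{-M^2|\mathbf j|^2/(16 t)}.
\]
I would then compare the remaining sum to a Gaussian integral: $\sum_{|\mathbf j|_\infty > n} {\rm e}^{-a|\mathbf j|^2}$ is bounded, up to a dimensional constant, by $\int_{|z|_\infty > n} {\rm e}^{-a|z|^2/2}\,{\rm d}z$, which in turn is controlled by $(a^{-1/2} \vee n)^{d-2}\,a^{-1}\,{\rm e}^{-a n^2/2}$ — a standard incomplete-Gaussian bound via polar coordinates or iterated one-dimensional estimates. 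Taking $a = M^2/(16t)$ and absorbing the $(4\pi t)^{-d/2}$ and the factor $a^{-1} = 16t/M^2$ produces exactly the claimed form with the $\bigl(\tfrac{Mn}{\sqrt t} \vee 1\bigr)^{d-2}$ prefactor and the $\exp\!\bigl(-\tfrac{1}{16}(\tfrac{Mn}{\sqrt t}\vee 1)^2\bigr)$ decay; the "$\vee 1$" accommodates the regime $\sqrt t \gtrsim Mn$ where the sum is instead $O(t^{-d/2})$ times a convergent series, hence $O(M^{-d})$ after noting $t^{-d/2} \le M^{-d}$ fails in general but is compensated by the crude bound $\sum g_t \le \int g_t = 1$ scaled appropriately. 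Uniform convergence of the series on $[u,v]\times[0,M)^d\times[0,M)^d$ is immediate from the tail bound (it goes to $0$ as $n\to\infty$ uniformly), and continuity of $g^M_t$ follows since each partial sum is continuous. For the stated bound on $g^M_t(x,y)$ itself: the near term $\sum_{\mathbf i \in [-M,M]^d \cap M\Z^d} g_t(x,y+\mathbf i)$ has at most $3^d$ summands, each $\le (4\pi t)^{-d/2}$, giving $O(t^{-d/2})$; the far tail (case $n=1$) is $\le \tfrac{C}{M^d}(\tfrac{M}{\sqrt t}\vee 1)^{d-2}{\rm e}^{-\tfrac1{16}(\tfrac M{\sqrt t}\vee 1)^2}$, which is $O(t^{-d/2})$ when $\sqrt t \le M$ and $O(M^{-d})$ when $\sqrt t \ge M$ (the exponential kills the polynomial). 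Combining, $g^M_t(x,y) \le \widetilde C(t^{-d/2} \vee M^{-d})$.

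For part (2), the continuity of $(t,x,y)\mapsto p^M_t(x,y)$ follows from $p^M_t(x,y) = \sum_{\mathbf i\in M\Z^d} p_t(x,y+\mathbf i)$ together with the continuity of $(t,x)\mapsto p_t(x)$ on $(0,\infty)\times\R^d$ (cited in the excerpt) once we have locally uniform convergence of this series — and that in turn follows by integrating the part (1) tail bound against $\eta_t$, since
\[
\sum_{\mathbf i \in M\Z^d \atop \mathbf i\notin[-nM,nM]^d} p_t(x,y+\mathbf i)
= \int_0^\infty \sum_{\mathbf i \in M\Z^d \atop \mathbf i\notin[-nM,nM]^d} g_u(x,y+\mathbf i)\,\eta_t({\rm d}u)
\leq \frac{C}{M^d}\int_0^\infty \Bigl(\tfrac{Mn}{\sqrt u}\vee 1\Bigr)^{d-2}{\rm e}^{-\frac1{16}(\frac{Mn}{\sqrt u}\vee 1)^2}\eta_t({\rm d}u),
\]
which is finite and tends to $0$ as $n\to\infty$ by dominated convergence (for fixed $M$, $t$; the integrand is bounded by $\tfrac{C}{M^d}$ times something $\eta_t$-integrable, e.g. using $\eta_t(\{0\})=0$). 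Boundedness then comes from the same split: the near term $\sum_{\mathbf i\in[-M,M]^d\cap M\Z^d} p_t(x,y+\mathbf i) \le 3^d\,p_t(0) \le 3^d (4\pi)^{-d/2}\int_0^\infty u^{-d/2}\eta_t({\rm d}u) = C\int_0^\infty {\rm e}^{-t\Phi(\lambda^{2/d})}\,{\rm d}\lambda$ by Lemma \ref{lem:basic_subord} with $\gamma = d/2$; and the far term ($n=1$) is $O(M^{-d})$ as in the argument for $g^M_t$ (integrate the case-$n=1$ bound against $\eta_t$ and use $t^{-d/2}\vee M^{-d}$ uniformly, then $\int u^{-d/2}\eta_t({\rm d}u) = C\int {\rm e}^{-t\Phi(\lambda^{2/d})}{\rm d}\lambda$ again). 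The final "$[t_0,\infty)$" boundedness is then Lemma \ref{lem:basic_subord}'s uniform bound $\sup_{t\ge t_0}\int u^{-d/2}\eta_t({\rm d}u) < \infty$.

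The main obstacle is purely the bookkeeping in part (1): getting the exponent constant $\tfrac1{16}$ and the precise $(d-2)$ power of the prefactor right requires care in comparing the lattice sum to the Gaussian integral and in the polar-coordinate estimate $\int_r^\infty s^{d-1}{\rm e}^{-as^2/2}\,{\rm d}s \asymp (r\vee a^{-1/2})^{d-2}\,a^{-1}\,{\rm e}^{-ar^2/2}$; the "$\vee 1$" normalization must be threaded through consistently so that the bound is genuinely uniform in $M$ and valid for all $t>0$ (not just large $t$). Everything downstream — the $g^M_t$ bound, the subordination to $p^M_t$, continuity, and the on-diagonal estimate — is then a routine consequence of part (1) and the already-established Lemma \ref{lem:basic_subord}.
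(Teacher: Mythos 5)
Your strategy for part (1) is sound and genuinely a bit different from the paper's: you compare the lattice sum to a $d$-dimensional Gaussian integral over $\{|z|_\infty > n\}$ and then use a polar-coordinate incomplete-Gaussian estimate, whereas the paper groups lattice points into $\ell^\infty$-shells, bounds the shell count by $c\,k^{d-1}$, compares the resulting one-dimensional sum over $k$ to an integral (after shifting the exponent from $k^2/4$ to $(k+1)^2/16$ so that the sum is dominated by the integral term-by-term), substitutes, and invokes the same incomplete-Gaussian bound. Both routes produce the $\frac{C}{M^d}(\frac{Mn}{\sqrt t}\vee 1)^{d-2}{\rm e}^{-c(\frac{Mn}{\sqrt t}\vee 1)^2}$ form. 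Two small cautions: your claim $|x-y-\mathbf i|\geq \frac12 M|\mathbf j|$ for $|\mathbf j|_\infty \geq 2$ is too optimistic in higher dimensions (you would need $|\mathbf j|\geq 2\sqrt d$; work in $\ell^\infty$ throughout, as the paper implicitly does, or settle for a dimension-dependent fraction), and the aside about the ``$\vee 1$'' regime being ``compensated by the crude bound $\sum g_t\leq 1$'' is not a usable pointwise estimate -- the Gaussian integral comparison already gives the $M^{-d}$ bound in the $\sqrt t\gtrsim Mn$ regime once you carry the substitution through, so you should just say that. Your derivation of $g_t^M\leq\widetilde C(t^{-d/2}\vee M^{-d})$ from the $n=1$ tail bound plus the near term is the same as the paper's.

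There is a genuine gap in your proof of part (2), precisely where you pass from the tail estimate to continuity. You correctly reduce continuity of $p_t^M$ to locally uniform convergence of $\sum_{\mathbf i}p_t(x,y+\mathbf i)$ on cuboids $[t_0,t_1]\times[0,M)^d\times[0,M)^d$, and you correctly observe that the tail is dominated by $\frac{C}{M^d}\int_0^\infty(\frac{Mn}{\sqrt u}\vee 1)^{d-2}{\rm e}^{-\frac{1}{16}(\frac{Mn}{\sqrt u}\vee 1)^2}\eta_t({\rm d}u)$. But your dominated-convergence argument, which you explicitly run ``for fixed $M$, $t$'', only yields that this quantity tends to $0$ for each \emph{fixed} $t$ -- it does not give the convergence \emph{uniformly} in $t\in[t_0,t_1]$ that continuity requires, because the family of measures $\eta_t$ varies with $t$. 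The paper closes this by splitting the $u$-integral at $u=n$: on $u\leq n$ one uses $\sup_{t\geq t_0}\int_0^\infty u^{-d/2}\eta_t({\rm d}u)<\infty$ (Lemma~\ref{lem:basic_subord}) together with the deterministic factor $n^{d-1}{\rm e}^{-n/16}\to 0$, and on $u>n$ one uses the explicit, $t$-uniform bound $\eta_t(n,\infty)\leq (1-{\rm e}^{-1})^{-1}\,t_1\,\Phi(1/n)\to 0$, which in turn requires the auxiliary estimate $\int_0^n{\rm e}^{-\lambda u}\eta_t(u,\infty)\,{\rm d}u\leq t\Phi(\lambda)/\lambda$ (from \cite[Lemma~2.2]{bib:KK-KPP1}). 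You need some version of this $t$-uniform control of the subordinator's tails; without it, the step from ``tail $\to 0$ for each $t$'' to ``the series converges locally uniformly in $(t,x,y)$'' does not follow. (One could alternatively invoke Dini's theorem, but that requires first establishing continuity in $t$ of the tail $n\mapsto\int\phi_n(u)\,\eta_t({\rm d}u)$, which is not free either.) Your treatment of the on-diagonal bound in part (2), splitting into the near block and the $n=1$ far tail and applying Lemma~\ref{lem:basic_subord} with $\gamma=d/2$, is correct and matches the paper's argument.
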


\begin{proof}
\noindent
(1) For every $x,y \in [0,M)^d$, $t>0$ and $M, n \in \Z_+$ we have
\begin{eqnarray*}
\sum_{\mathbf i \in M\Z^d \atop \mathbf i \notin [-nM,nM]^d } g_t(x,y+\mathbf i) & \leq& \sum_{k \geq n} \frac{(2(k+1))^d-(2k)^d}{(4\pi t)^{d/2}} {\rm e}^{-\frac{(kM)^2}{4t}} \\
                                     & \leq &\frac{c}{\sqrt{t} M^{d-1}}\sum_{k \geq n} \left(\frac{kM}{\sqrt{t}}\right)^{d-1}{\rm e}^{-\left(\frac{(k+1)M}{4\sqrt{t}}\right)^2} \\
																		 & \leq &\frac{c}{\sqrt{t} M^{d-1}}\int_n^{\infty}\left(\frac{Mx}{\sqrt{t}}\right)^{d-1}{\rm e}^{-\frac{1}{ 16} \left(\frac{Mx}{\sqrt{t}}\right)^2} {\rm d}x,
\end{eqnarray*}
with an absolute constant $c>0$. By substitution, the latter expression is equal to
$$
\frac{c}{M^{d}}\int_{\frac{Mn}{\sqrt{t}}}^{\infty}y^{d-1} {\rm e}^{-\frac{y^2}{16}} {\rm d}y.
$$
Using the elementary estimate
$$
\int_a^{\infty} y^{d-1} {\rm e}^{-\frac{y^2}{ 16}} {\rm d}y \leq c_1 (a \vee 1)^{d-2} {\rm e}^{-\frac{1}{ 16}(a \vee 1)^2}, \quad a >0,
$$
where $c_1>0$ is a uniform constant, we finally get
$$
\sum_{\mathbf i \in M\Z^d \atop \mathbf i \notin [-nM,nM]^d } g_t(x,y+\mathbf i) \leq \frac{c_2}{M^{d}} \left(\frac{Mn}{\sqrt{t}} \vee 1\right)^{d-2} {\rm e}^{-\frac{1}{ 16}\left(\frac{Mn}{\sqrt{t}} \vee 1\right)^2},
$$
which is exactly the first assertion of part (1). The uniform convergence follows directly from this uniform bound for the tail of the series. To prove the other assertion of (1), we write
$$
g_t^M(x,y) \leq c_3 t^{-d/2} + \sum_{\mathbf i \in M\Z^d \atop \mathbf i \notin [-M,M]^d } g_t(x,y+\mathbf i).
$$
The second term can be easily estimated by using the bound proven above with $n=1$. We have two cases. If $\sqrt{t} \geq M$, then $g_t^M(x,y) \leq c_3 t^{-d/2}+c_2M^{-d} \leq (c_2+c_3) M^{-d}$. If  $\sqrt{t} < M$, then
$$
\left(\frac{M}{\sqrt{t}} \vee 1\right)^{d-2} {\rm e}^{-\frac{1}{ 16}\left(\frac{M}{\sqrt{t}} \vee 1\right)^2} = \left(\frac{M}{\sqrt{t}}\right)^{d-2} {\rm e}^{-\frac{1}{ 16}\left(\frac{M}{\sqrt{t}}\right)^2} \leq c_4,
$$
and, similarly as above, $g_t^M(x,y) \leq c_3 t^{-d/2}+c_2c_4M^{-d} \leq (c_2c_4+c_3) t^{-d/2}$. This implies the second estimate in (1).

\smallskip

\noindent
(2) We first show the estimate and the boundedness. By the upper estimate for the kernel $g_t^M(x,y)$ proven above, for $M \in \Z_+$, $x, y \in \mathcal T_M,$ and $t>0$, we have
$$
p^M_t(x,y) \leq c_4 \left(\int_0^{M^2} u^{-d/2} \eta_t({\rm d}u) + M^{-d} \eta_t[M^2,\infty)\right) \leq c_4 \left(\int_0^{\infty} u^{-d/2} \eta_t({\rm d}u) + M^{-d}\right).
$$
We then derive from Lemma \ref{lem:basic_subord} that
$$
p^M_t(x,y) \leq c_5 \left(\int_0^{\infty} {\rm e}^{-t \Phi(\lambda^{2/d})} {\rm d}\lambda +M^{-d} \right),
$$
and, for every $t_0>0$,
$$
\sup_{(t,x,y) \in [t_0,\infty) \times \mathcal T_M \times \mathcal T_M} p^M_t(x,y) < \infty.
$$
We now prove the continuity. Since the function $(t,x,y) \mapsto p_t(x,y)$ is continuous on \linebreak $(0,\infty) \times \R^d \times \R^d$, it is enough to justify that the series
$$
\sum_{\mathbf i \in M\Z^d} p_t(x,y+\mathbf i)
$$
is uniformly convergent on every cuboid $[t_0, t_1] \times [0,M)^d \times [0,M)^d$, $0<t_0 < t_1 < \infty$. We only need to prove that the tail
$$
\sum_{\mathbf i \in M\Z^d \atop \mathbf i \notin [-nM,nM]^d } p_t(x,y+\mathbf i)
$$
goes to zero as $n \to \infty$, uniformly in $(t,x,y) \in [t_0, t_1] \times [0,M)^d \times [0,M)^d$. Using the tail estimate from part (1), Fubini-Tonelli an the fact that $c_6:= \sup_{r \geq 1} r^{ d-2} {\rm e}^{- r^2/16} < \infty$, we get
\begin{align*}
\sum_{\mathbf i \in M\Z^d \atop \mathbf i \notin [-nM,nM]^d } & p_t(x,y+\mathbf i) \leq  \frac{c_7}{M^{d}} \int_0^{\infty}\left(\frac{Mn}{\sqrt{u}} \vee 1\right)^{d-2} {\rm e}^{-\frac{1}{16}\left(\frac{Mn}{\sqrt{u}} \vee 1\right)^2} \eta_t({\rm d}u) \\
                                                         & \leq \frac{c_7}{M^{d}} \left(\int_{(0,n]}\left(\frac{Mn}{\sqrt{u}}\right)^{d-2} {\rm e}^{-\frac{1}{16}\left(\frac{Mn}{\sqrt{u}}\right)^2} \eta_t({\rm d}u) + c_6 \eta_t\big(n, \infty\big)\right) \\
																												 & \leq \frac{c_7}{M^2} n^{ d-1} {\rm e}^{-\frac{n}{16}} \int_{(0,n]} u^{-d/2} \eta_t({\rm d}u) + c_6c_7 \eta_t(n,\infty).
\end{align*}
Now, by Lemma \ref{lem:basic_subord}, the integral $\int_0^\infty u^{-d/2} \eta_t({\rm d}u)$ is uniformly bounded for $t \geq t_0$. Moreover, by following the argument in \cite[Lemma 2.2]{bib:KK-KPP1}, we obtain
$$
\int_0^n {\rm e}^{-\lambda u} \eta_t(u,\infty) {\rm d}u \leq \frac{t \Phi(\lambda)}{\lambda}, \quad \lambda, t>0,
$$
which yields

$$
\eta_t(n,\infty) (1-{\rm e}^{-\lambda n}) \leq t \Phi(\lambda), \quad \lambda, t>0.
$$
By taking $\lambda = 1/n$, we get $\eta_t(n,\infty) \leq (1-{\rm e}^{-1})^{-1} t \Phi(1/n) \leq (1-{\rm e}^{-1})^{-1} t_1 \Phi(1/n)$, whenever $t \leq t_1$. This implies the claimed uniform convergence, completing the proof of the lemma.

\end{proof}

The semigroup $\big\{P^M_t: t \geq 0\big\}$ determines a conservative Markov process $(X^M_t)_{t \geq 0}$ on
the  torus $\mathcal T_M$. If we denote by $\mathbf P_x^M$  the measure concentrated on trajectories that start from  $x \in \mathcal T_M$, then
$$
\mathbf P_x^M(X^M_t \in A) = P^M_t \I_A(x) = \int_A p^M_t(x,y) {\rm d}y, \quad A \in \mathcal B(\mathcal T_M), \ x \in \mathcal T_M, \ t >0.
$$
It is a symmetric Feller process with continuous and bounded transition probability densities $p^M_t(x,y)$. Due to \eqref{eq:pM_def} this process can be identified pathwise as
$$
X^M_t = \pi_M(X_t), \quad t>0,
$$
where $(X_t)_{t \geq 0}$ is the subordinate Brownian motion in $\R^d$, introduced in the previous section. Throughout the paper we call this process the \emph{subordinate Brownian motion on the torus $\mathcal T_M$}.

For given $t\geq 0$ and   $x,y\in\mathcal T_M,$  the bridge measure of the process $X^M,$ conditioned to have $ X_t^M=y,$
$\mathbf P_x^M-$almost surely,   is defined by a relation similar to \eqref{eq:bridge}. These measures are denoted  by $\mathbf P_{x,y}^{M,t}.$

The bridge measures for the
process in $\R^d$ and on the torus $\mathcal T_M$ are related through the
following identity.

\begin{lemma}
	\label{lm:rotation}
	 For every $t>0$, $x,y \in \R^d$, $M=1,2,...$ and any set $A \in \mathcal B(D([0,t],\mathcal T_M)$ we have
	\begin{equation}\label{eq:rot1}
	p^M_t(\pi_M(x),\pi_M(y))\mathbf P^{M,t}_{\pi_M(x),\pi_M(y)}[A]=
	\sum_{y'\in\pi_M^{-1}(\pi_M(y))} p_t(x,y')\mathbf
	P^t_{x,y'}[\pi_M^{-1}(A)]\end{equation}	
 ($D([0,t],\mathcal T_M))$ is the Skorohod space).
\end{lemma}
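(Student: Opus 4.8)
The plan is to establish \eqref{eq:rot1} first on a generating $\pi$-system of finite-dimensional cylinder sets and then propagate it to all of $\mathcal B(D([0,t],\mathcal T_M))$ by the uniqueness theorem for finite measures. First I would observe that both sides of \eqref{eq:rot1}, viewed as set functions of $A$, are finite positive measures: the left-hand side is the constant $p^M_t(\pi_M(x),\pi_M(y))$ times the probability measure $\mathbf P^{M,t}_{\pi_M(x),\pi_M(y)}$; for the right-hand side one uses that $\omega\mapsto\pi_M\circ\omega$ is continuous (hence Borel measurable) from $D([0,t],\R^d)$ to $D([0,t],\mathcal T_M)$, so each $A\mapsto\mathbf P^t_{x,y'}[\pi_M^{-1}(A)]$ is a probability measure, and by \eqref{eq:pM_def} (together with translation invariance of the free kernel) $\sum_{y'\in\pi_M^{-1}(\pi_M(y))}p_t(x,y')=p^M_t(\pi_M(x),\pi_M(y))<\infty$. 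Taking $A$ to be the whole Skorohod space (itself a cylinder set) already shows that the two total masses coincide.

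Next I would verify \eqref{eq:rot1} for cylinder sets $A=\{\gamma:\gamma(s_1)\in B_1,\dots,\gamma(s_n)\in B_n\}$ with $0<s_1<\dots<s_n<t$ and $B_k\in\mathcal B(\mathcal T_M)$ (a constraint at the endpoint $s=t$ is harmless, since both bridge measures live on paths ending at $\pi_M(y)$, resp.\ at the points of $\pi_M^{-1}(\pi_M(y))$). Expanding each bridge measure by the relation \eqref{eq:bridge} (and its counterpart on $\mathcal T_M$) at $s=s_n$ together with the Markov property, the left-hand side of \eqref{eq:rot1} becomes the iterated integral of $p^M_{s_1}(\pi_M(x),z_1)p^M_{s_2-s_1}(z_1,z_2)\cdots p^M_{t-s_n}(z_n,\pi_M(y))$ over $B_1\times\dots\times B_n$ on the torus, while the right-hand side becomes $\sum_{\mathbf i\in M\Z^d}$ of the corresponding iterated integral of free kernels over $\pi_M^{-1}(B_1)\times\dots\times\pi_M^{-1}(B_n)$ with terminal point $y+\mathbf i$. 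Identifying each $B_k$ with a subset of $[0,M)^d$ and writing $\pi_M^{-1}(B_k)=\bigsqcup_{\mathbf j_k\in M\Z^d}(B_k+\mathbf j_k)$, I would use translation invariance $p_s(a+\mathbf j,b+\mathbf k)=p_s(a,b+\mathbf k-\mathbf j)$ and perform the change of summation variables $\mathbf k_1=\mathbf j_1$, $\mathbf k_r=\mathbf j_r-\mathbf j_{r-1}$ for $2\le r\le n$, and $\mathbf k_{n+1}=\mathbf i-\mathbf j_n$, which is a bijection of $(M\Z^d)^{n+1}$ onto itself. After this reindexing the $(n+1)$ lattice sums decouple, Tonelli lets me carry each one inside the integral, and each collapses, via \eqref{eq:pM_def}, to one of the torus kernels $p^M_{s_1},p^M_{s_2-s_1},\dots,p^M_{t-s_n}$; the result is precisely the left-hand side.

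Finally, since the finite-dimensional cylinder sets form a $\pi$-system generating $\mathcal B(D([0,t],\mathcal T_M))$, and the two finite measures in \eqref{eq:rot1} agree on this $\pi$-system and on the whole space, they agree everywhere, which is the assertion. The one genuinely delicate step is the combinatorial unfolding in the second paragraph: one must check carefully that $(\mathbf j_1,\dots,\mathbf j_n,\mathbf i)\mapsto(\mathbf k_1,\dots,\mathbf k_{n+1})$ is a bijection before decoupling, and that Tonelli's theorem applies to interchange the nonnegative lattice sums with the integrals; once this bookkeeping is done, everything else is routine. Conceptually, the identity simply expresses that $\mathbf P^{M,t}_{\pi_M(x),\pi_M(y)}$ is the $\pi_M$-pushforward of the mixture $\sum_{y'}\frac{p_t(x,y')}{p^M_t(\pi_M(x),\pi_M(y))}\mathbf P^t_{x,y'}$, i.e.\ of the law of $X$ under $\mathbf P_x$ conditioned on $\{X_t\in\pi_M^{-1}(\pi_M(y))\}$; the cylinder-set computation is the clean way of making this precise when the conditioning event has Lebesgue measure zero.
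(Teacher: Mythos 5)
Your argument is correct and follows exactly the route the paper itself takes: verify the identity on finite-dimensional cylinder sets and then extend by a monotone-class (equivalently, $\pi$--$\lambda$) argument for finite measures. The paper declares the cylinder-set case ``readily seen'' and points to a fractal analogue in an earlier reference; you simply fill in the periodization $\pi_M^{-1}(B_k)=\bigsqcup_{\mathbf j\in M\Z^d}(B_k+\mathbf j)$, the translation-invariance step, and the lattice reindexing $(\mathbf j_1,\dots,\mathbf j_n,\mathbf i)\mapsto(\mathbf k_1,\dots,\mathbf k_{n+1})$ that make that step explicit.
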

This statement  is readily seen for cylindrical sets and then
extended to the desired range of $A$'s by the Monotone Class Theorem.
Its fractal counterpart was discussed in \cite[Lemma 2.6]{bib:KK-KPP1}.

\subsection{Random  Anderson (alloy-type) potentials} \label{sec:potentials}
Our approach in the present paper allows us to study the alloy-type random fields
\begin{align} \label{eq:alloy}
V^{\omega}(x) = \sum_{{\bf i} \in \Z^d} q_{{\bf i}}(\omega) W(x-{\bf i}), \quad x \in \R^d,
\end{align}
with  possibly  singular single-site potentials $W$ of bounded support which are in Kato classes corresponding to the operators considered. The main part of our argument is based on an application of  certain periodization of such potentials: for given $M\geq 1$ we define
\begin{eqnarray}\label{eq:szn-per-of-V}
V_M^{\omega}(x)&:= &\sum_{{\bf i}\in[0, M)^d} \left(q_{\bf i}(\omega) \sum_{{\bf i'}\in \pi_{M}^{-1}({\bf i})} W(x-{\bf i'})\right) \nonumber\\
&=& \sum_{ {\bf i} \in \Z^d} q_{\pi_M({\bf i})} W(x-{\bf i}),\quad x\in\R^d.
\end{eqnarray}
This means that we first periodize the lattice random variables $\{q_{\mathbf i}\}_{\mathbf i\in \Z^d}$ with respect to $\pi_M$, and then, based on that, we construct a new random potential which is also periodic in the usual sense: $V_M^{\omega}(x+{\bf i}) = V_M^{\omega}(x)$, ${\bf i} \in M \Z^d$.  We call it  the {\em Sznitman-type periodization} of $V^{\omega}$.
For simplicity, we will use the same letter for the restriction of this potential to $\mathcal T_M.$

Recall that the Kato class $\mathcal K$ associated with the operator $\Phi(-\Delta)$ consists of those Borel functions $f:\R^d \to \R$ for which
$$
\lim_{t \searrow 0} \sup_{x \in \R^d} \int_0^t P_s|f|(x)\,{\rm d}s = 0.
$$
Similarly, a Borel function $f:\mathcal T_M \to \R$ belongs to the Kato class $\mathcal K^M$ associated with $\Phi(-\Delta^M)$ if
$$
\lim_{t \searrow 0} \sup_{x \in \mathcal T_M} \int_0^t P^M_s|f|(x)\,{\rm d}s = 0.
$$
Moreover,  we say that a Borel function $f$ belongs to the local Kato class $\mathcal K_{\loc}$ if its restriction to an arbitrary bounded Borel subset of  $\R^d$ is in $\mathcal K$.
Note that the torus $\mathcal T_M$ is a compact space and so the local Kato class for $\mathcal T_M$ would agree with $\mathcal K^M$. Therefore there is no need to define it separately. One can check that $L^{\infty}(\R^d) \subset \mathcal K$ and $L^{\infty}_{\loc}(\R^d) \subset \mathcal K_{\loc}$. Moreover, $\mathcal K_{\loc} \subset L^1_{\loc}(\R^d)$ and $\mathcal K^M \subset L^1(\mathcal T_M) = L^1_{\loc}(\mathcal T_M)$.

We now show that the alloy-type random potentials $V^{\omega}$ and $V^{\omega}_M$ inherit the Kato-regularity from their profiles $W$.

\begin{proposition} \label{prop:Kato_regularity} Let $W \in \mathcal K$, $W \geq 0$, be of bounded support and let the assumption \textbf{\textup{(Q)}} hold.
Then, for every $M \in \Z_+$ and $\omega \in \Omega$, we have
\begin{itemize}
\item[(1)] $V^{\omega} \in \mathcal K_{\loc}$,
\item[(2)] $V^{\omega}_M \in \mathcal K_{\loc}$,
\item[(3)] $V^{\omega}_M \in \mathcal K^M$.
\end{itemize}
\end{proposition}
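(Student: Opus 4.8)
The plan is to reduce everything to the single‑site level: since $W\in\mathcal K$ and the sum defining $V^\omega$ (resp.\ $V^\omega_M$) is locally finite, the Kato condition should follow by summing finitely many translates of $W$ together with a tail estimate controlled by the on‑diagonal bounds of $p_t$ (resp.\ $p_t^M$) already established in Lemma \ref{lem:regularity} and \eqref{eq:diag_on_pt}.

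First I would prove (1). Fix a bounded Borel set $B\subset\R^d$; without loss of generality $B\subset[-N,N]^d$ for some $N\in\Z_+$. Because $\supp W\subset[-M_0,M_0]^d$, only the lattice points ${\bf i}$ with $|{\bf i}|_\infty\le N+M_0$ contribute to $V^\omega$ on $B$, so $\I_B V^\omega\le \sum_{{\bf i}\in\Z^d,\,|{\bf i}|_\infty\le N+M_0} q_{\bf i}(\omega)\,W(\cdot-{\bf i})$, a finite sum. Since $\mathcal K$ is a vector space (immediate from subadditivity of $\int_0^t P_s|\cdot|(x)\,{\rm d}s$ and $P_s(\tau_{\bf i}f)(x)=P_s f(x-{\bf i})$ by translation invariance of $p_s$), each summand $q_{\bf i}(\omega)W(\cdot-{\bf i})$ is in $\mathcal K$, hence so is the finite sum; therefore $V^\omega\in\mathcal K_{\loc}$. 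The same argument, using that $V^\omega_M$ is again a locally finite sum of nonnegative translates $q_{\pi_M({\bf i})}W(\cdot-{\bf i})$ with bounded‑support profile, gives (2): on any $B\subset[-N,N]^d$ only finitely many ${\bf i}$ matter and each translate lies in $\mathcal K$.

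The substantive point is (3): one must pass from $\mathcal K_{\loc}$ on $\mathcal T_M$ (which is what (2) gives, since $\mathcal T_M$ is compact and local Kato there coincides with $\mathcal K^M$) to the genuine $\mathcal K^M$ bound, i.e.\ control $\sup_{x\in\mathcal T_M}\int_0^t P^M_s|V^\omega_M|(x)\,{\rm d}s$. Here I would use $p^M_s(x,y)=\sum_{{\bf i}\in M\Z^d}p_s(x,y+{\bf i})$ from \eqref{eq:pM_def} to unfold the torus integral into an integral over $\R^d$ against $p_s$: for $x\in\mathcal T_M$ identified with a point of $[0,M)^d$,
\[
P^M_s|V^\omega_M|(x)=\int_{\mathcal T_M}p^M_s(x,y)|V^\omega_M|(y)\,{\rm d}y=\int_{\R^d}p_s(x,y)\Big(\sum_{{\bf i}\in\Z^d}q_{\pi_M({\bf i})}W(y-{\bf i})\Big){\rm d}y.
\]
Since $q=\max_{\bf i}q_{\pi_M({\bf i})}$ over the finitely many residue classes is a finite constant (depending on $\omega$), the inner sum is $\le q\sum_{{\bf i}\in\Z^d}W(y-{\bf i})=:q\,\widetilde W(y)$, and $\widetilde W$ is a bounded‑support–translate sum of a $\mathcal K$ function that is $\Z^d$‑periodic, hence bounded on $\R^d$ if $W\in L^\infty$ — but $W$ may be singular, so instead I would keep $\widetilde W$ as is and note $\int_{\R^d}p_s(x,y)\widetilde W(y)\,{\rm d}y=\sum_{{\bf i}\in\Z^d}\int_{\R^d}p_s(x,y)W(y-{\bf i})\,{\rm d}y=\sum_{{\bf i}\in\Z^d}P_s W(x-{\bf i})$. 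Split this into the $O(1)$ near terms $|{\bf i}|_\infty\le M_0$, whose time‑integral is small for small $t$ because $W\in\mathcal K$ and $\mathcal K$ is translation‑invariant, and a tail $\sum_{|{\bf i}|_\infty>M_0}P_sW(x-{\bf i})$. For the tail, $x-{\bf i}$ stays at distance $\gtrsim|{\bf i}|_\infty$ from $\supp W$, and using $p_s(z)\le (4\pi)^{-d/2}\int_0^\infty u^{-d/2}e^{-|z|^2/(4u)}\eta_s({\rm d}u)$ together with the crude bound $\int_0^\infty P_sW\,{\rm d}s$‑type estimates and the exponential Gaussian decay (exactly as in the tail estimate in Lemma \ref{lem:regularity}(1)–(2)), the tail sum is bounded uniformly in $x$ and in $s\in(0,t]$ by a constant times $\|W\|_{L^1}$ or $\|W\|_{L^2}$, so its integral over $s\in(0,t)$ is $O(t)\to0$. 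Combining, $\sup_x\int_0^t P^M_s|V^\omega_M|(x)\,{\rm d}s\to0$ as $t\searrow0$, which is (3).

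The main obstacle is precisely this last tail estimate for singular $W$: one cannot simply bound $\widetilde W\in L^\infty$, so the argument must exploit the off‑diagonal Gaussian/subordinated decay of $p_s$ to sum the infinitely many far translates. Once one observes that for ${\bf i}$ far from $x$ the kernel $p_s(x-{\bf i},\cdot)$ restricted to $\supp W$ is uniformly (in $s\le t$) comparable to a rapidly decaying function of $|{\bf i}|$ — which is exactly the content of the tail bounds already proved in Lemma \ref{lem:regularity} and the subordination estimate \eqref{eq:diag_on_pt} — the summation converges geometrically and everything goes through; the near terms are handled verbatim by the definition of $\mathcal K$ and its evident translation invariance.
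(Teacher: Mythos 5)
Your treatment of parts (1) and (2) is essentially the paper's: reduce to a finite sum of translates of $W$ and use that $\mathcal K$ is a translation-invariant linear space. For part (3) you take a genuinely different route. The paper keeps everything on the torus: it splits $p^M_s(x,y)=\sum_{{\bf i}\in M\Z^d}p_s(x,y+{\bf i})$ into the central $3^d$ shells, which give $P_s\big(\I_{[-M,M]^d}V^\omega_M\big)(x)$ and are handled by part (2), plus a far tail bounded \emph{uniformly in $s,x,y$} by $cM^{-d}$ via Lemma~\ref{lem:regularity}(1), which is then paired with $V^\omega_M\in L^1(\mathcal T_M)$ (again a consequence of (2)). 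You instead unfold to $\R^d$, bound $V^\omega_M\le q\,\widetilde W$ with $q=\max_{{\bf i}\in[0,M)^d}q_{\bf i}(\omega)$, and split the lattice sum of translates $\sum_{\bf i}P_sW(x-{\bf i})$ into near and far blocks. Both decompositions are viable, and both must ultimately invoke the series tail estimate of Lemma~\ref{lem:regularity}(1) for the far terms; the paper's version has the small advantage of not needing the $\omega$-dependent constant $q$ and of reusing the $L^1$ information from (2) verbatim.

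There is, however, a genuine error in how you define the near/far split, and it makes your key claim false as stated. You put the near terms at $|{\bf i}|_\infty\le M_0$ and assert that for the tail ``$x-{\bf i}$ stays at distance $\gtrsim|{\bf i}|_\infty$ from $\supp W$.'' But $x$ ranges over $[0,M)^d$ with $M$ arbitrarily large, so there are lattice points ${\bf i}$ with $|{\bf i}|_\infty\asymp M>M_0$ for which $x-{\bf i}\in\supp W$; for such ${\bf i}$, when $W$ is singular, $P_sW(x-{\bf i})$ blows up as $s\searrow0$, so the ``tail'' $\sum_{|{\bf i}|_\infty>M_0}P_sW(x-{\bf i})$ is \emph{not} bounded uniformly in $s\in(0,t]$. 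The split must be made relative to $x$ (say near means $|{\bf i}-x|_\infty\le M_0+1$, giving a uniformly bounded number of near terms handled by $W\in\mathcal K$ exactly as in (1)--(2)), and only the genuinely far block $|{\bf i}-x|_\infty>M_0+1$ enjoys the uniform-in-$s$ bound, which does follow from Lemma~\ref{lem:regularity}(1). Finally, \eqref{eq:diag_on_pt} is not the right reference here: it is an on-diagonal bound valid only for $t\ge t_0$, whereas what you need is the off-diagonal lattice tail estimate in Lemma~\ref{lem:regularity}(1), valid for all $t>0$.
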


\begin{proof}
Fix $\omega \in \Omega$ and suppose $\supp W \subset [-M_0,M_0]^d$, for some $M_0 \in \Z_+$.

\medskip

\noindent
(1) Denote $V^{n,\omega} = \I_{[-n,n]^d} V^{\omega}$, $n \in \Z_+$. Observe that
$$
V^{n,\omega}(x) \leq \sum_{{\bf i} \in \Z^d \cap [-M_0-n,M_0+n]^d} q_{{\bf i}}(\omega) W(x-{\bf i}), \quad x \in \R^d.
$$
We have
\begin{align*}
\int_0^t P_s V^{n,\omega}(x)\,{\rm d}s & \leq \sum_{{\bf i} \in \Z^d \cap [-M_0-n,M_0+n]^d} q_{{\bf i}}(\omega) \int_0^t P_s W (x-{\bf i})\,{\rm d}s \\
                           & \leq \sup_{y \in \R^d} \int_0^t P_s W (y) \left(\sum_{{\bf i} \in \Z^d \cap [-M_0-n,M_0+n]^d} q_{{\bf i}}(\omega)\right)\,{\rm d}s.
\end{align*}
The sum on the right hand side  has finitely many terms and $W \in \mathcal K$. Therefore  by taking the supremum over $x \in \R^d$ on the left hand side and then  letting $t \searrow 0$, we get that
$$
\sup_{x \in \R^d} \int_0^t P_s V^{n,\omega}(x) \to 0,
$$
for arbitrary $n \in \Z_+$.
Hence $V^{\omega} \in \mathcal K_{\loc}$.

\medskip

\noindent
(2) The proof is a minor modification of that of (1) as we only need to replace $q_{{\bf i}}(\omega)$ with $q_{\pi_M({\bf i})}(\omega)$ in the sum defining the potential.

\medskip

\noindent
(3) Fix $M \in \Z_+$. By the definition of the operators $P_t^M$ and the potential $V^{\omega}_M$, for every $x \in \mathcal T_M$ and $s>0$ we have
\begin{align*}
P_s^M V^{\omega}_M(x) & = \int_{\mathcal T_M} p^M_s(x,y) V^{\omega}_M(y) {\rm d}y \\
                      & = \int_{[0,M)^d} \left(\sum_{{\bf i} \in M\Z^d} p_t(x,y+{\bf i}) \right)V^{\omega}_M(y) {\rm d}y \\
											& = \int_{[0,M)^d} \left(\sum_{{\bf i} \in M\Z^d \atop {\bf i} \in [-M,M]^d} p_t(x,y+{\bf i}) \ + \sum_{{\bf i} \in M\Z^d \atop {\bf i} \notin [-M,M]^d} p_t(x,y+{\bf i}) \right)V^{\omega}_M(y) {\rm d}y \\
											& = P_t \big(\I_{[-M,M]^d}V^{\omega}_M\big)(x)
											       + \int_{[0,M)^d} \left(\sum_{{\bf i} \in M\Z^d \atop {\bf i} \notin [-M,M]^d} p_t(x,y+{\bf i}) \right)V^{\omega}_M(y) {\rm d}y.
\end{align*}
By Fubini-Tonelli and the tail estimate in Lemma \ref{lem:regularity} (1)
$$
\sum_{{\bf i} \in M\Z^d \atop {\bf i} \notin [-M,M]^d} p_t(x,y+{\bf i})
= \int_0^{\infty} \left(\sum_{{\bf i} \in M\Z^d \atop {\bf i} \notin [-M,M]^d} g_t(x,y+{\bf i}) \right) \eta_t({\rm d}u) \leq cM^{-d},
$$
which gives
$$
P_s^M V^{\omega}_M(x) \leq  P_t \big(\I_{[-M,M]^d}V^{\omega}_M\big)(x) + cM^{-d} \int_{[0,M)^d}V^{\omega}_M(y) {\rm d}y.
$$
By part (2) we have $V^{\omega}_M \in \mathcal K_{\loc}$. In particular, $V^{\omega}_M \in L^1_{\loc}(\R^d)$. Hence
$$
\sup_x \int_0^t P_s^M V^{\omega}_M(x)\,{\rm d}s \leq \sup_x \int_0^t P_t \big(\I_{[-M,M]^d}V^{\omega}_M\big)(x)\,{\rm d}s + c t M^{-d} \int_{[0,M)^d}V^{\omega}_M(y)\, {\rm d}y \longrightarrow 0,
$$
as $t \searrow 0$. This means that $V^{\omega}_M \in \mathcal K^M$.
\end{proof}

As mentioned in the introduction, every bounded function with compact support is automatically in the Kato class $\cK$. We now provide  examples of singular functions from $\cK$.

\begin{example} \label{ex:singular}
Let $\Phi(\lambda) = \lambda^{\alpha/2}$, $\alpha \in (0,2]$ (i.e. we either consider the Laplace operator $-\Delta$ or the fractional Laplace  operators $(-\Delta)^{\alpha/2}$, $\alpha \in (0,2)$). For simplicity, assume additionally that $\alpha < d \in \Z_{+}$. It is known (see e.g.\ \cite{bib:BB}) that in this case
$$
f \in \cK \qquad \Longleftrightarrow \qquad \lim_{r \searrow 0} \sup_{x \in \R^d} \int_{|x-y|<r} \frac{f(y)}{|x-y|^{d-\alpha}} {\rm d}y = 0.
$$
The same is true for $\Phi(\lambda) = (\lambda+m^{2/\alpha})^{\alpha/2}-m$, $\alpha \in (0,2)$, $m>0$, i.e. for the quasi-relativistic operators.
If we now take $W(y):= \I_{B(0,1)}(y) |y|^{-\beta}$, $\beta >0$, then we see that   $W \in \cK$ if and only if $\beta < \alpha$.
 In view of the assumption {\bf (W)} it is also instructive to  verify   that $W \in L^2(\R^d)$ if and only if $\beta < d/2$. In particular, $W \in \cK \cap L^2(\R^d)$ if and only if $\beta < \alpha \wedge d/2$.
\end{example}

\smallskip

This example indicates that the intersection $\cK \cap L^2(\R^d)$ is typically a fairly non-trivial function space, but in general there are no inclusions between $\cK$ and $L^2(\R^d)$.

\subsection{Schr\"odinger operators and the Feynman--Kac formula}

We now  introduce the class of random Schr\"odinger operators based on $\Phi(-\Delta)$ and $-\Phi(-\Delta^M)$, and we discuss their spectral properties.
Our standard reference here will be the monograph of Demuth and van Casteren \cite{bib:DC} which is concerned with the spectral theory of self-adjoint Feller operators.

In the previous sections we have verified that the \emph{subordinate semigroups} $\big\{P_t:t \geq 0\big\}$ and $\big\{P^M_t:t \geq 0\big\}$, determined by the kernels $p_t(x,y)$ and $p^M_t(x,y)$, respectively, satisfy the \emph{basic assumptions of spectral stochastic analysis} (BASSA in short)  and in consequence  the operators $-\Phi(-\Delta)$ and $-\Phi(-\Delta^M)$ are \emph{(free) Feller generators} \cite[Assumptions A1-A4 and Definition 1.3 in Section 1.B]{bib:DC}.

Throughout this section we assume that $V^{\omega}$ and $V^{\omega}_M$ are random alloy-type potentials given by \eqref{eq:alloy} and \eqref{eq:szn-per-of-V}, constructed
for a compactly supported and nonnegative single-site potential $W \in \mathcal K$ and lattice random variables $\big\{q_{\bf i}\big\}_{{\bf i} \in \Z^d}$ satisfying the assumption \textbf{(Q)}. Thus, by Proposition \ref{prop:Kato_regularity}, we have  $V^{\omega} \in \mathcal K_{\loc}$ and $V^{\omega}_M \in \mathcal K^M$, for every realization of lattice configuration. This allows us to define the random Schr\"odinger operators
$$
H^{\omega} = \Phi(-\Delta)+V^{\omega} \qquad \text{and} \qquad H^{\omega}_M = \Phi(-\Delta^M)+V^{\omega}_M
$$
as positive self-adjoint operators on $L^2(\R^d)$ and $L^2(\mathcal T_M)$, respectively \cite[Theorem 2.5]{bib:DC}. It is decisive for this work that the evolution semigroups of these operators can be represented probabilistically with respect to subordinate processes $(X_t)_{t \geq 0}$ and $(X^M_t)_{t \geq 0}$. More precisely, the following Feynman--Kac formulas hold:
$$
P_t^{V^{\omega}} f(x): ={\rm e}^{-tH^{\omega}} f(x) =  \ex_x\left[{\rm e}^{-\int_0^t V^{\omega}(X_s) {\rm d}s} f(X_t)\right], \quad f \in L^2(\R^d), \quad t>0,
$$
and
$$
P_t^{M,V^{\omega}_M} f(x) :={\rm e}^{-tH^{\omega}_M} f(x) =  \ex_x^M\left[{\rm e}^{-\int_0^t V^{\omega}_M(X^M_s) {\rm d}s} f(X^M_t)\right], \quad f \in L^2(\mathcal T_M), \quad t>0.
$$
Both $P_t^{V^{\omega}}$ and  $P_t^{M,V^{\omega}_M}$, $t>0$, are integral operators with bounded and symmetric kernels
$$
p^{V^{\omega}}_t(x,y) = p_t(x,y) \ex_{x,y}^t\left[{\rm e}^{-\int_0^t V^{\omega}(X_s) {\rm d}s}\right],
$$
and
\begin{equation}\label{eq:FK-torus}
p^{M,V^{\omega}}_t(x,y) = p^M_t(x,y) \ex_{x,y}^{M,t}\left[{\rm e}^{-\int_0^t V^{\omega}_M(X^M_s) {\rm d}s}\right],
\end{equation}
where $\ex_{x,y}^t$ and $\ex_{x,y}^{M,t}$ are expected values with respect to bridge measures $\pr_{x,y}^t$ and $\pr_{x,y}^{M,t}$ introduced in  previous sections.

Observe that $p^{M,V^{\omega}}_t(x,y) \leq p^M_t(x,y)$ and recall that for every fixed $t>0$ the kernel $p^M_t(x,y)$ is a bounded function on $\mathcal T_M \times \mathcal T_M$. Since $|\mathcal T_M| = M^d < \infty$, this gives that the operators $P_t^{M,V^{\omega}_M}$, $t>0$, are Hilbert-Schmidt in $L^2(\mathcal T_M)$. This implies that the spectrum of the Schr\"odinger operator $H^{\omega}_M$ is discrete -- it consists of a sequence of eigenvalues
$$ 0\leq
\lambda_1^{M, V^{\omega}} < \lambda_2^{M, V^{\omega}} \leq \lambda_3^{M, V^{\omega}} \leq ... \to \infty
$$
with finite multiplicities, and the corresponding eigenfunctions $\big\{\psi_k^{M, V^{\omega}}\big\}_{k=1}^{\infty}$ form a complete orthonormal system in $L^2(\mathcal T_M)$.

\subsection{Dirichlet Schr\"odinger operators and the integrated density of states} \label{sec:dirichlet}

Denote by $H^{\omega}_\Lambda$  the operator $H^{\omega}$ constrained to a bounded, nonempty region $\Lambda\subset \R^d$ (we consider Dirichlet conditions on $\Lambda^c$ in the non-local case and on $\partial \Lambda$ in the local case) and let $\big\{{\rm e}^{-tH^{\omega}_{\Lambda}}; t\geq 0\big\}$ be its evolution semigroup on $L^2(\Lambda)$. Then we have the  following Feynman--Kac formula:
$$
{\rm e}^{-tH^{\omega}_{\Lambda}} = P_t^{V^{\omega},\Lambda} f(x):= \ex_x\left[{\rm e}^{-\int_0^t V^{\omega}(X_s) {\rm d}s} f(X_t); t<\tau_{\Lambda}\right], \quad f \in L^2(\Lambda, {\rm d}x), \quad t>0.
$$
Here $\tau_{\Lambda}:=\inf\{t\geq 0: \, X_t\notin \Lambda\}$ denotes the first exit time of the process from the domain $\Lambda$. All the $P_t^{V^{\omega},\Lambda}$, $t>0$, are integral operators with bounded and symmetric kernels
\begin{align}
\label{def:sem-dir-kernel-bridge}
p^{V^{\omega}, \Lambda}_t(x,y) = p_t(x,y) \ \ex^t_{x,y}\left[{\rm e}^{-\int_0^t V^{\omega}(X_s) {\rm d}s} ; t<\tau_{\Lambda}\right].
\end{align}
Again, since $|\Lambda|<\infty$, the operators $P_t^{V^{\omega},\Lambda}$, $t>0$, are Hilbert-Schmidt. In particular, there exists a complete orthonormal system, consisting of eigenfunctions of the operator $H^{\omega}_{\Lambda}$. The corresponding eigenvalues satisfy $0\leq \lambda_1^{V^{\omega}}(\Lambda) < \lambda_2^{V^{\omega}}(\Lambda)\leq \lambda_3^{V^{\omega}}(\Lambda) \leq \ldots \to \infty$; each $\lambda_k^{V^{\omega}}(\Lambda)$ is of finite multiplicity and the ground state eigenvalue $\lambda_1^{V^{\omega}}(\Lambda)$ is simple.

We are now in a position to give  the formal definition of the IDS. For a given bounded domain $\Lambda\subset \R^d$, let
\[\ell_{\Lambda}^{\omega}(\cdot)=\frac{1}{|\Lambda|}\sum_{k=1}^\infty\delta_{\lambda_k^{V^{\omega}}(\Lambda)}
(\cdot)\]
be the counting measure on the spectrum of $H_{\Lambda}^{\omega},$ normalized by the volume.
Under the assumption \textbf{(Q)}, the random alloy-type potential $V^{\omega}$ is stationary with respect to $\Z^d$. Therefore if we restrict our attention
to sets $\Lambda$ composed of  unit cubes with  vertices in $\Z^d,$ then
it follows from the maximal ergodic theorem
(see e.g. \cite[Remark VI.1.2]{bib:Car-Lac})  that the measures
$\ell_{\Lambda}^{\omega}$ converge vaguely, as $\Lambda \nearrow \R^d,$ to a nonrandom measure $\ell,$ which is
called the integrated density of states of $H^\omega.$ The  vague convergence of $\ell_{\Lambda}^{\omega}$ when $\Lambda \nearrow \R^d$ amounts to the convergence of their Laplace transforms
\begin{eqnarray*}
L_\Lambda^\omega(t)&=&\frac{1}{|\Lambda|} \int_{[0,\infty)}{\rm e}^{-t\lambda}\ell_{\Lambda}^{\omega}({\rm d}\lambda)= \frac{1}{|\Lambda|}{\rm Tr}\, P_t^{V^{\omega},\Lambda}
=
 \frac{1}{|\Lambda|}\int_\Lambda p^{V^{\omega},\Lambda}_t(x,x) {\rm d}x
\\
&=&
 \frac{p_t(0,0)}{|\Lambda|}\int_\Lambda \ex_{x,x}^t\left[{\rm e}^{-\int_0^t V^\omega(X_s){\rm d}s}; t < \tau_\Lambda \right] {\rm d}x,
 \end{eqnarray*}
 for any fixed $t>0.$ Denoting by $L$ the Laplace transform of the measure $\ell,$ we have
\begin{equation}\label{eq:el-almost-everywhere}
L(t) = \lim_{\Lambda \nearrow \R^d} \mathbb E^{\mathbb Q}L_\Lambda^\omega(t), \quad t>0.
\end{equation}
Let us note that from  the $\Z^d-$stationarity of the potential we have that for any $\Lambda$ as above,
\begin{equation}\label{eq:IDS-formula}
L(t)=\frac{p_t(0,0)}{|\Lambda|}\int_\Lambda  \mathbb E^{\mathbb Q}\mathbf E_{x,x}^t\left[{\rm e}^{-\int_0^t V^\omega(X_s){\rm d}s}\right]{\rm d}x.
\end{equation}
In particular, for any $M\in\Z_+,$
\begin{equation}\label{eq:IDS-formula_2}
L(t)=\frac{p_t(0,0)}{M^d}\int_{[0,M)^d}  \mathbb E^{\mathbb Q}\mathbf E_{x,x}^t\left[{\rm e}^{-\int_0^t V^\omega(X_s){\rm d}s}\right]{\rm d}x.
\end{equation}

In the next two sections we will determine the rate of decay, as $t\to\infty,$ of   the Laplace transform  $L(t)$ of  the measure $\ell.$

\section{The upper bound for the Laplace transforms}\label{sec:upper}
We start with the upper bounds, as they will determine  the correct rate(s) in the asymptotics. We have more flexibility with lower bounds, thus the  crucial step is to get a correct upper bound.
To put oneself in a proper perspective, let us recall  that for L\'{e}vy operators satisfying {\bf (B)} perturbed by a Poissonian-type potential, the decay of $L(t)$ was of order ${\rm e}^{-Ct^{\frac{d}{d+\alpha}}}.$ We obtained a similar rate for  the Anderson model for the fractional Laplacians, provided the distribution of the random variables $q_{\mathbf i}$ had an atom at zero. However,
when the atom at zero is not present, then our earlier work \cite{bib:KK-KPP-alloy-stable} indicates that an extra multiplicative input
is needed in the decay rate. As Theorems \ref{th:upper-short} and \ref{th:lower} show, this is indeed the case.

\subsection{The rate function and the statement of the upper bound (Theorem \ref{th:upper-short})} \label{sec:rate_der}

We start with the definition of the function $h(t)$ which will appear in  the rate.

Let $\alpha \in(0,2]$ and $C_1>0$ be the scaling exponent and the constant from the assumption {\bf (B)}, and let $\kappa_0$ and $M_0$ be the parameters appearing in the assumptions {\bf (Q)} and {\bf (W)}, respectively. Moreover, recall that by $\mu_2^1$ we have denoted the second eigenvalue of the operator $-\Delta^1$ (see Section \ref{sec:operators_on_tori}). Denote
 \begin{equation}\label{eq:c-zero}
	 D_0=\frac{1}{2}\frac{C_1 (\mu_2^1)^{\frac{\alpha}{2}}\|W\|_1}{\|W\|_1^2+(2M_0)^d\|W\|_2^2} > 0.
	 \end{equation}
As it will be seen below, in fact $D_0$ can be choosen to be an arbitrary constant for which
	 \begin{equation}\label{eq:D-def}
	 \|W\|_1 > \frac{D_0(2M_0)^d\|W\|_2^2}{C_1 (\mu_2^1)^{\frac{\alpha}{2}}-D_0\|W\|_1}>0,
	 \end{equation}
but for more clarity we prefer to keep $D_0$ fixed as in \eqref{eq:c-zero}.

For
 $x>0$ let
\begin{align}\label{eq:Phi}
g(x)=  \log\frac{1}{F_q(D_0/x)} = -\log \mathbb Q[q\leq D_0/x],\quad \mbox{ and } \quad  j(x)= x^{d+\alpha}g({x^\alpha}).
\end{align}
 Due to the assumption {\bf (Q)}  the function $j(x)$ is increasing, and continuous for $x\geq  x_0:= (D_0/\kappa_0)^{1/\alpha} .$ Therefore
  $j^{-1}(t)$ is well-defined for $t\geq t_0:= j(x_0).$ Let $x_t=j^{-1}(t),$ $t\geq t_0.$
Finally, denote
\begin{equation}\label{eq:def-ht}
h(t) = g(x_t^{\alpha}),\quad t\geq t_0.
\end{equation}
For later use, observe that $t$ and $x_t$ are related through the relation
\begin{equation}
\label{eq:t-and-xt}
t= x_{t}^{d+\alpha}\log\left(\frac{1}{F_q(D_0/x_t^\alpha)}\right)= x_t^{d+\alpha}h(t), \quad  t \geq t_0.
\end{equation}
Moreover, the  function $t\mapsto x_t$ is increasing  and since $
\lim_{x\to\infty}j(x)=\infty,$ we have $\lim_{t\to\infty}x_t=\infty$ as well.
  This implies that
 \begin{equation}
\label{eq:lim-h-1}
\lim_{t\to\infty}\frac{h(t)}{t} =0.
 \end{equation}
The limit $\lim_{t \to \infty} h(t)$ always exists and
\begin{align}\label{eq:lim_h}
\lim_{t \to \infty} h(t) = \begin{cases}
\infty \quad \, \ \ \ \ \ \  \text{if} \quad F_q(0) = 0, \\
\log\frac{1}{F_q(0)} \quad \text{if} \quad F_q(0) \in (0, 1).
\end{cases}
\end{align}
In Section \ref{sec:examples} we give examples of such functions $h.$

We are now ready to present the main theorem of this section.
\begin{theorem}\label{th:upper-short} Assume  {\bf (B)}, {\bf (Q)}, and {\bf (W)}.  Let $h$ be given by \eqref{eq:def-ht}.
 Then there exists $C>0$ such that
	\begin{equation}\label{eq:upper-1}
	\limsup_{t\to\infty} \frac{\log L(t)}{t^{\frac{d}{d+\alpha}} (h(t))^{\frac{\alpha}{d+\alpha}}} \leq -C.
	\end{equation}	
 In particular, when the distribution of $q$ has an atom at zero, i.e. $F_q(0) >0$, then
 	\begin{equation}\label{eq:upper-2}
 	\limsup_{t\to\infty}\frac{\log L(t)}{t^{\frac{d}{d+\alpha}}}\leq -C \left(\log\frac{1}{F_q(0)}\right)^{\frac{\alpha}{d+\alpha}} .
 	\end{equation}	
\end{theorem}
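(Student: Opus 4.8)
The plan is to bound $L(t)$ in three stages: reduce it to the ground state eigenvalue $\lambda_1^{M,V^\omega}$ of the torus operator $H^\omega_M=\Phi(-\Delta^M)+V^\omega_M$; bound that eigenvalue from below, on most configurations, via Temple's inequality; and control the exceptional configurations by a binomial large deviation estimate, finally tuning $M$ to $t$. For the first stage, write $\phi(u)=\mathbb E^{\mathbb Q}[{\rm e}^{-qu}]$; it is log-convex with $\phi(0)=1$, so $\log\phi$ is superadditive. Fixing a c\`adl\`ag path of $X$ on $[0,t]$ (which meets only finitely many translates of $\supp W$), grouping the independent factors of $\prod_{\mathbf i}\phi(\int_0^tW(X_s-\mathbf i)ds)$ by residue classes modulo $M\Z^d$ and using superadditivity inside each class gives the pathwise domination $\mathbb E^{\mathbb Q}[{\rm e}^{-\int_0^tV^\omega(X_s)ds}]\le\mathbb E^{\mathbb Q}[{\rm e}^{-\int_0^tV^\omega_M(X_s)ds}]$. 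Inserting this into the bridge expectation in \eqref{eq:IDS-formula_2}, then applying Lemma \ref{lm:rotation} and retaining only the $\mathbf k=0$ term on its right-hand side (all terms being nonnegative), yields $L(t)\le M^{-d}\,\mathbb E^{\mathbb Q}\,{\rm Tr}\,P_t^{M,V^\omega_M}$ for every $M\in\Z_+$, $t>0$. Since $V^\omega_M\ge0$, for $t\ge2$ one has ${\rm Tr}\,P_t^{M,V^\omega_M}\le {\rm e}^{-(t-1)\lambda_1^{M,V^\omega}}{\rm Tr}\,P_1^M$ and ${\rm Tr}\,P_1^M\le CM^d$ with $C$ independent of $M$ by Lemma \ref{lem:regularity}(2); hence
$$
L(t)\ \le\ C\,\mathbb E^{\mathbb Q}\big[{\rm e}^{-\frac t2\lambda_1^{M,V^\omega}}\big],\qquad M\in\Z_+,\ t\ge2.
$$

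\emph{Temple bound.} Fix $M$ large and put $\delta=D_0M^{-\alpha}$. Truncating the couplings, $\widetilde q_{\mathbf i}:=q_{\mathbf i}\wedge\delta$, only decreases the potential, so $\lambda_1^{M,V^\omega}\ge\widetilde\lambda_1$, the ground state eigenvalue of $\widetilde H=\Phi(-\Delta^M)+\widetilde V^\omega_M$. On the set $\mathcal A_{M,\delta}=\{\#\{\mathbf i\in\Z^d\cap[0,M)^d:q_{\mathbf i}>\delta\}\ge\tfrac12M^d\}$ the truncated empirical moments satisfy $\tfrac12\delta\le\overline{\widetilde q}_M\le\delta$ and $\overline{\widetilde q^{\,2}}_M\le\delta\,\overline{\widetilde q}_M$. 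I would apply Temple's inequality (Proposition \ref{prop:temple}) to $\widetilde H$ with the flat normalized trial function $\psi_1^M\equiv M^{-d/2}$ — for which the kinetic energy vanishes, $\langle\widetilde H\psi_1^M,\psi_1^M\rangle=\|W\|_1\overline{\widetilde q}_M$ and $\|\widetilde H\psi_1^M\|^2\le(2M_0)^d\|W\|_2^2\,\overline{\widetilde q^{\,2}}_M$ — together with the spectral gap bound $\lambda_2(\widetilde H)\ge\lambda_2^M=\Phi(\mu_2^1M^{-2})\ge C_1(\mu_2^1)^{\alpha/2}M^{-\alpha}$ coming from \eqref{eq:scaling_eig}, \eqref{eq:eigenvalue_subord}, \eqref{eq:assum-phi-close-to-0} (the choice $\delta=D_0M^{-\alpha}$ ensures $\langle\widetilde H\psi_1^M,\psi_1^M\rangle<\lambda_2(\widetilde H)$, so Temple applies). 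A direct computation with these data gives $\widetilde\lambda_1\ge c_0\,\overline{\widetilde q}_M$, where $c_0>0$ precisely because $D_0$ obeys \eqref{eq:D-def} — this is exactly what makes the Temple correction term strictly smaller than the leading term $\|W\|_1\overline{\widetilde q}_M$. Thus $\lambda_1^{M,V^\omega}\ge c_0'M^{-\alpha}$ on $\mathcal A_{M,\delta}$, which is the content of Lemma \ref{lem:lambda-on-a-delta}.

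\emph{Exceptional set and optimization.} On $\mathcal A_{M,\delta}^c=\{\#\{\mathbf i\in[0,M)^d:q_{\mathbf i}\le\delta\}>\tfrac12M^d\}$, since $F_q(\delta)\to F_q(0)<1$, the Bernstein-type bound of Lemma \ref{lem:bernoulli} gives $\mathbb Q[\mathcal A_{M,\delta}^c]\le{\rm e}^{-c_1M^d\log(1/F_q(\delta))}={\rm e}^{-c_1M^dg(M^\alpha)}$ for $M$ large, with $g$ as in \eqref{eq:Phi}. Splitting $\mathbb E^{\mathbb Q}[{\rm e}^{-\frac t2\lambda_1^{M,V^\omega}}]$ over $\mathcal A_{M,\delta}$ and its complement and bounding ${\rm e}^{-\frac t2\lambda_1^{M,V^\omega}}\le1$ on the latter,
$$
L(t)\ \le\ C\big({\rm e}^{-c\,t\,M^{-\alpha}}+{\rm e}^{-c_1M^dg(M^\alpha)}\big),\qquad M\in\Z_+\text{ large.}
$$
With $j(x)=x^{d+\alpha}g(x^\alpha)$, $x_t=j^{-1}(t)$, $h(t)=g(x_t^\alpha)$, the identity \eqref{eq:t-and-xt} gives $t\,x_t^{-\alpha}=x_t^dh(t)=t^{d/(d+\alpha)}h(t)^{\alpha/(d+\alpha)}$. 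Choosing an integer $M=M(t)\in[x_t,2^{1/\alpha}x_t]$ (possible for $t$ large, as $x_t\to\infty$), both exponents are $\ge\tfrac12x_t^dh(t)$ — the first since $tM^{-\alpha}\ge\tfrac12t\,x_t^{-\alpha}$, the second since $M^dg(M^\alpha)\ge x_t^dg(x_t^\alpha)$ as $g$ is nondecreasing. Hence $L(t)\le 2C\,{\rm e}^{-c_2x_t^dh(t)}$, which, since $x_t^dh(t)\to\infty$, proves \eqref{eq:upper-1}. When $F_q(0)>0$, \eqref{eq:lim_h} gives $h(t)\to\log\tfrac1{F_q(0)}\in(0,\infty)$, so \eqref{eq:upper-1} immediately yields \eqref{eq:upper-2}.

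\emph{The main obstacle} is the Temple step: it must be run with the flat trial function so that the kinetic energy drops out and the gap $\lambda_2^M\asymp M^{-\alpha}$ becomes available; the truncation at the natural scale $M^{-\alpha}$ is indispensable for controlling the second moment of the couplings; and the relation \eqref{eq:D-def} fixing $D_0$ is exactly what keeps the correction term strictly dominated. A secondary point requiring care is carrying out the reduction to the torus uniformly in $M$, so that $M$ may afterwards be tuned to $t$.
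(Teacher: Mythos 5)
Your proposal follows the same three-stage architecture as the paper's proof — reduction to the torus via periodization and Lemma~\ref{lm:rotation}, a Temple lower bound for $\lambda_1^{M,V_M^\omega}$ on a set $\mathcal A_{M,\delta}$ of ``good'' configurations combined with a Bernstein bound on $\mathbb Q[\mathcal A_{M,\delta}^c]$, and then a choice of $M=M(t)\asymp x_t$ to balance the two exponents. The self-contained justification of the periodization inequality via log-convexity/superadditivity of $u\mapsto\log\mathbb E^{\mathbb Q}[{\rm e}^{-qu}]$ is a nice touch (the paper instead cites a lemma from an earlier work), and the rest of your computations — the use of the flat trial function, the spectral gap $\lambda_2^M\gtrsim M^{-\alpha}$, the truncation at level $D_0M^{-\alpha}$, the role of condition \eqref{eq:D-def}, and the optimization yielding $x_t^dh(t)=t^{d/(d+\alpha)}h(t)^{\alpha/(d+\alpha)}$ — all match the paper.

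There is, however, one genuine gap: you hardcode the fraction $\tfrac12$ in
$\mathcal A_{M,\delta}=\{\#\{\mathbf i\in[0,M)^d: q_{\mathbf i}>\delta\}\ge\tfrac12 M^d\}$,
and justify the Bernstein bound on $\mathcal A_{M,\delta}^c$ by appealing only to $F_q(\delta)\to F_q(0)<1$. This is not sufficient. Lemma~\ref{lem:bernoulli} requires the fraction $\gamma$ to exceed $p_M=F_q(D_0/M^\alpha)$, so with $\gamma=\tfrac12$ you need $F_q(0)<\tfrac12$. Assumption {\bf (Q)} only guarantees $F_q(0)<1$; if, say, $F_q(0)=3/4$, then on the order of $\tfrac34 M^d$ of the $q_{\mathbf i}$'s fall below $\delta$ with high probability, so $\mathbb Q[\mathcal A_{M,\delta}^c]\to 1$ rather than decaying, and your two-term bound on $L(t)$ becomes vacuous. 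The fix is exactly what the paper does: introduce a free parameter $\delta_0\in(0,1)$ for the fraction (so that $\mathcal A_{M,\delta_0}=\{\#\{q_{\mathbf i}>D_0/M^\alpha\}\ge\delta_0 M^d\}$) and then choose $\delta_0$ small enough — in particular $\delta_0<1-F_q(0)$, and additionally so small that the algebraic prefactor $\frac{1}{1-\delta_0}(1/\delta_0)^{\delta_0/(1-\delta_0)}$ is close enough to $1$ — so that Lemma~\ref{lem:bernoulli} applies with $\gamma=1-\delta_0>p_M$ for all large $M$ and still yields an exponential bound of order ${\rm e}^{-cM^d\log(1/p_M)}$. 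The Temple step survives this change unchanged, with the final lower bound for $\lambda_1^{M,V_M^\omega}$ simply picking up a factor $\delta_0$ in the constant. Once this fraction is chosen appropriately, the remainder of your argument goes through and matches the paper's proof.
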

The proof of the theorem is split into three  parts which are presented in Sections \ref{sec:trace}, \ref{sec:temple} and
\ref{sec:conclusion} below.

\subsection{Preparatory steps in the proof of Theorem \ref{th:upper-short} - the trace estimate}\label{sec:trace}

	To begin the proof, we proceed as in \cite[Proof of Theorem 4.1, the first page of Section 5.3]{bib:KK-KPP-alloy-stable}.
	As a corollary of \cite[Lemma 5.1]{bib:KK-KPP-alloy-stable}
	we get that for any given $M\in\Z_+$ and any $t>0$ we have the following relation between the exponential functionals of the  subordinate Brownian motion $X = (X_t)_{t \geq 0}$ in $\R^d$ with the un-periodized and periodized potentials  (cf.\ \eqref{eq:alloy}-\eqref{eq:szn-per-of-V}):
	\[\mathbb E^\mathbb Q\left[{\rm e}^{-\int_0^t V^\omega(X_s(w)){\rm d}s}\right] \leq  \mathbb E^\mathbb Q\left[{\rm e}^{-\int_0^t V_M^\omega(X_s(w)){\rm d}s}\right],\]
	and further, invoking Lemma \ref{lm:rotation},
 	\begin{equation}\label{eq:old-argument}
	L(t)\leq \frac{1}{M^d}
	\mathbb E^{\mathbb Q}\int_{\mathcal T_M} p^M_t(x,x)\mathbf E_{x,x}^{M,t}\left[{\rm e}^{-\int_0^t V_M^{\omega}(X_s^M){\rm d}s}\right]{\rm d}x
	\end{equation}
	(recall that $X^M=(X_s^M)_{s\geq 0}$ is the subordinate Brownian motion on the torus $\mathcal T_M,$  $p_t^M(\cdot,\cdot)$ are its transition densities, and $\mathbf E_{x,x}^{M,t}$ - its bridge measures).
	
	The integral at the right-hand side of \eqref{eq:old-argument} is the trace of the  random
	operator
	$P_t^{M,V^{\omega}_M}$ with integral kernel defined in \eqref{eq:FK-torus}.
	Consequently,  for $t>1$,
	\begin{eqnarray*}L(t)&\leq &\frac{1}{M^d} \mathbb E^{\mathbb Q}\mbox{Tr}\,P_t^{M,V^{\omega}_M}
		= \frac{1}{M^d} \mathbb E^{\mathbb Q}\sum_{n=1}^\infty {\rm e}^{-t\lambda_n^{{M, V^{\omega}_M}}}
		\\
		&\leq & \frac{1}{M^d} \mathbb E^{\mathbb Q}\left[{\rm e}^{-(t-1)\lambda_1^{M, V^{\omega}_M}}\mbox{Tr}\,P_1^{M,V^{\omega}_M}\right]
		\\
		&\leq &
		\mathbb E^{\mathbb Q}\left[{\rm e}^{-(t-1)\lambda_1^{M, V^{\omega}_M}}\right]  \frac{1}{M^d}  \int_{\mathcal T_M} p^M_1(x,x){\rm d}x.
	\end{eqnarray*}
From Lemma \ref{lem:regularity}\,(2) there exists a constant $C>0$ independent of $M$ for which $p^M_1(x,x)\leq C,$ $x\in \mathcal T_M,$
	so that we are led to the bound
	\begin{equation}\label{eq:stop}
	L(t)\leq C
	\mathbb E^{\mathbb Q}\left[{\rm e}^{-(t-1)\lambda_1^{M, V^{\omega}_M}}\right], \quad t > 1.
	\end{equation}

	\subsection{Temple's inequality and the lower scaling of the ground state eigenvalue} \label{sec:temple}
	 In this section we find an appropriate lower estimate for the ground state eigenvalue $\lambda_1^{M,V^{\omega}_M}$ of the Schr\"odinger operator $H^{\omega}_M$. We will use the following inequality.
	
	\begin{proposition}[{Temple's inequality, \cite[Theorem XIII.5]{bib:RS}}] \label{prop:temple} Suppose $H$ is a self-adjoint operator  on a Hilbert space with inner product $\langle \cdot,\cdot \rangle$ such that $\lambda_1:=\inf\sigma(H)$ is an isolated eigenvalue and let $\mu\leq \inf(\sigma(H)\setminus \{\lambda_1\}).$  Then for any $\psi\in\mathcal D(H)$ which satisfies
	\begin{equation}\label{eq:temple-condition}
	\langle \psi, H\psi\rangle <\mu\quad\mbox{ and } \quad  \|\psi\|=1
	\end{equation}
	the following bound holds:
	\begin{equation}\label{eq:temple}
	\lambda_1\geq \langle \psi, H\psi\rangle - \frac{\langle H\psi, H\psi\rangle-\langle \psi,H\psi\rangle^2}{\mu-\langle \psi, H\psi\rangle}.
	\end{equation}
\end{proposition}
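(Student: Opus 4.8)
The plan is to deduce Temple's inequality directly from the spectral theorem applied to the quadratic form associated with the auxiliary operator $(H-\lambda_1)(H-\mu)$. First I would record that the two conditions in \eqref{eq:temple-condition} force $\lambda_1<\mu$: since $\|\psi\|=1$ and $\lambda_1=\inf\sigma(H)$ we have $H\geq\lambda_1 I$, hence $\langle\psi,H\psi\rangle\geq\lambda_1$, and combining this with $\langle\psi,H\psi\rangle<\mu$ gives $\lambda_1<\mu$; in particular $\mu-\langle\psi,H\psi\rangle>0$ is the positive quantity we will eventually divide by. Next, because $\lambda_1$ is an \emph{isolated} point of $\sigma(H)$ and $\mu\leq\inf(\sigma(H)\setminus\{\lambda_1\})$, the real quadratic $s\mapsto(s-\lambda_1)(s-\mu)$ is nonnegative on the whole spectrum: it vanishes at $s=\lambda_1$, while for every other $s\in\sigma(H)$ both factors $s-\lambda_1$ and $s-\mu$ are nonnegative.

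Writing $E$ for the projection-valued spectral measure of $H$ and abbreviating $a:=\langle\psi,H\psi\rangle$ and $b:=\langle H\psi,H\psi\rangle=\|H\psi\|^2$, I would then integrate the pointwise inequality $(s-\lambda_1)(s-\mu)\geq0$ against the positive finite measure $\langle\psi,E_{{\rm d}s}\psi\rangle$. Expanding the integrand as $s^2-(\lambda_1+\mu)s+\lambda_1\mu$ and using $\int s^2\,\langle\psi,E_{{\rm d}s}\psi\rangle=\|H\psi\|^2=b$, $\int s\,\langle\psi,E_{{\rm d}s}\psi\rangle=a$ and $\|\psi\|=1$, this yields $b-(\lambda_1+\mu)a+\lambda_1\mu\geq0$, equivalently $b-\mu a\geq\lambda_1(a-\mu)$. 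Dividing by $a-\mu<0$ reverses the inequality and produces $\lambda_1\geq(\mu a-b)/(\mu-a)$; finally the elementary identity
$$
\frac{\mu a-b}{\mu-a}=a-\frac{b-a^2}{\mu-a}
$$
rewrites the right-hand side in precisely the form displayed in \eqref{eq:temple}, completing the argument.

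The one point requiring a little care is that $\psi$ is assumed only to lie in $\mathcal D(H)$ and not in $\mathcal D(H^2)$, so $(H-\lambda_1)(H-\mu)\psi$ need not be defined as a vector; the argument above sidesteps this by working throughout with the scalar spectral integral $\int(s-\lambda_1)(s-\mu)\,\langle\psi,E_{{\rm d}s}\psi\rangle$, which converges absolutely exactly because $\int s^2\,\langle\psi,E_{{\rm d}s}\psi\rangle=\|H\psi\|^2<\infty$ when $\psi\in\mathcal D(H)$. Beyond this observation the proof is purely computational and presents no genuine obstacle, which is why the statement is simply quoted from \cite{bib:RS}.
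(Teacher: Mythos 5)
Your proof is correct and is, in substance, the standard argument from Reed--Simon that the paper cites without reproducing: one observes that $(s-\lambda_1)(s-\mu)\geq0$ on $\sigma(H)$ because $\lambda_1$ is isolated and $\mu$ lies at or below the rest of the spectrum, integrates this against the spectral measure $\langle\psi,E_{{\rm d}s}\psi\rangle$, and rearranges. Your remark about $\psi\in\mathcal D(H)$ rather than $\mathcal D(H^2)$ is the right technical point, and working with the scalar spectral integral $\int(s-\lambda_1)(s-\mu)\,\langle\psi,E_{{\rm d}s}\psi\rangle$ (finite since $\int s^{2}\,\langle\psi,E_{{\rm d}s}\psi\rangle=\|H\psi\|^{2}<\infty$) handles it cleanly; the algebra, including the identity $(\mu a-b)/(\mu-a)=a-(b-a^{2})/(\mu-a)$ and the sign of $a-\mu$, all checks out.
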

	
	\bigskip

	For given $M\in\mathbb Z_+, $
	 consider truncated random variables
	\begin{align}\label{eq:trunc_qs}
	 \widetilde{q}_{\mathbf i}=q_{\mathbf i}\wedge \frac{D_0}{M^{\alpha}},\quad \mathbf i\in\mathbb Z^d,
	\end{align}
	 and random Schr\"odinger operators
	 \begin{equation}\label{eq:tilde-H}
	 \widetilde{H}^\omega_M= \Phi(-\Delta^M)+\widetilde V_M^\omega,
 	\end{equation}
	 where $\widetilde {V}_M^\omega$ is the Sznitman-periodization of
	 \[\widetilde{V}^\omega(x)=\sum_{\mathbf i \in \Z^d}\widetilde{q}_{\mathbf i}(\omega) W(x-\mathbf i),\]
	 cf.\ \eqref{eq:alloy}-\eqref{eq:szn-per-of-V}.  We have a lemma.
	
	 \begin{lemma}\label{lem:lambda} Let the assumptions {\bf (B)} and {\bf (W)} hold. Then for any $M\geq M_0$
	 	and any constant $D_0>0$ satisfying \eqref{eq:D-def} we have
	 	\begin{equation}\label{eq:lambda-temple}
	 	\lambda_1^{M, {V}_M^\omega} \geq\lambda_1^{M, \widetilde {V}_M^\omega} \geq \frac{1}{M^d}\left[\int_{\mathcal T_M} \widetilde{
	 		V}_M^{\omega}(x){\rm d}x - \frac{\int_{\mathcal T_M}(\widetilde{V}_M^{\omega}(x))^2{\rm d}x}{\big(C_1 (\mu_2^1)^{\frac{\alpha}{2}}-D_0\|W\|_1\big)\cdot M^{-\alpha}}\right].
	 	\end{equation}
	 \end{lemma}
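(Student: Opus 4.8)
The plan is to split the proof into the easy first inequality, obtained by comparing quadratic forms, and the substantive second inequality, obtained by applying Temple's inequality (Proposition~\ref{prop:temple}) with the flat ground state of $\Phi(-\Delta^M)$ as trial function. For the first inequality, note that $\widetilde q_{\mathbf i}=q_{\mathbf i}\wedge D_0 M^{-\alpha}\le q_{\mathbf i}$, so $\widetilde V_M^\omega\le V_M^\omega$ pointwise, hence $\langle\phi,\widetilde H_M^\omega\phi\rangle\le\langle\phi,H_M^\omega\phi\rangle$ for every $\phi$ in the (common, by Proposition~\ref{prop:Kato_regularity}) form domain; the min--max principle then gives $\lambda_1^{M,V_M^\omega}\ge\lambda_1^{M,\widetilde V_M^\omega}$.

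For the second inequality I would take $H=\widetilde H_M^\omega$, the trial function $\psi=\psi_1^M\equiv M^{-d/2}$ (the normalized constant function, which is the ground state of $\Phi(-\Delta^M)$ with eigenvalue $0$, so $\Phi(-\Delta^M)\psi_1^M=0$), and $\mu=C_1(\mu_2^1)^{\alpha/2}M^{-\alpha}$. Since $\psi_1^M$ belongs to the form domain and $\widetilde V_M^\omega\in L^2(\mathcal T_M)$ (the truncated coefficients are bounded and $W\in L^2$ has compact support, so only finitely many translates contribute on $\mathcal T_M$), one checks that $\psi_1^M\in\mathcal D(\widetilde H_M^\omega)$ with $\widetilde H_M^\omega\psi_1^M=M^{-d/2}\widetilde V_M^\omega$. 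Therefore, using $\Phi(-\Delta^M)\psi_1^M=0$,
$$a:=\langle\psi_1^M,\widetilde H_M^\omega\psi_1^M\rangle=\frac{1}{M^d}\int_{\mathcal T_M}\widetilde V_M^\omega(x)\,{\rm d}x\et b:=\|\widetilde H_M^\omega\psi_1^M\|^2=\frac{1}{M^d}\int_{\mathcal T_M}\big(\widetilde V_M^\omega(x)\big)^2\,{\rm d}x,$$
and unfolding the Sznitman periodization \eqref{eq:szn-per-of-V} yields the identity $\int_{\mathcal T_M}\widetilde V_M^\omega=\|W\|_1\sum_{\mathbf i\in[0,M)^d\cap\Z^d}\widetilde q_{\mathbf i}$; since each $\widetilde q_{\mathbf i}\le D_0 M^{-\alpha}$ and there are $M^d$ such lattice points, this gives $a\le D_0\|W\|_1 M^{-\alpha}$.

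It remains to verify the hypotheses of Proposition~\ref{prop:temple} and to simplify its conclusion. Because $\widetilde V_M^\omega\ge 0$, the variational principle gives $\lambda_2^{M,\widetilde V_M^\omega}\ge\lambda_2^M=\Phi(\mu_2^M)$, and by the scaling \eqref{eq:scaling_eig} together with \eqref{eq:assum-phi-close-to-0} (after shrinking $C_1$ if needed so that the lower scaling bound holds on $(0,M_0^{-2}\mu_2^1]$, which is harmless and consistent with the choice of $D_0$) one has $\Phi(\mu_2^M)=\Phi(M^{-2}\mu_2^1)\ge C_1(\mu_2^1)^{\alpha/2}M^{-\alpha}=\mu$, so $\mu\le\inf\big(\sigma(\widetilde H_M^\omega)\setminus\{\lambda_1^{M,\widetilde V_M^\omega}\}\big)$. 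The condition \eqref{eq:temple-condition} holds since $\|\psi_1^M\|=1$ and $a\le D_0\|W\|_1 M^{-\alpha}<C_1(\mu_2^1)^{\alpha/2}M^{-\alpha}=\mu$, the strict inequality being exactly the positivity of $C_1(\mu_2^1)^{\alpha/2}-D_0\|W\|_1$, which is forced by \eqref{eq:D-def}; moreover $\mu-a\ge\big(C_1(\mu_2^1)^{\alpha/2}-D_0\|W\|_1\big)M^{-\alpha}>0$. Temple's inequality \eqref{eq:temple} then gives $\lambda_1^{M,\widetilde V_M^\omega}\ge a-\frac{b-a^2}{\mu-a}$; since $b-a^2\le b$ and $b\ge 0$, this is at least $a-\frac{b}{\mu-a}$, and by the lower bound on $\mu-a$ it is at least $a-\frac{b}{(C_1(\mu_2^1)^{\alpha/2}-D_0\|W\|_1)M^{-\alpha}}$, which is precisely \eqref{eq:lambda-temple} after writing $a,b$ back as integrals and factoring out $M^{-d}$.

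Overall the computation is routine once the trial function is fixed. The points needing a little care are the admissibility $\psi_1^M\in\mathcal D(\widetilde H_M^\omega)$ when $W$ is merely a singular Kato-class profile (handled via $\widetilde V_M^\omega\in L^2(\mathcal T_M)$ and the characterization of the operator domain through the form), the reduction to the scaling bound for small $M$ (handled by shrinking $C_1$), and the sign bookkeeping in the final chain of estimates --- none of which is a genuine obstacle; the only "conceptual" input is the idea of feeding the flat eigenfunction into Temple's inequality.
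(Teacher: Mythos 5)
Your proof is correct and follows essentially the same route as the paper: apply Temple's inequality to $\widetilde H_M^\omega$ with the flat ground state $\psi_1^M\equiv M^{-d/2}$ as trial function, compute $a=M^{-d}\int\widetilde V_M^\omega$ and $b=M^{-d}\int(\widetilde V_M^\omega)^2$, bound $a\le D_0\|W\|_1 M^{-\alpha}<C_1(\mu_2^1)^{\alpha/2}M^{-\alpha}\le\lambda_2^M$, and then drop the $-a^2$ term in the numerator and lower-bound $\mu-a$ by $(C_1(\mu_2^1)^{\alpha/2}-D_0\|W\|_1)M^{-\alpha}$. The paper takes $\mu=\lambda_2^M$ whereas you take $\mu$ equal to its lower bound $C_1(\mu_2^1)^{\alpha/2}M^{-\alpha}$, which is inessential since only that lower bound is used in the end; you are also somewhat more careful than the paper in verifying $\psi_1^M\in\mathcal D(\widetilde H_M^\omega)$ (via $\widetilde V_M^\omega\in L^2(\mathcal T_M)$) and in flagging that $C_1$ may need to be shrunk so the lower scaling bound in \eqref{eq:assum-phi-close-to-0} applies at $\lambda=M_0^{-2}\mu_2^1$, points the paper passes over silently.
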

	 \begin{proof}
	 	Let $M\geq M_0$ be fixed. Since $\widetilde{V}^\omega_M\leq V_M^\omega,$ the leftmost inequality in \eqref{eq:lambda-temple} is clear.
We now apply Temple's inequality  to the operator $
\widetilde{H}^\omega_M$  acting in $L^2(\mathcal T_M)$, defined in \eqref{eq:tilde-H}, and $\mu=\lambda_{2}^M.$
 The spectrum of $\widetilde{H}^\omega_M$ is purely discrete  and it is clear that
\[\mu \leq \lambda_2^{M, \widetilde {V}_M^\omega}= \inf \left(\sigma(\widetilde {H}^\omega_M) \setminus \left\{\lambda_1^{M, \widetilde {V}_M^\omega}\right\}\right).\]
Let  $\psi = \psi_1^M \equiv \frac{1}{M^{d/2}}$. We have $\|\psi\|_2=1$ and,  by \eqref{eq:eigenvalue_subord}-\eqref{eq:eigenvalue_deltaM},
$\Phi(-\Delta^M) \psi=0$. Consequently,
$$
	\langle \psi, \widetilde{H}^\omega_M\psi\rangle =  \langle \psi, \Phi(-\Delta^M)\psi\rangle  + \langle \psi, \widetilde{V}_M^{\omega}\psi\rangle\\
	=\langle \psi, \widetilde{V}_M^{\omega}\psi\rangle= \frac{1}{M^d}\int_{\mathcal T_M} \widetilde{V}_M^{\omega}(x)\,{\rm d}x.
$$	
By the definition of $\widetilde{V}_M^{\omega}$, we have
\begin{align} \label{eq:per_int}
\int_{\mathcal T_M} \widetilde{V}_M^{\omega}(x)\,{\rm d}x & = \int_{\mathcal T_M} \sum_{\mathbf i\in [0,M)^d} \widetilde{q}_{\mathbf i}(\omega)\left(\sum_{\mathbf i'\in\pi_M^{-1}(\mathbf i)} W(x-\mathbf i')\right){\rm d}x\nonumber \\
	&=\sum_{\mathbf i\in [0,M)^d} \widetilde{q}_{\mathbf i}(\omega)\left(\sum_{\mathbf i'\in\pi_M^{-1}(\mathbf i)} \int_{[0,M)^d -\mathbf i'} W(x)\,{\rm d}x\right) = \|W\|_1  \left(\sum_{\mathbf i\in [0,M)^d} \widetilde{q}_{\mathbf i}(\omega)\right). 
\end{align}
Hence, by \eqref{eq:trunc_qs} and \eqref{eq:c-zero},
$$
\langle \psi, \widetilde{H}^\omega_M\psi\rangle \leq \frac{1}{M^d}\,\|W\|_1\cdot M^d\cdot \frac{D_0}{M^\alpha}= \frac{D_0\|W\|_1}{M^\alpha}
< \frac{C_1 (\mu_2^1)^{\frac{\alpha}{2}}}{M^\alpha}.
$$
On the other hand,  from a combination of the lower bound in \eqref{eq:assum-phi-close-to-0} and \eqref{eq:scaling_eig}-\eqref{eq:eigenvalue_subord} it follows  that
$$
\frac{C_1 (\mu_2^1)^{\frac{\alpha}{2}}}{M^\alpha} \leq \Phi(M^{-2} \mu_2^1) = \Phi(\mu_2^M) = \lambda_{2}^M,
$$
and therefore condition \eqref{eq:temple-condition} is satisfied. For the ingredients of \eqref{eq:temple} we have:
\begin{eqnarray*}
	\langle \psi,\widetilde{H}^\omega\psi\rangle &=& \frac{1}{M^d}
	\int_{\mathcal T_M}\widetilde {V}_M^\omega(x)\,{\rm d}x,\\
	\langle \widetilde{H}^\omega\psi,\widetilde{H}^\omega\psi\rangle &=&
\frac{1}{M^d}
\int_{\mathcal T_M}\left(\widetilde {V}_M^\omega(x)\right)^2{\rm d}x,\\
\mu-\langle \psi,\widetilde{H}^\omega\psi\rangle &=& \lambda_2^M- \langle \psi,\widetilde{H}^\omega\psi\rangle
\geq  \frac{C_1 (\mu_2^1)^{\frac{\alpha}{2}}}{M^\alpha}-\frac{D_0\|W\|_1}{M^\alpha} = \frac{C_1 (\mu_2^1)^{\frac{\alpha}{2}}-D_0\|W\|_1}{M^\alpha}.
\end{eqnarray*}
Inserting these inside \eqref{eq:temple} we get
\begin{eqnarray*}
	\lambda_1^{M, {V}_M^\omega}&\geq & 	 \lambda_1^{M, \widetilde {V}_M^\omega}\\
	&\geq& \frac{1}{M^d} \int_{\mathcal T_M}
	\widetilde{V}_M^{\omega}(x)\,{\rm d}x - \frac{\frac{1}{M^d}  \int_{\mathcal T_M}
		\left(\widetilde{V}_M^{\omega}(x)\right)^2\,{\rm d}x -\left(\frac{1}{M^d} \int_{\mathcal T_M}
		\widetilde{V}_M^{\omega}(x)\,{\rm d}x\right)^2}{(C_1 (\mu_2^1)^{\frac{\alpha}{2}}-D_0\|W\|_1) M^{-\alpha}}\\
	&\geq& \frac{1}{M^d}\left[  \int_{\mathcal T_M}
	\widetilde{V}_M^{\omega}(x)\,{\rm d}x-\frac{1}{(C_1 (\mu_2^1)^{\frac{\alpha}{2}}-D_0\|W\|_1)M^{-\alpha}}  \int_{\mathcal T_M}
	\left(\widetilde{V}_M^{\omega}(x)\right)^2\,{\rm d}x\right],
\end{eqnarray*}
which is the desired statement.	
\end{proof}
This lemma will be useful when there is a lot of randomness in the picture, namely when the random variables $q_{\mathbf i}$ are bigger than $D_0/ M^{\alpha}$ on a substantial part of sites in $\mathcal T_M.$ To quantify this behavior, fix $\delta\in(0,1)$ (its actual value will be decided later) and consider the
set
\begin{equation}\label{eq:a-delta-def}
\mathcal A_{M,\delta}=\left\{\omega:\#\left\{\mathbf i\in [0,M)^d: q_{\mathbf i}(\omega)  >  \frac{D_0}{M^\alpha}\right\}\geq  \delta M^d  \right\}.
\end{equation}
We have the following estimate.
\begin{lemma}\label{lem:lambda-on-a-delta}  Let the assumptions {\bf (B)} and {\bf (W)} hold and  let $\delta>0$ be fixed.
	Suppose $\omega\in\mathcal A_{M,\delta}.$ Then for any  $M \geq M_0$
	\begin{equation}\label{eq:lambda-on-a-delta}
	\lambda_1^{M, {V}_M^\omega} \geq D_0\cdot\delta \left[\|W\|_1-\frac{(2M_0)^dD_0\|W\|_2^2}{C_1 (\mu_2^1)^{\frac{\alpha}{2}}-D_0\|W\|_1}\right]\cdot\frac{1}{M^\alpha}.
	\end{equation}
\end{lemma}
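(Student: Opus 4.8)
The plan is to combine the Temple-type lower bound from Lemma~\ref{lem:lambda} with the counting information built into the event $\mathcal A_{M,\delta}$ and the compact support of $W$. First I would fix $M\geq M_0$ and $\omega\in\mathcal A_{M,\delta}$, and rewrite the two integrals appearing on the right-hand side of \eqref{eq:lambda-temple} in terms of the truncated lattice variables $\widetilde q_{\mathbf i}=q_{\mathbf i}\wedge(D_0/M^\alpha)$. For the first integral, \eqref{eq:per_int} already gives $\int_{\mathcal T_M}\widetilde V_M^\omega(x)\,{\rm d}x=\|W\|_1\sum_{\mathbf i\in[0,M)^d}\widetilde q_{\mathbf i}(\omega)$. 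Now on the event $\mathcal A_{M,\delta}$ there are at least $\delta M^d$ indices $\mathbf i\in[0,M)^d$ with $q_{\mathbf i}(\omega)>D_0/M^\alpha$, and for each such index the truncation forces $\widetilde q_{\mathbf i}(\omega)=D_0/M^\alpha$; hence
\[
\sum_{\mathbf i\in[0,M)^d}\widetilde q_{\mathbf i}(\omega)\geq \delta M^d\cdot\frac{D_0}{M^\alpha},
\qquad\text{so}\qquad
\int_{\mathcal T_M}\widetilde V_M^\omega(x)\,{\rm d}x\geq \|W\|_1\,\delta\,D_0\,M^{d-\alpha}.
\]

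Next I would bound the quadratic term $\int_{\mathcal T_M}(\widetilde V_M^\omega(x))^2\,{\rm d}x$ from above. The key point is that $0\leq\widetilde q_{\mathbf i}\leq D_0/M^\alpha$ uniformly, and that $\widetilde V_M^\omega$ is, on $\mathcal T_M$, a sum of the shifted profiles $W(\cdot-\mathbf i)$ with coefficients bounded by $D_0/M^\alpha$. Since $\supp W\subset[-M_0,M_0]^d$, at any point $x$ only a bounded number of these shifts — at most $(2M_0)^d$ of them, once $M\geq M_0$ so that the periodization does not create extra overlaps — can be nonzero simultaneously. Using this bounded-overlap property together with $(\sum_{j=1}^N a_j)^2\leq N\sum_{j=1}^N a_j^2$ and the uniform bound on the coefficients, I expect to obtain
\[
\int_{\mathcal T_M}\big(\widetilde V_M^\omega(x)\big)^2\,{\rm d}x
\leq (2M_0)^d\Big(\frac{D_0}{M^\alpha}\Big)^2 \cdot M^d\,\|W\|_2^2
= (2M_0)^d\,D_0^2\,\|W\|_2^2\,M^{d-2\alpha},
\]
where the extra factor $M^d$ comes from integrating over the $M^d$ unit cells of $\mathcal T_M$ (equivalently, from $\|W(\cdot-\mathbf i)\|_2^2=\|W\|_2^2$ summed over the relevant $\mathbf i$'s, regrouped via the bounded-overlap bound). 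I should be a little careful here to present this estimate cleanly; the cleanest route is probably to write $\widetilde V_M^\omega(x)^2\leq \frac{D_0}{M^\alpha}\,\widetilde V_M^\omega(x)\cdot(\text{number of active shifts at }x)\leq \frac{D_0}{M^\alpha}(2M_0)^d\,\widetilde V_M^\omega(x)\cdot\|W\|_\infty$-type bounds — but since $W$ need not be bounded under {\bf (W)}, I would instead stay with the Cauchy--Schwarz/bounded-overlap argument above, which only uses $W\in L^2$.

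Finally I would substitute these two estimates into \eqref{eq:lambda-temple}. The $\frac1{M^d}$ prefactor and the powers of $M$ combine so that
\[
\lambda_1^{M,V_M^\omega}
\geq \frac{1}{M^d}\left[\|W\|_1\,\delta\,D_0\,M^{d-\alpha}
-\frac{(2M_0)^d D_0^2\|W\|_2^2\,M^{d-2\alpha}}{\big(C_1(\mu_2^1)^{\alpha/2}-D_0\|W\|_1\big)M^{-\alpha}}\right]
= D_0\,\delta\left[\|W\|_1-\frac{(2M_0)^d D_0\|W\|_2^2}{C_1(\mu_2^1)^{\alpha/2}-D_0\|W\|_1}\right]\frac{1}{M^\alpha},
\]
where I used that $\delta\leq 1$ to replace the $\delta$ in front of the quadratic correction term by $1$ (making the bracket smaller, hence the inequality still valid), which is exactly \eqref{eq:lambda-on-a-delta}. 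Note the bracket is positive precisely because of the standing requirement \eqref{eq:D-def} on $D_0$, which is what makes the statement nonvacuous. The main obstacle, and the only step requiring genuine care rather than bookkeeping, is the upper bound on $\int_{\mathcal T_M}(\widetilde V_M^\omega)^2$: one must exploit the compact support of $W$ and the fact that $M\geq M_0$ to control the number of overlapping translates on the torus, and do so using only $W\in L^2$ (not $W\in L^\infty$), so that singular single-site potentials as in Example~\ref{ex:singular} are still covered.
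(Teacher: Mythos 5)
Your treatment of the two integrals is sound: the lower bound $\int_{\mathcal T_M}\widetilde V_M^\omega\geq\|W\|_1\,\delta M^d\cdot D_0/M^\alpha$ is exactly right, and the bounded-overlap/Cauchy--Schwarz bound for $\int_{\mathcal T_M}(\widetilde V_M^\omega)^2$ is exactly what the paper also uses (and, as you note, it relies only on $W\in L^2$, so singular $W$ are covered). The error is in the last algebraic step. Plugging your two crude estimates into \eqref{eq:lambda-temple} gives
\[
\lambda_1^{M,V_M^\omega}\ \geq\ \frac{D_0}{M^\alpha}\left[\,\delta\|W\|_1-\frac{(2M_0)^dD_0\|W\|_2^2}{C_1(\mu_2^1)^{\alpha/2}-D_0\|W\|_1}\,\right],
\]
whereas the lemma asserts the strictly larger quantity
$\frac{D_0\delta}{M^\alpha}\left[\|W\|_1-\frac{(2M_0)^dD_0\|W\|_2^2}{C_1(\mu_2^1)^{\alpha/2}-D_0\|W\|_1}\right]=\frac{D_0}{M^\alpha}\left[\delta\|W\|_1-\delta Q\right]$ with $Q\geq 0$ and $\delta Q\leq Q$. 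Your remark about ``$\delta\leq 1$'' describes going from the lemma's bound to yours, not the other way around; you cannot use it to promote the weaker bound you have to the stronger one you need. In fact your bound becomes negative, hence vacuous, as soon as $\delta\|W\|_1<Q$, while the lemma must remain a positive, nontrivial bound for every $\delta\in(0,1)$ — and indeed the proof in Section~5.3 does need $\delta=\delta_0$ potentially small (determined by $F_q(0)$), so this gap is not cosmetic.

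The repair is to avoid the crude term-by-term bound $\sum_{\mathbf i}\widetilde q_{\mathbf i}^2\leq M^d(D_0/M^\alpha)^2$. Since $0\leq\widetilde q_{\mathbf i}\leq D_0/M^\alpha$ pointwise, one has the sharper
\[
\sum_{\mathbf i\in[0,M)^d}\widetilde q_{\mathbf i}^2\ \leq\ \frac{D_0}{M^\alpha}\sum_{\mathbf i\in[0,M)^d}\widetilde q_{\mathbf i},
\]
which makes the quadratic correction in \eqref{eq:lambda-temple} proportional to $\sum_{\mathbf i}\widetilde q_{\mathbf i}$ as well. Factoring $\frac{1}{M^d}\sum_{\mathbf i}\widetilde q_{\mathbf i}$ out of the whole bracket yields
\[
\lambda_1^{M,V_M^\omega}\ \geq\ \frac{1}{M^d}\left(\sum_{\mathbf i\in[0,M)^d}\widetilde q_{\mathbf i}\right)\left[\|W\|_1-\frac{(2M_0)^dD_0\|W\|_2^2}{C_1(\mu_2^1)^{\alpha/2}-D_0\|W\|_1}\right],
\]
and now the bracket is already $\delta$-free; applying $\sum_{\mathbf i}\widetilde q_{\mathbf i}\geq\delta M^d\cdot D_0/M^\alpha$ on $\mathcal A_{M,\delta}$ to the common prefactor gives \eqref{eq:lambda-on-a-delta} exactly. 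This is the route the paper takes.
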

\begin{proof}
 We have already shown in \eqref{eq:per_int} that $\int_{\mathcal T_M}
	\widetilde{V}_M^{\omega}(x)\,{\rm d}x = \|W\|_1  \left(\sum_{\mathbf i\in [0,M)^d} \widetilde{q}_{\mathbf i}(\omega)\right)$.
	Under present assumptions, we will also find a nice etimate on $\int_{\mathcal T_M}
	\left(\widetilde{V}_M^{\omega}(x)\right)^2\,{\rm d}x$
	and then we will apply Lemma \ref{lem:lambda}.
Observe that because of the assumption $\mbox{supp}\,W\subset  [-M_0,M_0]^d,$ in the sum defining $\widetilde{V}_M^\omega,$
 \[
\widetilde{V}_M^{\omega}(x) = \sum_{\mathbf i\in[0,M)^d} \widetilde{q}_{\mathbf i}(\omega)\sum_{\mathbf i'\in\pi_M^{-1}(\mathbf i)}W(x-\mathbf i')\]	
there are at most $(2M_0)^d$ nonzero terms  and for every fixed $\mathbf i\in[0,M)^d$ the range of the summation $\pi_M^{-1}(\mathbf i)$ in the inner sum contains at most one element.
Consequently,
\[
\left(\widetilde{V}_M^{\omega}(x)\right)^2 \leq (2M_0)^d \sum_{\mathbf i\in[0,M)^d} \widetilde{q}_{\mathbf i}(\omega)^2\sum_{\mathbf i'\in\pi_M^{-1}(\mathbf i)}W(x-\mathbf i')^2,\]
and further, as in the proof of \eqref{eq:per_int},
\[\int_{\mathcal T_M}
\left(\widetilde{V}_M^{\omega}(x)\right)^2\,{\rm d}x\leq (2M_0)^d\sum_{\mathbf i\in[0,M)^d} \widetilde{q}_{\mathbf i}^2(\omega) \|W\|_2^2.\]
Inserting these estimates inside \eqref{eq:lambda-temple} we obtain:
\begin{eqnarray*}
	\lambda_1^{M, {V}_M^\omega}(\mathcal T_M) &\geq& \frac{1}{M^d}\left[
	\|W\|_1\sum_{\mathbf i\in [0,M)^d}\widetilde{q}_{\mathbf i}(\omega) -
	\frac{(2M_0)^d}{(C_1 (\mu_2^1)^{\frac{\alpha}{2}}-D_0\|W\|_1)M^{-\alpha}}\|W\|_2^2\sum_{\mathbf i\in[0,M)^d} \widetilde{q}_{\mathbf i}^2(\omega)\right]\\
	&=& \frac{1}{M^d}\sum_{\mathbf i\in[0,M)^d}\widetilde{q}_{\mathbf i}(\omega)\left[ \|W\|_1-\frac{(2M_0)^d\|W\|_2^2 \sum_{\mathbf i\in[0,M)^d} \widetilde{q}_{\mathbf i}^2(\omega)}{(C_1 (\mu_2^1)^{\frac{\alpha}{2}}-D_0\|W\|_1)M^{-\alpha}} \left(\sum_{\mathbf i\in[0,M)^d} \widetilde{q}_{\mathbf i}(\omega)\right)^{-1}\right].
\end{eqnarray*}
	
	Now: in the sum $\sum_{\mathbf i\in[0,M)^d}\widetilde{q}_{\mathbf i}(\omega)$ we keep only those $\mathbf i$'s for which $q_{\mathbf i} >  \frac{D_0}{M^\alpha}.$ Because of the assumption $\omega\in\mathcal A_{M,\delta},$ this leads to
	\[\sum_{\mathbf i\in[0,M)^d}\widetilde{q}_{\mathbf i}(\omega) \geq \frac{D_0}{M^\alpha}\cdot \delta M^d,\]
	and for $\sum_{\mathbf i\in[0,M)^d}\widetilde{q}^2_{\mathbf i}(\omega)$ we write
	\[\sum_{\mathbf i\in[0,M)^d}\widetilde{q}^2_{\mathbf i}(\omega)\leq
	\frac{D_0}{M^\alpha}\sum_{\mathbf i\in[0,M)^d}\widetilde{q}_{\mathbf i}(\omega).\]
	Finally,
	\begin{eqnarray*}
		\lambda_1^{M,{V}_M^\omega} &\geq& \frac{D_0\delta}{M^\alpha}\left[\|W\|_1-\frac{(2M_0)^dD_0\|W\|_2^2}{C_1 (\mu_2^1)^{\frac{\alpha}{2}}-D_0\|W\|_1}\right].
	\end{eqnarray*}
\end{proof}

\subsection{Conclusion of the proof of Theorem \ref{th:upper-short} - derivation of the rate function.} \label{sec:conclusion}

 To conclude the proof  we continue with the estimate of $L(t)$ from \eqref{eq:stop}, splitting the $\mathbb Q-$integration into two parts: over $\mathcal A_{M,\delta}$ and over its complement.

For the integral over $\mathcal A_{M,\delta}$ we have the estimate from Lemma \ref{lem:lambda-on-a-delta}, and the integral over $\mathcal A_{M,\delta}^c$ is not bigger than  $\mathbb Q[\mathcal A_{M,\delta}^c],$ whose probability can be estimated by the following Bernstein-type inequality  on the binomial distribution.  Its proof is an exercise from elementary probability, but we give here a short proof for the reader's convenience.

 \begin{lemma}\label{lem:bernoulli} Let $(\Omega, \mathcal F, \mathbb P)$ be a given probability space and let $S_n:
 	\Omega\to\mathbb R$ be a random variable with the binomial distribution $B(n,p),$ $n\geq 1, p\in(0,1).$ Then, for any $p,\gamma\in(0,1)$   such that $\gamma>p,$
 	\begin{equation}\label{eq:bern-exp}
 	\mathbb P[S_n  \geq  \gamma n]\leq\left(\left(\frac{1-p}{1-\gamma}\right)^{1-\gamma}\left(\frac{p}{\gamma}
 	\right)^\gamma\right)^n.
 	\end{equation}
 \end{lemma}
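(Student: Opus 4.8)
The plan is to prove the Bernstein-type bound \eqref{eq:bern-exp} by the standard exponential Chebyshev (Cram\'er) argument, optimizing the free parameter explicitly. Write $S_n = \sum_{k=1}^n \xi_k$ where $\xi_k$ are i.i.d.\ Bernoulli$(p)$. For any $\lambda > 0$, Markov's inequality applied to ${\rm e}^{\lambda S_n}$ gives
$$
\mathbb P[S_n \geq \gamma n] = \mathbb P\big[{\rm e}^{\lambda S_n} \geq {\rm e}^{\lambda \gamma n}\big] \leq {\rm e}^{-\lambda \gamma n}\, \mathbb E\big[{\rm e}^{\lambda S_n}\big] = {\rm e}^{-\lambda \gamma n}\big(1 - p + p{\rm e}^{\lambda}\big)^n = \Big({\rm e}^{-\lambda \gamma}\big(1-p+p{\rm e}^{\lambda}\big)\Big)^n,
$$
using independence to factor the moment generating function. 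So it remains to choose $\lambda$ so that the base equals the claimed quantity.

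Next I would minimize $\varphi(\lambda) := {\rm e}^{-\lambda\gamma}(1-p+p{\rm e}^{\lambda})$ over $\lambda > 0$. Setting the derivative to zero, $-\gamma(1-p+p{\rm e}^{\lambda}) + p{\rm e}^{\lambda} = 0$, which solves to $p{\rm e}^{\lambda} = \frac{\gamma(1-p)}{1-\gamma}$, i.e.\ ${\rm e}^{\lambda} = \frac{\gamma(1-p)}{p(1-\gamma)}$. Since $\gamma > p$ we have $\frac{\gamma}{p} > 1$ and $\frac{1-p}{1-\gamma} > 1$, so ${\rm e}^{\lambda} > 1$, confirming that this critical point satisfies $\lambda > 0$; convexity of $\varphi$ (or a second-derivative check) ensures it is the minimum. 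Plugging this value back in: $1 - p + p{\rm e}^{\lambda} = 1 - p + \frac{\gamma(1-p)}{1-\gamma} = \frac{1-p}{1-\gamma}$, and ${\rm e}^{-\lambda\gamma} = \left(\frac{p(1-\gamma)}{\gamma(1-p)}\right)^{\gamma}$, so
$$
\varphi(\lambda) = \left(\frac{p(1-\gamma)}{\gamma(1-p)}\right)^{\gamma} \cdot \frac{1-p}{1-\gamma} = \left(\frac{p}{\gamma}\right)^{\gamma} (1-\gamma)^{\gamma}(1-p)^{-\gamma}\cdot(1-p)(1-\gamma)^{-1} = \left(\frac{p}{\gamma}\right)^{\gamma}\left(\frac{1-p}{1-\gamma}\right)^{1-\gamma},
$$
which is exactly the base appearing in \eqref{eq:bern-exp}. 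Raising to the $n$-th power yields the claim.

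There is essentially no obstacle here; the only thing to be mildly careful about is the bookkeeping in the algebraic simplification of $\varphi$ at the optimal $\lambda$ and the verification that $\lambda > 0$ (which is where the hypothesis $\gamma > p$ enters and is used). One could alternatively phrase the whole computation as the relative-entropy (Chernoff) bound $\mathbb P[S_n \geq \gamma n] \leq \exp(-n\, H(\gamma\|p))$ with $H(\gamma\|p) = \gamma\log\frac{\gamma}{p} + (1-\gamma)\log\frac{1-\gamma}{1-p}$, and then observe that $\exp(-H(\gamma\|p))$ equals the stated base; but the direct optimization above is self-contained and matches the form requested in the statement, so I would present it that way.
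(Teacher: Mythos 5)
Your proof is correct and uses essentially the same argument as the paper: an exponential Markov (Chernoff) bound followed by optimizing the free parameter, with the hypothesis $\gamma > p$ entering to guarantee that the optimal exponent is positive. You simply carry out the algebraic simplification at the optimum in more explicit detail than the paper does.
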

 \begin{proof}
 	For any given $t>0,$ we have the following estimate, deduced from the Markov inequality:
 	\begin{eqnarray*}
 		\mathbb P[S_n \geq \gamma n]&  =  & \mathbb P\left[{\rm e}^{tS_n} \geq {\rm e}^{t\gamma n}\right]
 		\leq \frac{\mathbb E{\rm e}^{tS_n}}{{\rm e}^{t\gamma n}}
 		= \left(\frac{p{\rm e}^t+1-p}{{\rm e}^{t\gamma}}\right)^n.
 	\end{eqnarray*}
 	The minimal value of the right-hand side is taken at $t_\gamma= \log\left(\frac{\gamma}{p}\,\frac{1-p}{1-\gamma}\right),$ which is positive for $\gamma>p.$ This value is equal to
 	 $\left(\left(\frac{1-p}{1-\gamma}\right)^{1-\gamma}\left(\frac{p}{\gamma}
 	\right)^\gamma\right)^n,$ as claimed.
 \end{proof}

\begin{proof}[Proof of Theorem \ref{th:upper-short}] For an arbitrary $\delta \in (0,1)$ we can write
\begin{equation} \label{eq:stop1}
L(t)\leq C\mathbb E^{\mathbb Q}\left[{\rm e}^{-(t-1)\lambda_1^{M, {V}_M^\omega}}; \mathcal A_{M,\delta}\right]+ C\mathbb Q[\mathcal A _{M,\delta}^c], \quad t > 1, \ \ M \geq M_0.
\end{equation}
To make use of Lemma \ref{lem:bernoulli} observe
\begin{eqnarray*}
	\mathcal A_{M,\delta}^c&=&\left\{\omega: \#\left\{\mathbf i\in[0,M)^d: q_{\mathbf i}(\omega)>\frac{D_0}{M^\alpha}\right\}< \delta M^d \right\}\\
	&=&
	\left\{\omega: \#\left\{\mathbf i\in[0,M)^d: q_{\mathbf i}(\omega)\leq\frac{D_0}{M^\alpha}\right\} \geq   (1-\delta) M^d \right\}.
\end{eqnarray*}
The events $A_{\mathbf i}=\{q_{\mathbf i}\leq\frac{D_0}{M^\alpha}\}$ are independent and have common probability $ p_M=F_q(\frac{D_0}{M^\alpha}).$ Therefore we
use the lemma with $n=M^d,$  $ p = p_M$ as above, and $\gamma=(1-\delta).$
We only need to make sure that  $(1-\delta)>p_M,$ i.e. $\delta < 1-p_M.$
As eventually we will
let $M\to\infty$ and the distribution of the random variable $q$ is not concentrated at $0,$ this will not be a problem.
 It then follows that for every $M\geq M_0$ and $\delta < 1-p_M$ it holds
\begin{align*}
\mathbb Q[\mathcal A_{M,\delta}^c]\leq  \left[\left(\frac{1-p_M}{\delta}\right)^{\delta}\left(\frac{p_M}{1-\delta}\right)^{1-\delta}\right]^{M^d}
\leq\left[\frac{1}{1-\delta}\left(\frac{1}{\delta}\right)^{\frac{\delta}{1-\delta}}\cdot p_M \right]^{(1-\delta)M^d}.
\end{align*}
Since $p_M = F_q(\frac{D_0}{M^\alpha}) \to F_q(0) \in [0,1)$ as $M \to \infty$ and $\frac{1}{1-\delta}\left(\frac{1}{\delta}\right)^{\frac{\delta}{1-\delta}} \searrow 1$ as $\delta \searrow 0$, we can easily find $M_1 \geq M_0$ and $\delta_0 < 1-p_{M_1}$ such that for every $M \geq M_1$
$$
\frac{1}{1-\delta_0}\left(\frac{1}{\delta_0}\right)^{\frac{\delta_0}{1-\delta_0}}\cdot \sqrt{p_M} \leq 1
$$
(in particular, $\delta_0 < 1-p_{M}$, for $M \geq M_1$ as $p_M$ is nonincreasing in $M$). It gives
\begin{align}\label{eq:q-of-ac}
\mathbb Q[\mathcal A_{M,\delta_0}^c] \leq {p_M}^{(1-\delta_0)M^d/2} = {\rm e}^{-\frac{1-\delta_0}{2} \, M^d \, \log (1/p_M)}, \quad M \geq M_1.
\end{align}

Denote $c_1= \frac{D_0\delta_0}{2}\left[\|W\|_1-\frac{(2M_0)^dD_0\|W\|_2^2}{(C_1 (\mu_2^1)^{\frac{\alpha}{2}}-D_0\|W\|_1)}\right]$  and $c_2 = (1-\delta_0)/2$. We now insert the bounds \eqref{eq:lambda-on-a-delta} and \eqref{eq:q-of-ac} inside \eqref{eq:stop1} and obtain that there exist $t_0>1$ such that for every $t \geq t_0$ and $M \geq M_1$ we have
\begin{eqnarray}
	L(t) &\leq & C\left({\rm e}^{-\frac{2c_1(t-1)}{M^\alpha}} +{\rm e}^{-c_2 M^d \log\frac{1}{F_q(D_0/M^\alpha)}}\right) \nonumber\\
	&\leq& C\left({\rm e}^{- \frac{c_1t}{(M-1)^\alpha}} +{\rm e}^{-c_2M^d\log\frac{1}{F_q(D_0/M^\alpha)}}\right) \leq C\left({\rm e}^{- \frac{c_3t}{(M-1)^\alpha}} +{\rm e}^{-c_3 M^d\log\frac{1}{F_q(D_0/M^\alpha)}}\right) \label{eq:el-t},
\end{eqnarray}
with $c_3=\min(c_1,c_2).$

So far, the bound we obtained was valid for any $M>M_1.$ We will now make $M$ depend on $t,$ in such a manner that $M\to\infty$ when $t\to\infty.$
We will use the function $ j(x)=x^{d+\alpha}\log \frac{1}{F_q(D_0/x^\alpha)}$  from \eqref{eq:Phi}. For $t\geq t_0$ (we may increase $t_0$ if necessary), the function
$x_t:= j^{-1}(t)$ is well defined and obeys \eqref{eq:t-and-xt}.
Let us take
\begin{equation}\label{eq:M-def}
M=\lfloor x_t\rfloor+1, \quad t\geq t_0,
\end{equation}
i.e. $M$ is the unique integer satisfying  $x_{t}-1<M-1\leq x_t.$ Clearly, there is $t_1\geq t_0$ such that for $t\geq t_1$ one has $M\geq M_1.$
Consequently, by \eqref{eq:t-and-xt},
\[(M-1)^\alpha\leq x_t^\alpha= \left(\frac{t}{\log\frac{1}{F_q(D_0/x_t^\alpha)}}\right)^{\frac{\alpha}{d+\alpha}} \]
	so that\[\frac{t}{(M-1)^\alpha} \geq t^{\frac{d}{d+\alpha}}\left(\log\frac{1}{F_q(D_0/x_t^\alpha)}\right)^{\frac{\alpha}{d+\alpha}},\quad t\geq t_1.\]
Next, as the function $x\mapsto x^d\log\frac{1}{F_q(D_0/x^\alpha)}$ is increasing and $x_t<M,$ for the other exponent in \eqref{eq:el-t} we have
\[M^d\log\frac{1}{F_q(D_0/M^\alpha)}\geq x_t^d\log\frac{1}{F_q(D_0/x_t^\alpha)}= t^{\frac{d}{d+\alpha}}\left(\log\frac{1}{F_q(D_0/x_t^\alpha)}\right)^{\frac{\alpha}{d+\alpha}}.\]	 
Consequently,
\[L(t)\leq 2C {\rm e}^{-D t^{\frac{d}{d+\alpha}}\left(\log\frac{1}{F_q(D_0/x_t^\alpha)}\right)^{\frac{\alpha}{d+\alpha}}}=2C{\rm e}^{-D t^{\frac{d}{d+\alpha}} (h(t))^{\frac{\alpha}{d+\alpha}}},\]
which yields \eqref{eq:upper-1}.  The second assertion \eqref{eq:upper-2} is an easy consequence of  \eqref{eq:upper-1} and  \eqref{eq:lim_h}.
\end{proof}

\bigskip

\section{The lower bound for the Laplace transforms}\label{sec:lower}
The matching lower bound will be obtained by restricting the integration in the integrals leading to $L(t)$ (see \eqref{eq:IDS-formula_2}) to a smaller set, on which we will be able to control the expressions from below, and whose probability will be manageable.  On this set we replace our random potential $V^{\omega}$ with  deterministic potentials
$$
V_{\kappa}(x) := \kappa \sum_{\mathbf i\in[-M,2M)^d} W(x- \mathbf i), \quad \text{where \ $\kappa >0$ \ is some specially chosen parameter,}
$$
and then we estimate from the above the ground state eigenvalues of the Schr\"odinger operators
$$
\Phi(-\Delta) + V_{\kappa}
$$
constrained to large boxes in $\R^d$. Note that in this section we work with the operators $\Phi(-\Delta)$ and the subordinate Brownian motions in $\R^d$ and its sub-domains only, and we need not consider the operators and the processes on tori. We first recall necessary notation (cf. Section \ref{sec:dirichlet}).

If $V \in \mathcal K_{\loc}$ is a nonnegative potential and $\Lambda$ is a bounded domain in $\R^d$, then by $H_{\Lambda}$ we denote the Schr\"odinger operator
$H = \Phi(-\Delta) + V$ constrained to $\Lambda$ (i.e. with  Dirichlet conditions on $\Lambda^c$ in the non-local case and on $\partial \Lambda$ in the local case)
and by $P_t^{V,\Lambda} = e^{-t H_{\Lambda}}$ the operators of its evolution semigroup. The ground state eigenvalue $\lambda_1^{V}(\Lambda)$ of $H_{\Lambda}$ can be represented through the variational formula
\begin{align} \label{eq:dirichlet_variat}
	\lambda_1^V(\Lambda)=\inf \left\{\mathcal E(\varphi,\varphi) + \int_{\Lambda} V(x) \varphi^2(x) dx: \varphi \in L^2(\Lambda), \|\varphi\|_2=1\right\}.
\end{align}
This infimum is achieved for $\varphi_1^{V,\Lambda}$, the ground state eigenfunction of $H_{\Lambda}$. Below we also consider the case when $V \equiv 0$ for which we use simpler notation: $P_t^{\Lambda}$, $\lambda_1(\Lambda)$ and $\varphi_1^{\Lambda}$.

We start with an auxiliary lemma.
\begin{lemma}\label{lem:lambda-f}
	Let $V:\mathbb R^d\to\mathbb R_+$ be a potential that belongs
	to $\mathcal K_{\rm loc}\cap L^1(\mathbb R^d).$ Then for any domain $\Lambda \subset \R^d$ we have
	$$
	\lambda_1^V(\Lambda)\leq \lambda_1(\Lambda) + {\rm e} \, p_{s}(0) \, \|V\|_1, \quad \text{with} \ \  s:= \frac{1}{\lambda_1(\Lambda)}.
	$$
\end{lemma}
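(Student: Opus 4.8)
\emph{Proof plan.} The plan is to use the Rayleigh--Ritz variational characterization \eqref{eq:dirichlet_variat} of $\lambda_1^V(\Lambda)$ with a single, well-chosen competitor: the ground state eigenfunction $\varphi_1^{\Lambda}$ of the \emph{unperturbed} Dirichlet operator $\Phi(-\Delta)$ on $\Lambda$. Since $\|\varphi_1^{\Lambda}\|_2=1$ and $\mathcal E(\varphi_1^{\Lambda},\varphi_1^{\Lambda})=\lambda_1(\Lambda)$, once we know that $\varphi_1^{\Lambda}$ is admissible in \eqref{eq:dirichlet_variat}, we immediately get
\[
\lambda_1^V(\Lambda)\leq \mathcal E(\varphi_1^{\Lambda},\varphi_1^{\Lambda})+\int_{\Lambda} V(x)\,\varphi_1^{\Lambda}(x)^2\,{\rm d}x
\leq \lambda_1(\Lambda)+\|\varphi_1^{\Lambda}\|_{\infty}^2\,\|V\|_1,
\]
where the last step uses $V\geq 0$ and $V\in L^1(\R^d)$. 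Thus the whole statement reduces to the a priori sup-norm bound $\|\varphi_1^{\Lambda}\|_{\infty}^2\leq {\rm e}\,p_s(0)$ with $s=1/\lambda_1(\Lambda)$.

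To prove this bound I would exploit the eigenvalue relation in semigroup form, $P_t^{\Lambda}\varphi_1^{\Lambda}={\rm e}^{-t\lambda_1(\Lambda)}\varphi_1^{\Lambda}$, i.e.
$\varphi_1^{\Lambda}(x)={\rm e}^{t\lambda_1(\Lambda)}\int_{\Lambda} p_t^{\Lambda}(x,y)\,\varphi_1^{\Lambda}(y)\,{\rm d}y$ for every $t>0$, where $p_t^{\Lambda}(x,y)$ is the kernel of $P_t^{\Lambda}$. By Cauchy--Schwarz and $\|\varphi_1^{\Lambda}\|_2=1$,
\[
|\varphi_1^{\Lambda}(x)|\leq {\rm e}^{t\lambda_1(\Lambda)}\Big(\int_{\Lambda} p_t^{\Lambda}(x,y)^2\,{\rm d}y\Big)^{1/2}.
\]
By the Chapman--Kolmogorov identity and the symmetry of the Dirichlet kernel, $\int_{\Lambda} p_t^{\Lambda}(x,y)^2\,{\rm d}y=p_{2t}^{\Lambda}(x,x)$, and then $p_{2t}^{\Lambda}(x,x)\leq p_{2t}(x,x)=p_{2t}(0)$, using the domination $p_t^{\Lambda}(x,y)\leq p_t(x,y)$ that follows from \eqref{def:sem-dir-kernel-bridge} with $V\equiv 0$ (the killing factor $\mathbf P^t_{x,y}[t<\tau_{\Lambda}]$ is at most $1$) together with translation invariance $p_{2t}(x,x)=p_{2t}(0)$. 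Hence $\|\varphi_1^{\Lambda}\|_{\infty}^2\leq {\rm e}^{2t\lambda_1(\Lambda)}\,p_{2t}(0)$ for all $t>0$; choosing $t=\tfrac12 s=\tfrac{1}{2\lambda_1(\Lambda)}$ makes $2t\lambda_1(\Lambda)=1$ and $2t=s$, which yields exactly $\|\varphi_1^{\Lambda}\|_{\infty}^2\leq {\rm e}\,p_s(0)$. Combining this with the first display completes the argument.

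There is no real obstacle here; the only points requiring a little care are bookkeeping. First, one should note $\lambda_1(\Lambda)\in(0,\infty)$ for bounded $\Lambda$ (this is guaranteed by the discreteness of the spectrum established in Section \ref{sec:dirichlet}), so that $s$ is finite and the optimal choice of $t$ is legitimate. Second, $\varphi_1^{\Lambda}$ lies in the form domain $\mathcal D(\mathcal E)$ with $\mathcal E(\varphi_1^{\Lambda},\varphi_1^{\Lambda})=\lambda_1(\Lambda)$, which is part of the spectral construction of $H_{\Lambda}$. Third, the sup-norm bound, once established, makes $\varphi_1^{\Lambda}$ a genuine competitor in \eqref{eq:dirichlet_variat}: since $V\in\mathcal K_{\loc}$ it is form-bounded, and in fact $\int_{\Lambda} V\,\varphi_1^{\Lambda\,2}\leq\|\varphi_1^{\Lambda}\|_{\infty}^2\|V\|_1<\infty$, so the right-hand side of \eqref{eq:dirichlet_variat} is finite for this test function.
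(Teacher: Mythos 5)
Your proposal is correct and follows essentially the same route as the paper: you derive the sup-norm bound $\|\varphi_1^{\Lambda}\|_\infty^2 \leq {\rm e}\,p_s(0)$ via the semigroup eigenrelation, Cauchy--Schwarz, Chapman--Kolmogorov, and domination of the Dirichlet kernel by the free kernel, and then plug $\varphi_1^{\Lambda}$ into the Rayleigh--Ritz formula. The only cosmetic difference is that you keep the time parameter $t$ free and optimize at the end, whereas the paper fixes $t=s/2$ from the outset; the substance is identical.
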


\begin{proof}
	 Choosing $s=\frac{1}{\lambda_1(\Lambda)}$ in the eigenequation $P_{s/2}^{\Lambda}\varphi_1^{\Lambda}={\rm e}^{-(s/2) \lambda_1(\Lambda)}\varphi_1^{\Lambda},$ we get
	\begin{eqnarray*}
		\varphi_1^{\Lambda}(x) &= & \sqrt{{\rm e}} \int_{\Lambda}\varphi_1^{\Lambda}(y) p^{\Lambda}_{s/2}(x,y){\rm d}y
		 \leq  \sqrt{{\rm e}}\left(\int_{\Lambda}\left(\varphi_1^{\Lambda}(y)\right)^{2}{\rm d}y\right)^{1/2}
		\left(\int_{\Lambda} (p^{\Lambda}_{s/2}(x,y))^2{\rm d}y\right)^{1/2}\\
		&\leq& \sqrt{{\rm e}} \, \|\varphi_1^{\Lambda}\|_2 \, (p_s(x,x))^{1/2} = \sqrt{{\rm e} \, p_s(0)} .
	\end{eqnarray*}
	By \eqref{eq:dirichlet_variat} we then obtain
	\begin{eqnarray}\label{eq:lambda-f-1}
	\lambda_1^V(\Lambda)\leq \mathcal E(\varphi_1^{\Lambda},\varphi_1^{\Lambda}) +\int_{\Lambda} V(x)\big(\varphi_1^{\Lambda}(x)\big)^2{\rm d}x
	\leq \lambda_1(\Lambda) +{\rm e} \, p_s(0) \, \|V\|_1.
	\end{eqnarray}
\end{proof}

The following is the main theorem of this section.
\begin{theorem}\label{th:lower}
Assume {\bf (B)},  {\bf (Q)},  and {\bf (W)}. Let $h$ be given by \eqref{eq:def-ht}.
 Then there exists $C>0$ such that
	\begin{equation}\label{eq:lower-1}
	\liminf_{t\to\infty} \frac{\log L(t)}{t^{\frac{d}{d+\alpha}} (h(t))^{\frac{\alpha}{d+\alpha}}} \geq -C.
	\end{equation}	
 In particular, when the distribution of $q$ has an atom at zero, i.e. $F_q(0) >0$, then
 	\begin{equation}\label{eq:lower-2}
 	\liminf_{t\to\infty}\frac{\log L(t)}{t^{\frac{d}{d+\alpha}}} \geq -C \left(\log\frac{1}{F_q(0)}\right)^{\frac{\alpha}{d+\alpha}} .
 	\end{equation}	
\end{theorem}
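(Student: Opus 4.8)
The plan is to mirror the upper-bound argument from Section~\ref{sec:upper}, but now working with a lower bound: restrict the $\mathbb Q$-integration in \eqref{eq:IDS-formula_2} to a convenient ``good'' set of lattice configurations, and on that set replace $V^\omega$ by a small deterministic periodic potential so that the problem reduces to analyzing non-random Dirichlet Schr\"odinger semigroups $\Phi(-\Delta)+V_\kappa$ on large boxes. First I would fix a scale $M=M(t)$ (again essentially $M\asymp x_t=j^{-1}(t)$, the same choice as in \eqref{eq:M-def}) and a truncation level $\kappa=\kappa_M$ of order $D_0/M^\alpha$, and define the good event $\mathcal B_{M}=\{\omega: q_{\mathbf i}(\omega)\le \kappa_M \text{ for all } \mathbf i\in[-M,2M)^d\}$. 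By independence, $\mathbb Q[\mathcal B_M]=F_q(\kappa_M)^{(3M)^d}=\exp\!\big(-(3M)^d\log\frac{1}{F_q(D_0/M^\alpha)}\big)$, which already produces the factor $M^d g(M^\alpha)$ in the exponent. On $\mathcal B_M$ one has, pointwise on the box, $V^\omega(x)\le V_{\kappa_M}(x):=\kappa_M\sum_{\mathbf i\in[-M,2M)^d}W(x-\mathbf i)$, so $e^{-\int_0^t V^\omega(X_s)\,ds}\ge e^{-\int_0^t V_{\kappa_M}(X_s)\,ds}$ as long as the bridge from $x$ to $x$ stays inside the box; restricting to $x$ in a central sub-box of size $M$ and inserting the event $\{t<\tau_{\Lambda_M}\}$ for $\Lambda_M$ the box of side $3M$ lets me pass from $p_t(x,x)\mathbf E^t_{x,x}[\cdots]$ to the deterministic Dirichlet kernel $p^{V_{\kappa_M},\Lambda_M}_t(x,x)$.

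Next I would lower-bound that deterministic trace. Writing $\int_{\Lambda_M}p^{V_{\kappa_M},\Lambda_M}_t(x,x)\,dx=\operatorname{Tr}P^{V_{\kappa_M},\Lambda_M}_t\ge e^{-t\lambda_1^{V_{\kappa_M}}(\Lambda_M)}$, everything comes down to an \emph{upper} bound on the ground-state eigenvalue $\lambda_1^{V_{\kappa_M}}(\Lambda_M)$. This is exactly what Lemma~\ref{lem:lambda-f} provides: $\lambda_1^{V_{\kappa_M}}(\Lambda_M)\le \lambda_1(\Lambda_M)+\mathrm{e}\,p_s(0)\,\|V_{\kappa_M}\|_1$ with $s=1/\lambda_1(\Lambda_M)$. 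Here $\|V_{\kappa_M}\|_1=\kappa_M (3M)^d\|W\|_1\asymp M^{d-\alpha}$; by \eqref{eq:assum-phi-close-to-0} and the eigenvalue scaling \eqref{eq:scaling_eig}--\eqref{eq:eigenvalue_subord} the free Dirichlet ground-state eigenvalue on a box of side $3M$ is of order $M^{-\alpha}$ (using \cite[Theorem~4.4]{bib:CS} / the comparison with $\Phi$ of the rescaled first Neumann--Dirichlet eigenvalue), so $s\asymp M^\alpha$ and the on-diagonal bound \eqref{eq:diag_on_pt} gives $p_s(0)\le C s^{-d/\alpha}\asymp M^{-d}$. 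Hence $\lambda_1^{V_{\kappa_M}}(\Lambda_M)\le c\,M^{-\alpha}$, i.e. $t\lambda_1^{V_{\kappa_M}}(\Lambda_M)\le c\,t/M^\alpha$. Since $p_t(0,0)$ contributes only a polynomially small prefactor (by \eqref{eq:diag_on_pt}, negligible on the log scale), combining the three ingredients yields
$$
\log L(t)\ge -c\,\frac{t}{M^\alpha}-c\,M^d\log\frac{1}{F_q(D_0/M^\alpha)}-c\log t.
$$

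Finally I would optimize in $M$. Choosing $M\asymp x_t=j^{-1}(t)$ exactly as in the upper bound and invoking the defining relation \eqref{eq:t-and-xt}, $t=x_t^{d+\alpha}h(t)$, both $t/M^\alpha$ and $M^d g(M^\alpha)$ become comparable to $t^{d/(d+\alpha)}h(t)^{\alpha/(d+\alpha)}$, while $\log t$ is of lower order because $h$ grows sub-linearly by \eqref{eq:lim-h-1}. This gives \eqref{eq:lower-1}. The special case \eqref{eq:lower-2} follows by plugging $F_q(0)>0$, so $h(t)\to\log\frac{1}{F_q(0)}$ by \eqref{eq:lim_h}. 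I expect the main technical obstacle to be the two-sided control of the free Dirichlet ground-state eigenvalue $\lambda_1(\Lambda_M)$ on a box of side proportional to $M$ — one needs $\lambda_1(\Lambda_M)\asymp M^{-\alpha}$ with constants uniform in $M$, which for the non-local operators $\Phi(-\Delta)$ requires care with the Dirichlet exterior condition and the scaling; the cited comparison results for subordinate processes on balls, together with monotonicity in the domain (inscribing/circumscribing balls) and condition \eqref{eq:assum-phi-close-to-0}, should handle this, but it is the step where the non-locality genuinely has to be confronted. A secondary nuisance is making rigorous the ``bridge stays in the box'' step — ensuring that restricting to a central sub-box and inserting $\{t<\tau_{\Lambda_M}\}$ costs only a constant factor — which I would do exactly as in the derivation of \eqref{def:sem-dir-kernel-bridge} together with a crude lower bound on the non-exit probability of the bridge measure over a bounded time.
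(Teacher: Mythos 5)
Your proposal follows exactly the route the paper takes: restrict the $\mathbb Q$-integral to the event that all $q_{\mathbf i}$ in a neighbourhood of the box are $\leq D_0/M^\alpha$, pass to the deterministic potential $V_\kappa$, lower-bound the resulting Dirichlet trace by $\mathrm{e}^{-t\lambda_1^{V_\kappa}(\Lambda_M)}$, control $\lambda_1^{V_\kappa}(\Lambda_M)$ via Lemma~\ref{lem:lambda-f}, and optimise $M\asymp x_t$.

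Two small corrections. First, the geometry: in the paper $\Lambda_M=[0,M)^d$ is \emph{both} the Dirichlet domain and the domain of integration, so $\int_{\Lambda_M}p_t^{V_\kappa,\Lambda_M}(x,x)\,\mathrm{d}x$ is literally $\mathrm{Tr}\,P_t^{V_\kappa,\Lambda_M}$; the lattice points in $[-M,2M)^d$ enter only because $\mathrm{supp}\,W\subset[-M_0,M_0]^d$ with $M_0\leq M$ and the path is killed on exiting $[0,M)^d$. Your variant — Dirichlet box of side $3M$, integrating $x$ over a central sub-box of side $M$ — would give a partial trace, and $\int_{\text{sub-box}}p^{V_\kappa,\Lambda}_t(x,x)\,\mathrm{d}x\geq\mathrm{e}^{-t\lambda_1}$ is not automatic (it needs mass of $\varphi_1^2$ on the sub-box); sticking to the paper's choice avoids this extra step. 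Second, you worry about needing two-sided control of $\lambda_1(\Lambda_M)\asymp M^{-\alpha}$, but in fact only the \emph{upper} bound is used: $\lambda_1(\Lambda_M)\leq c/M^\alpha$ both makes the first summand in Lemma~\ref{lem:lambda-f} small and forces $s=1/\lambda_1(\Lambda_M)\geq cM^\alpha$, which via \eqref{eq:diag_on_pt} gives $p_s(0)\leq c M^{-d}$. This one-sided estimate is supplied by \cite[Theorem~3.4]{bib:CS}, $\lambda_1(\Lambda_M)\leq\Phi(\mu_1(\Lambda_M))$, together with the Brownian Dirichlet scaling $\mu_1(\Lambda_M)=M^{-2}\mu_1(\Lambda_1)$ and the upper half of \eqref{eq:assum-phi-close-to-0}; no lower scaling of the nonlocal eigenvalue is needed.
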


\begin{proof}
	 Let $\Lambda_M:= [0,M)^d$, $M \in \Z_+$.
	  Recall that by \eqref{eq:IDS-formula_2}, for any $t > 0$, we have
	\begin{eqnarray}\label{eq:lt}
	L(t)&=&  \frac{p_t(0,0)}{M^d}\int_{\Lambda_M}\mathbf E_{x,x}^t\mathbb E^{\mathbb Q}\left[{\rm e}^{-\int_0^t V^\omega(X_s)\,{\rm d}s}\right]{\rm d}x
	\nonumber\\
	&=& \frac{1}{M^d}\int_{\Lambda_M} p_t(x,x)\mathbf E_{x,x}^t\mathbb E^{\mathbb Q}\left[{\rm e}^{-\int_0^t V^\omega(X_s)\,{\rm d}s}\right]{\rm d}x, \quad M\in\Z_+.
	\end{eqnarray}
	For given  $M\geq M_0$ (recall that $M_0$ comes from the Assumption {\bf (W)})  and $\kappa> 0$ let
	\[
	\mathcal A_{\kappa}^M=\{\omega:\,\forall \, \mathbf i\in  [-M, 2M)^d  \mbox{ we have } q_{\mathbf i}(\omega)\leq\kappa\}\]
	and restrict the inner expectations in  \eqref{eq:lt} to the set
	$\mathcal A_{\kappa}^M\cap\{t< \tau_{ \Lambda_M} \}.$
	For later use, observe that
	\begin{equation}\label{eq:est-qa}
	\mathbb Q[\mathcal A_{\kappa}^M]=(F_q(\kappa))^{ (3M)^d}={\rm e}^{-(3M)^d\log \frac{1}{F_q(\kappa)}}.
	\end{equation}
On the set $\{t< \tau_{ \Lambda_M}\}$ we have
	\[V^\omega(X_s)= \sum_{\mathbf i\in[-M,2M)^d}q_{\mathbf i}(\omega)W(X_s-\mathbf i),\quad s\leq t.\]
	This is due to the fact that when   $x- y \notin [-M,M]^d,$  then $W(x-y)=0.$
	Next, for $\omega\in\mathcal A_{\kappa}^M,$ all the $q_{\mathbf i}$'s in the first sum above are not bigger than $\kappa.$
	It follows that
	\begin{eqnarray}\label{eq:low-2}
	\mathbb E^{\mathbb Q} \left[{\rm e}^{-\int_0^tV^{\omega}(X_s)\,{\rm d}s};\mathcal A_{\kappa}^M\right] &\geq &
	\mathbb E^{\mathbb Q} \left[{\rm e}^{-\int_0^t\kappa\sum_{i\in[-M,2M)^d} W(X_s-\mathbf i){\rm d}s};\mathcal A_{\kappa}^M\right]\nonumber\\
	&\geq&{\rm e}^{-\int_0^t\kappa\sum_{i\in[-M,2M)^d} W(X_s-\mathbf i){\rm d}s}{\mathbb Q}[\mathcal A_{\kappa}^M].
	\end{eqnarray}
	
\noindent	We see that for  $M\geq M_0$
	\begin{eqnarray*}
		L(t)&\geq & \frac{1}{M^d} \mathbb Q[\mathcal A_{\kappa}^M] \int_{ \Lambda_M} p(t,x,x) \mathbf E_{x,x}^t\left[
		{\rm e}^{-\kappa\int_0^t \sum_{\mathbf i\in [-M,2M)^d}W(X_s-\mathbf i){\rm d}s}; t< \tau_{ \Lambda_M}\right]{\rm d}x.
	\end{eqnarray*}
	In the integral over $ \Lambda_M $ we recognize the trace of the operator $P_t^{ V_{\kappa}, \Lambda_M }$ (cf. \eqref{def:sem-dir-kernel-bridge}) on $L^2( \Lambda_M ),$ corresponding to the potential
	\[ V_{\kappa}(x)  = \kappa\sum_{{\mathbf i\in [-M,2M)^d}} W(x-\mathbf i).\]
	
Therefore this integral is not bigger than the principal eigenvalue of the operator  $P_t^{V_{\kappa}, \Lambda_M},$  which in turn can be estimated by Lemma \ref{lem:lambda-f}:
	\begin{equation}\label{eq:tr}
	\mbox{Tr}\, P_t^{V_{\kappa}, \Lambda_M} \geq {\rm e}^{-t \lambda_1^{ V_{\kappa}}( \Lambda_M)} \geq
	 {\rm e}^{-t(\lambda_1(\Lambda_M)+ \, {\rm e} \, p_{s}(0)\|V_{\kappa}\|_1)}, \quad \text{with} \ \ s = \frac{1}{\lambda_1(\Lambda_M)}.
	\end{equation}
	It remains to estimate  the $L^1-$norm of $ V_{\kappa}.$ We have:
	\begin{eqnarray}\label{eq:norm-of-f}
	\|V_{\kappa}\|_1 & = & \kappa\int_{ \R^d} \sum_{\mathbf i\in[-M, 2M)^d} W(x-\mathbf i) \,{\rm d}x =\kappa \sum_{\mathbf i\in[-M, 2M)^d} \int_{ \R^d}W(x){\rm d}x\nonumber\\
	 &  = & \kappa (3M)^d  \|W\|_1.
	\end{eqnarray}	
From the estimates  \eqref{eq:est-qa}, \eqref{eq:tr}, and \eqref{eq:norm-of-f} we then obtain that  for $M\geq M_0$ and $t>0$
\[L(t)\geq \frac{1}{M^d}{\rm e}^{-(3M)^d\log \frac{1}{F_q(\kappa)}} {\rm e}^{-t(\lambda_1(\Lambda_M)+ \, {\rm e} \, \kappa \, p_{s}(0) (3M)^d\|W\|_1)}, \quad \text{with} \ \ s = \frac{1}{\lambda_1(\Lambda_M)}. \]
 Furthermore, it follows from \cite[Theorem 3.4]{bib:CS} that
\[\lambda_1(\Lambda_M) \leq \Phi(\mu_1(\Lambda_M)),\]
where $\mu_1(\Lambda_M)$ is the ground state eigenvalue of the Laplace operator $-\Delta$ on $\Lambda_M$ with Dirichlet boundary conditions.
Since $\mu_1(\Lambda_M) = M^{-2} \mu_1(\Lambda_1)$,  by the upper bound in the assumption {\bf (B)} we obtain that there exist $M_1 \geq M_0$ and a constant $c_1>0$ such that
\[\lambda_1(\Lambda_M)\leq \frac{c_1}{M^\alpha},\quad M \geq M_1.\]
 In particular, by \eqref{eq:diag_on_pt}, there is a constant $c_2>0$, for which we get
$$
p_s(0)\big|_{s = \frac{1}{\lambda_1(\Lambda_M)}} \leq \frac{c_2}{M^d}, \quad \quad M \geq M_1.
$$
Consequently, by choosing $\kappa=\frac{D_0}{M^\alpha},$ where $D_0$ comes from \eqref{eq:c-zero} (same as in the proof of the upper bound), we obtain
\[L(t)\geq \frac{1}{M^d} {\rm e}^{-c_3\left(\frac{t}{(M+1)^\alpha}+M^d \left(\frac{M}{M+1}\right)^{\alpha} \log \frac{1}{F_q(D_0/M^\alpha)}\right)},\quad M\geq M_1,\]
for some constant $c_3>0.$
The exponent can be written as
\begin{equation}\label{eq:expo}
-\,\frac{c_3}{(M+1)^\alpha}\left(t+ M^{d+\alpha}\log \frac{1}{F_q(D_0/M^\alpha)}\right).
\end{equation}
Again, assume $t\geq t_0,$ let  $x_t=j^{-1}(t)$ (for a definition of the function $j$ and $t_0$ see the formula \eqref{eq:Phi} and the two sentences following it), and choose $M=\lfloor x_t\rfloor$. It is the unique integer for which
\[M\leq x_t  < M+1,\quad {\rm i.e.} \quad j(M)\leq t<j(M+1).\]
Condition  $j(M)\leq t$  reads $M^{d+\alpha}\log\frac{1}{F_q(D_0/M^\alpha)}\leq t.$
Moreover,  by \eqref{eq:t-and-xt},
\[\frac{t}{(M+1)^\alpha}\leq\frac{t}{x_t^\alpha} = t^{\frac{d}{d+\alpha}}\left(\log\frac{1}{F_q(D_0/x_t^\alpha)}\right)^{\frac{\alpha}{d+\alpha}}=t^{\frac{d}{d+\alpha}}(h(t))^{\frac{\alpha}{d+\alpha}}.\]
Consequently, there is a number $t_1\geq t_0$ such that for $t\geq t_1$
\[L(t)\geq \frac{1}{M^d} {\rm e}^{-2c_3 t^{\frac{d}{d+\alpha}}(h(t))^{\frac{\alpha}{d+\alpha}}},\]
with $M$ chosen as above, i.e.
\begin{align}\label{eq:lower_final}
\frac{\log L(t)}{t^{\frac{d}{d+\alpha}}(h(t))^{\frac{\alpha}{d+\alpha}}} \geq -2c_3 - d \frac{\log \lfloor x_t\rfloor}{t^{\frac{d}{d+\alpha}}(h(t))^{\frac{\alpha}{d+\alpha}}}.
\end{align}
To conclude, we only need to verify that
\[
\lim_{t\to\infty} \frac{\log \lfloor x_t\rfloor}{t^{\frac{d}{d+\alpha}}(h(t))^{\frac{\alpha}{d+\alpha}}}=0.\]
This is clear as,  by \eqref{eq:t-and-xt},
$t=x_t^{d+\alpha}h(t),$ and further
$(d+\alpha)\log x_t +\log h(t)=\log t, $ so that
\[\frac{\log x_t}{t^{\frac{d}{d+\alpha}}(h(t))^{\frac{\alpha}{d+\alpha}}}=\frac{1}{(d+\alpha)} \frac{\log t- \log h(t)}{t^{\frac{d}{d+\alpha}}(h(t))^{\frac{\alpha}{d+\alpha}}}\to 0,  \quad \text{as} \ \ t \to \infty,\]
because  $\lim_{t \to \infty} h(t)$ always exists and is strictly positive (possibly infinite, see \eqref{eq:lim_h}). Denoting $C=2c_3$  and taking $\liminf$ in \eqref{eq:lower_final}, we obtain \eqref{eq:lower-1}.
The second assertion \eqref{eq:lower-2} is again a direct consequence of \eqref{eq:lim_h}.
\end{proof}

\section{Tauberian theorems and the asymptotics of the IDS}\label{sec:tauber}

We will now transform the estimates for the Laplace transform $ L(t) $ of the IDS obtained in Sections \ref{sec:upper} and \ref{sec:lower} into statements concerning the IDS itself. When  $\log L(t) \asymp -t^{\gamma}$ as $t \to \infty$, with $\gamma \in (0,1)$,
then one just uses the exponential Tauberian theorem \cite[Theorem 2.1]{bib:F},  to get $\log \ell(\lambda)  \asymp -\lambda^{-\gamma/(1-\gamma)},$ $\lambda\searrow 0,$  as it was done previously in
\cite{bib:Nak,bib:Oku,bib:Szn1,bib:KPP1,bib:KK-KPP2,bib:KK-KPP-alloy-stable}. However,  the rate we identified in Theorems \ref{th:upper-short}, \ref{th:lower} is more general (a correction term is present) and the Tauberian theorems existing in the literature are not sufficient to deal with it. Therefore we first need to state and prove a version of exponential Tauberian theorem which can be applied in our situation.

\subsection{Tauberian theorem}

The setting is as follows. Let $\rho({\rm d}x)$ be a $\sigma-$finite Borel measure on $[0,\infty)$ and let $L(t):= \int_{[0,\infty)} e^{-tx} \rho({\rm d}x)$  be its Laplace transform.  We assume that $L(t) < \infty$ for every $t>0$.
We will use the same letter  $\rho$ for the cumulative distribution function of the measure $\rho,$  i.e. $\rho(x) = \rho([0,x])$, $x \geq 0$. Moreover, let $ g:(0,\infty)\to (0,\infty) $ be a nondecreasing function, continuous  on $[x_0, \infty)$, $x_0\geq 0$. Let $\alpha,d>0$ be two given numbers.
For $t\geq t_0:=x_0^{d+\alpha}g(x_0^\alpha)$ there is a unique number $x_t$ such that  $t=x_t^{d+\alpha}g(x_t^\alpha).$  Since $x^{d+\alpha}g(x^\alpha) \to \infty$ as $x \to \infty$, we also have $x_t \to \infty$ as $t \to \infty$. Finally, let $ h(t)=g(x_t^{\alpha}),$ $t\geq t_0.$  Clearly, $\lim_{t \to \infty} h(t)$ exists and $\lim_{t \to \infty} h(t) \in (0,\infty]$. In particular, $t^{\frac{d}{d+\alpha}} (h(t))^{\frac{\alpha}{d+\alpha}} \to \infty$ as $t \to \infty$.

\begin{theorem}\label{th:tauberian} Using the notation introduced above we have the following.
	
(i) If
\begin{equation}\label{eq:taub-lower-assump}
\liminf_{t\to\infty}\frac{\log L(t)}{t^{\frac{d}{d+\alpha}} (h(t))^{\frac{\alpha}{d+\alpha}}} \geq -A_1,
\end{equation}
with certain constant $A_1\in(0,\infty),$
then for any $B_1>A_1$ we have
\begin{equation}\label{eq:taub-lower}
\liminf_{x\to 0^+} \frac{x^{d/\alpha}}{g(B_1/x)}\,\log \rho(x)\geq -A_1  B_1^{d/\alpha}.
\end{equation}

\smallskip

(ii) If
\begin{equation}\label{eq:taub-upper-assump}
\limsup_{t\to\infty}\frac{\log L(t)}{t^{\frac{d}{d+\alpha}} (h(t))^{\frac{\alpha}{d+\alpha}}} \leq -A_2,
\end{equation}
with certain constant $A_2\in(0,\infty),$
then for any $B_2<A_2$
\begin{equation}\label{eq:taub-upper}
\limsup_{x\to 0^+} \frac{x^{d/\alpha}}{g(B_2/x)}\,\log \rho(x)\leq -(A_2-B_2) B_2^{d/\alpha}.
\end{equation}

\end{theorem}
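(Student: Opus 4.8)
## Proof Strategy for the Tauberian Theorem

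The plan is to prove both parts by combining the elementary Chebyshev-type inequality relating $L(t)$ to $\rho(x)$ with a careful choice of $t$ as a function of $x$, using the inverse relationship $t = x_t^{d+\alpha} g(x_t^\alpha)$. For part (i), I would start from the trivial bound $L(t) = \int_{[0,\infty)} e^{-tx}\rho(\mathrm{d}x) \geq e^{-tx}\rho(x)$ for every $x > 0$, which rearranges to $\log\rho(x) \leq tx + \log L(t)$. This is the ``wrong direction'' for a lower bound on $\log\rho(x)$, so instead for part (i) I need the reverse: split the integral as $L(t) = \int_{[0,x]} + \int_{(x,\infty)}$ and bound $\int_{(x,\infty)} e^{-tx'}\rho(\mathrm{d}x') \leq \int_{(x,\infty)} e^{-tx}\cdot(\text{something})$... actually the clean approach is: for a lower bound on $\rho(x)$, use $\rho(x) \geq \rho(x) - \rho(0) \geq$ nothing directly, so one instead writes $L(t) \leq \rho(x) + \int_{(x,\infty)} e^{-tx'}\rho(\mathrm{d}x')$ and controls the tail integral by integration by parts together with an a priori polynomial bound on $\rho$ (available since $L(1) < \infty$ forces $\rho(x) \leq e^x L(1)$, hence $\rho$ grows at most exponentially, and then the tail $\int_{(x,\infty)} e^{-tx'}\rho(\mathrm{d}x')$ is exponentially small in $t$ once $t$ is large relative to $x$). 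This gives $\rho(x) \geq L(t) - (\text{negligible tail})$, so $\log\rho(x) \geq \log L(t) + o(1)$ asymptotically.

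The core of part (i): given the target exponent $B_1 > A_1$, I choose $t = t(x)$ so that $x_t = B_1/x$, i.e. $t = (B_1/x)^{d+\alpha} g((B_1/x)^\alpha)$. Then $h(t) = g(x_t^\alpha) = g((B_1/x)^\alpha)$, and $t^{\frac{d}{d+\alpha}}(h(t))^{\frac{\alpha}{d+\alpha}} = x_t^d \cdot g(x_t^\alpha) = (B_1/x)^d g((B_1/x)^\alpha)$. Plugging the hypothesis \eqref{eq:taub-lower-assump} in the form $\log L(t) \geq -(A_1+\varepsilon) t^{\frac{d}{d+\alpha}}(h(t))^{\frac{\alpha}{d+\alpha}}$ for $t$ large (i.e. $x$ small) gives
\[
\log\rho(x) \geq -(A_1+\varepsilon)(B_1/x)^d g((B_1/x)^\alpha) + o\big((B_1/x)^d g((B_1/x)^\alpha)\big),
\]
and after dividing by $g(B_1/x)$... wait — here $g((B_1/x)^\alpha)$ versus $g(B_1/x)$: the statement \eqref{eq:taub-lower} has $g(B_1/x)$ in the denominator and $x^{d/\alpha}$ in the numerator, so I should instead parametrize by $x_t = (B_1/x)^{1/\alpha}$ so that $x_t^\alpha = B_1/x$, giving $t = (B_1/x)^{(d+\alpha)/\alpha} g(B_1/x)$, $h(t) = g(B_1/x)$, and $t^{\frac{d}{d+\alpha}}(h(t))^{\frac{\alpha}{d+\alpha}} = (B_1/x)^{d/\alpha} g(B_1/x)$. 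Then dividing the displayed inequality by $x^{-d/\alpha} g(B_1/x)$ yields exactly $\liminf_{x\to 0^+} \frac{x^{d/\alpha}}{g(B_1/x)}\log\rho(x) \geq -(A_1+\varepsilon)B_1^{d/\alpha}$, and letting $\varepsilon \to 0$ gives \eqref{eq:taub-lower}. One must also check $t(x) \to \infty$ as $x \to 0^+$, which holds since $g > 0$ and the power of $(B_1/x)$ is positive.

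For part (ii) I would run the dual argument starting from $\log\rho(x) \leq tx + \log L(t)$, valid for all $t, x > 0$. The subtlety is that now the $tx$ term is a genuine cost that must be beaten by $\log L(t) \leq -(A_2-\varepsilon)t^{\frac{d}{d+\alpha}}(h(t))^{\frac{\alpha}{d+\alpha}}$. Choosing again $x_t^\alpha = B_2/x$ (so $t = (B_2/x)^{(d+\alpha)/\alpha} g(B_2/x)$), the term $tx = B_2 (B_2/x)^{d/\alpha} g(B_2/x)$ while the Laplace term contributes $-(A_2-\varepsilon)(B_2/x)^{d/\alpha}g(B_2/x)$; combining, $\log\rho(x) \leq -(A_2-\varepsilon - B_2)(B_2/x)^{d/\alpha}g(B_2/x)$, which after normalization gives $\limsup \frac{x^{d/\alpha}}{g(B_2/x)}\log\rho(x) \leq -(A_2 - B_2 - \varepsilon)B_2^{d/\alpha}$, and $\varepsilon \to 0$ finishes it; the constraint $B_2 < A_2$ is exactly what makes the leading constant positive. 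The main obstacle — really the only nonroutine point — is handling the tail integral in part (i): I need to verify rigorously that with the chosen $t(x)$, the contribution $\int_{(x,\infty)} e^{-tx'}\rho(\mathrm{d}x')$ is not merely small but negligible on the scale $x^{-d/\alpha}g(B_2/x)$ against which we measure $\log\rho(x)$; this requires the a priori exponential upper bound on $\rho$ (from finiteness of $L(t)$ for all $t>0$, or even just $L(1)<\infty$) together with the observation that $tx' - x' \geq (t-1)x' \geq (t-1)x$ grows much faster than $(B_2/x)^{d/\alpha}g(B_2/x)$ since $t(x) \cdot x = B_2(B_2/x)^{d/\alpha}g(B_2/x) \to \infty$ much faster — a clean monotonicity/order-of-growth comparison that I would spell out but which poses no real difficulty.
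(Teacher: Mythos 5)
Your proposal is correct and matches the paper's overall strategy: work at the Laplace-transform level, use the elementary Chebyshev bound for the upper estimate (ii), split the integral for the lower estimate (i), and parametrize $t$ and $x$ through the relation $x_t^{\alpha} = B_i/x$ (equivalently $x = B_i\,t^{\gamma-1}h(t)^{1-\gamma}$ with $\gamma = d/(d+\alpha)$, which is the substitution the paper makes). Part (ii) in your write-up is essentially identical to the paper's, up to their passing to $\widetilde L(t) = L(t)/t = \int_0^\infty e^{-tx}\rho(x)\,dx$ (the Laplace transform of the distribution function rather than of the measure), which only contributes a harmless $\log t$.

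The genuine difference is in how the tail $\int_{(x,\infty)} e^{-tx'}\rho(\mathrm{d}x')$ is controlled in part (i). The paper works with $\widetilde L$, splits the tail integrand as $e^{-t\varepsilon x'}\cdot e^{-t(1-\varepsilon)x'}$, absorbs the first factor into $\widetilde L(\varepsilon t)$, and pulls $e^{-(1-\varepsilon)B_1 t^{\gamma}h(t)^{1-\gamma}}$ out of the second; it then applies the $\liminf$ hypothesis a second time at argument $\varepsilon t$, and the restriction $\varepsilon < (B_1-A_1)/(B_1+1)$ makes the correction term $o(1)$. You instead invoke the crude a priori bound $\rho(y)\leq e^{y}L(1)$ (valid because $L(1)<\infty$) and integrate by parts to get $\int_{(x,\infty)} e^{-tx'}\rho(\mathrm{d}x') \leq \tfrac{t}{t-1}L(1)e^{-(t-1)x}$; with $t(x)$ as you chose, $tx = B_1(B_1/x)^{d/\alpha}g(B_1/x)$, which (since $B_1 > A_1+\varepsilon$) beats the main-term exponent $(A_1+\varepsilon)(B_1/x)^{d/\alpha}g(B_1/x)$ coming from $L(t)$, so the tail is indeed $o(L(t))$. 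Both routes give the result; yours uses only finiteness of $L$ at a single point plus the hypothesis once, while the paper's is closer to the classical exponential Tauberian technique of reusing the hypothesis at a scaled argument and avoids invoking the a priori exponential growth bound on $\rho$. One cosmetic remark: in your final paragraph you wrote $g(B_2/x)$ where, for part (i), it should read $g(B_1/x)$, but this is a slip of notation only.
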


\begin{proof}
(i)  Assume that \eqref{eq:taub-lower-assump} holds.
	To shorten the notation, denote  $\gamma=\frac{d}{d+\alpha}.$ Then the rate in the denominator of \eqref{eq:taub-lower-assump} (and of \eqref{eq:taub-upper-assump}) is
	equal to $t^\gamma (h(t))^{1-\gamma}.$
	Let
	\[\widetilde L(t):=\int _0^\infty  {\rm e}^{-tx}  \rho(x){\rm d}x =\frac{1}{t}\,L(t)\]
	(the last identity is obtained via integration by parts). It follows that \eqref{eq:taub-lower-assump} is satisfied for $\widetilde L(t) $ as well.
	Let $\epsilon >0$ be given. Then there is $t_\epsilon>0$ such that for $t>t_\epsilon$ one has
	\begin{equation}\label{eq:el-tilde}
	\widetilde L(t)\geq {\rm e}^{-(A_1+\epsilon)t^\gamma (h(t))^{1-\gamma}}.
	\end{equation}
	Next, take $B_1>A_1$ and write
\begin{equation}\label{eq:taub-1}
\int_0^{B_1t^{\gamma-1}(h(t))^{1-\gamma}} {\rm e}^{-tx}\rho(x)\,{\rm d}x = \widetilde L(t) -
\int_{B_1t^{\gamma-1}(h(t))^{1-\gamma}}^\infty   {\rm e}^{-tx}\rho(x)\,{\rm d}x.
\end{equation}
The left-hand side of \eqref{eq:taub-1} is not bigger than
\[
\rho(B_1t^{\gamma-1}(h(t))^{1-\gamma}) \int_0^{B_1t^{\gamma-1}(h(t))^{1-\gamma}} {\rm e}^{-tx}{\rm d}x =
\rho(B_1t^{\gamma-1}(h(t))^{1-\gamma}) \frac{1- {\rm e}^{-B_1t^\gamma(h(t))^{1-\gamma}}}{t}.
\]
Moreover,
\begin{eqnarray*}
\int_{B_1t^{\gamma-1}(h(t))^{1-\gamma}}^\infty   {\rm e}^{-tx}\rho(x)\,{\rm d}x&=&
\int_{B_1t^{\gamma-1}(h(t))^{1-\gamma}}^\infty   {\rm e}^{-t\epsilon x}{\rm e}^{-tx(1-\epsilon)}\rho(x)\,{\rm d}x\\
&\leq & {\rm e}^{-(1-\epsilon)B_1t^\gamma (h(t))^{1-\gamma}}\widetilde L(\epsilon t),
\end{eqnarray*}
which in the light of \eqref{eq:el-tilde} yield
\begin{eqnarray*}
\rho(B_1t^{\gamma-1}(h(t))^{1-\gamma}) \frac{1- {\rm e}^{-B_1t^\gamma(h(t))^{1-\gamma}}}{t}&\geq& {\rm e}^{-(A_1+\epsilon)t^\gamma (h(t))^{1-\gamma}}-\widetilde L(\epsilon t){\rm e}^{-B_1(1-\epsilon)t^\gamma h(t)^{1-\gamma}}\\
&=& 	{\rm e}^{-(A_1+\epsilon)t^\gamma (h(t))^{1-\gamma}}\left(1- \widetilde L(\epsilon t){\rm e}^{t^\gamma h(t)^{1-\gamma}[-B_1(1-\epsilon)+(A_1+\epsilon)]}\right)\\
&=& 	{\rm e}^{-(A_1+\epsilon)t^\gamma (h(t))^{1-\gamma}}(1+o(1)),\quad t\to\infty,
	\end{eqnarray*}
provided $\epsilon<\frac{B_1-A_1}{B_1+1}.$
It follows
\begin{eqnarray*}
\log \rho(B_1t^{\gamma-1}(h(t))^{1-\gamma})&\geq& -(A_1+\epsilon)t^\gamma (h(t))^{1-\gamma}\\
&& +\log(1+o(1)) +\log t -\log(1-{\rm e}^{B_1t^\gamma (h(t))^{1-\gamma}})
\end{eqnarray*}

As mentioned above, $t^\gamma (h(t))^{1-\gamma}\to\infty$ as $t\to\infty,$ and consequently, for any $B_1>A_1$ and $\epsilon$ sufficiently small,
\begin{equation}\label{eq:taub-lower-2}
 \liminf_{t\to\infty}\frac{\log \rho(B_1t^{\gamma-1}(h(t))^{1-\gamma})}{t^\gamma (h(t))^{1-\gamma}}\geq -(A_1+\epsilon).
 \end{equation}
 Now, the number $\epsilon$ on the right-hand side can be sent to zero and eliminated.

To conclude the proof of part (i), substitute $x=B_1t^{\gamma-1}(h(t))^{1-\gamma}$. We need to write $t^\gamma (h(t))^{1-\gamma}$ as a function of $x.$
Recall that
\[t=x_{t}^{d+\alpha} g(x_t^\alpha) = x_t^{d+\alpha}h(t),\quad \mbox{i.e.}\quad \frac{h(t)}{t}= x_t^{-(d+\alpha)}.\]
It means
\[x=B_1\left(\frac{h(t)}{t}\right)^{1-\gamma}=\frac{B_1}{x_t^\alpha}. \]
Consequently,
\[t^\gamma (h(t))^{1-\gamma}= \frac{tx}{B_1}=\frac{t}{x_t^\alpha}=x_t^d g(x_t^\alpha) = \frac {B_1^{d/\alpha}}{x^{d/\alpha}}  g(\frac{B_1}{x}). \]
Assertion \eqref{eq:taub-lower} follows.

 (ii)  Assume now that \eqref{eq:taub-upper-assump} holds.  By an argument identical as above, \eqref{eq:taub-upper-assump} is satisfied for $\widetilde L(t).$
 Then for any $\epsilon>0$ there is $t_\epsilon>0$ such that for $t>t_\epsilon$
 \[\widetilde L(t)\leq {\rm e}^{-(A_2-\epsilon)t^\gamma  (h(t))^{1-\gamma}},\]
and on the other hand, with any $B_2>0,$
\[
\widetilde L(t)\geq \int_{B_2t^{\gamma-1}(h(t))^{1-\gamma}}^\infty {\rm e}^{-tx}{\rho(x)\,{\rm d}x \geq \rho(B_2t^{\gamma-1}(h(t))^{1-\gamma})\cdot \frac{1}{t} {\rm e}^{-B_2t^\gamma (h(t))^{1-\gamma}}}.\]
Using both these inequalities, taking logarithm and rearranging we arrive at
\[\frac{\log\rho(B_2t^{\gamma-1}(h(t))^{1-\gamma})}{t^\gamma (h(t))^{1-\gamma}} \leq \frac{\log t}{t^\gamma (h(t))^{1-\gamma}}-(A_2-\epsilon-B_2),\quad t>t_\epsilon\]
and further
\begin{equation}\label{eq:taub-upper-1}
\limsup_{t\to\infty}\frac{\log\rho(B_2t^{\gamma-1}(h(t))^{1-\gamma})}{t^\gamma (h(t))^{1-\gamma}}\leq -(A_2-\epsilon-B_2),
\end{equation}
which yields a viable result for $B_2<A_2$ (again, $\epsilon$ can be eliminated).

To conclude  the proof, similarly as in part (i), we substitute $x=B_2t^{\gamma-1}(h(t))^{1-\gamma}$, getting
\[t^\gamma (h(t))^{1-\gamma}= \frac{tx}{B_2}=\frac{t}{x_t^\alpha}=x_t^d g(x_t^\alpha) = \frac {B_2^{d/\alpha}}{x^{d/\alpha}} g(\frac{B_2}{x}). \]
This gives \eqref{eq:taub-upper} and completes the proof.
\end{proof}
\begin{remark}
	{\rm In the exponential Tauberian theorems without a correction term (cf. \cite[Theorem 2.1]{bib:F}), one was able to handle constants $B_1$ and $B_2.$ Now we do not know, in general, what the function $g$ looks like and how  it behaves asymptotically. We will be able to get rid of those constants in particular cases  only. }
\end{remark}

\subsection{Asymptotics of the IDS}
Finally,  applying the Tauberian theorem from the previous section, we give the formal proof of Lifshitz tail asymptotics of the IDS in Theorem \ref{th:IDS-asymp} stated in the Introduction.

\begin{proof}[Proof of Theorem \ref{th:IDS-asymp}]
The result follows directly from Theorems \ref{th:upper-short}, \ref{th:lower} and \ref{th:tauberian}  with
$g(x)=\log \frac{1}{F_q(D_0/x)},$  $j(x)=x^{d+\alpha}g(x^{\alpha}),$
$x_t=j^{-1}(t),$ and $h(t)=g(x_t^\alpha)$ as in Section \ref{sec:rate_der}.
\end{proof}

We complement our presentation with  less precise statements (the `loglog' regime), matching the usual statement of the  Lifshitz tail sometimes  found in the literature.

First we show that the behavior of $\log g(x)$, $\log x_t$ and $\log h(t)$  (see Section \ref{sec:rate_der})  at infinity are closely related.

\begin{lemma}\label{lem:h-and-g}  Let $g(x)=\log \frac{1}{F_q(D_0/x)},$  $j(x)=x^{d+\alpha}g(x^{\alpha}),$
	$x_t=j^{-1}(t),$ and $h(t)=g(x_t^\alpha)$.
The following three conditions are equivalent:
\begin{itemize}
\item[(i)]  $\lim_{x\to\infty}\frac{\log g(x)}{\log x} $ exists and is equal to $a\in[0,\infty];$

\item[(ii)]  $\lim_{t\to\infty}\frac{\log x_t}{\log t} $ exists and is equal to $b\in[0,1/(d+\alpha)];$

\item[(iii)] $\lim_{t\to\infty}\frac{\log h(t)}{\log t} $ exists and is equal to $c\in[0,1].$
\end{itemize}

\noindent Numbers $a$, $b$ and $c$ are related through
$$
b = \frac{1}{d+(a+1)\alpha} \quad \text{and} \quad c = 1 - (d+\alpha)b = 1- \frac{d+\alpha}{d+(a+1)\alpha}
$$
(here we use the standard convention $1/+\infty = 0$ and $1/0^{+} = + \infty$).
\end{lemma}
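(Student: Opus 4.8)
The plan is to establish the chain of equivalences $(i)\Leftrightarrow(ii)\Leftrightarrow(iii)$ by exploiting the two defining relations among $g$, $x_t$ and $h$: namely $t = x_t^{d+\alpha} g(x_t^{\alpha}) = x_t^{d+\alpha} h(t)$ and $h(t) = g(x_t^{\alpha})$. The key observation is that taking logarithms turns these multiplicative relations into additive ones. From $t = x_t^{d+\alpha} h(t)$ we get $\log t = (d+\alpha)\log x_t + \log h(t)$, and from $h(t) = g(x_t^{\alpha})$ we get $\log h(t) = \log g(x_t^{\alpha})$. Since $x_t \to \infty$ as $t \to \infty$, the quantity $\log g(x_t^{\alpha})/\log(x_t^{\alpha})$ has the same limiting behaviour (if any) as $\log g(x)/\log x$ as $x \to \infty$, so one readily gets $\log h(t) \sim a \cdot \alpha \log x_t$ under assumption $(i)$.

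First I would prove $(i)\Rightarrow(ii)$. Assume $\lim_{x\to\infty}\frac{\log g(x)}{\log x}=a$. Fix $\epsilon>0$; then for $x$ large, $(a-\epsilon)\log x \leq \log g(x) \leq (a+\epsilon)\log x$, so $\log g(x^{\alpha}) = (a+o(1))\alpha\log x$. Applying this with $x=x_t$ and inserting into $\log t = (d+\alpha)\log x_t + \log g(x_t^{\alpha})$ gives
\[
\log t = \big(d+\alpha + (a+o(1))\alpha\big)\log x_t = \big(d + (a+1)\alpha + o(1)\big)\log x_t,
\]
whence $\frac{\log x_t}{\log t} \to \frac{1}{d+(a+1)\alpha} =: b$. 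One checks this lies in $[0,1/(d+\alpha)]$ since $a\in[0,\infty]$, using the convention $1/(+\infty)=0$; the extreme case $a=\infty$ needs a separate one-line argument (the coefficient of $\log x_t$ tends to $+\infty$, forcing $\log x_t/\log t \to 0$).

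Next, $(ii)\Rightarrow(iii)$ is immediate from the identity $\log h(t) = \log t - (d+\alpha)\log x_t$: dividing by $\log t$ and passing to the limit gives $c = 1-(d+\alpha)b$, which lies in $[0,1]$ because $b\in[0,1/(d+\alpha)]$. For $(iii)\Rightarrow(i)$ I would run the argument in reverse: from $\log h(t)/\log t \to c$ and $\log h(t) = \log t - (d+\alpha)\log x_t$ we recover $\log x_t/\log t \to b = (1-c)/(d+\alpha)$, hence $(ii)$ holds; then from $h(t)=g(x_t^{\alpha})$ and $\log h(t) = (1-(d+\alpha)b + o(1))\log t = (1-(d+\alpha)b)(d+\alpha)^{-1}\cdot(d+\alpha)\log t$ together with $\log(x_t^{\alpha}) = \alpha\log x_t = \alpha b\log t (1+o(1))$, we get $\frac{\log g(x_t^{\alpha})}{\log(x_t^{\alpha})} \to \frac{1-(d+\alpha)b}{\alpha b}$, which is the candidate for $a$. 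Since $x_t$ ranges over an interval tending to $\infty$ and $j$ (hence $t\mapsto x_t$) is continuous and increasing for large arguments, the values $x_t^{\alpha}$ sweep out a neighbourhood of $+\infty$, so the limit of $\log g(y)/\log y$ along $y=x_t^{\alpha}$ forces the full limit as $y\to\infty$; solving $a = \frac{1-(d+\alpha)b}{\alpha b}$ for the relation between $a$ and $b$ recovers $b = \frac{1}{d+(a+1)\alpha}$ as required.

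The main obstacle I anticipate is not conceptual but bookkeeping: carefully handling the boundary cases $a=0$, $a=\infty$, $b=0$, $c=0$, $c=1$ with the stated conventions $1/(+\infty)=0$ and $1/0^{+}=+\infty$, and making sure the ``$o(1)$'' manipulations are legitimate when the limit is $0$ or $+\infty$ (e.g. when $a=\infty$ one cannot write $\log g(x) = (a+o(1))\log x$ and must instead argue with one-sided bounds $\log g(x)\geq K\log x$ for arbitrarily large $K$). Also one must confirm that in passing from ``limit along $y = x_t^{\alpha}$'' to ``limit as $y\to\infty$'' the map $t\mapsto x_t^{\alpha}$ is eventually a continuous increasing surjection onto a half-line, which follows from the setup in Section \ref{sec:rate_der} (where $j$ is increasing and continuous on $[x_0,\infty)$ with $j(x)\to\infty$). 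Everything else is routine division by $\log t$ and passage to the limit.
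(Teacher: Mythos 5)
Your proof is correct and follows essentially the same route as the paper's: both rest on the two identities $\log t = (d+\alpha)\log x_t + \log g(x_t^{\alpha})$ and $\log h(t)=\log t-(d+\alpha)\log x_t$, both deduce $(i)\Rightarrow(ii)$ by dividing and passing to the limit using $x_t\to\infty$, and both recover $(i)$ from the others via the observation that $t\mapsto x_t^{\alpha}$ is a continuous increasing surjection onto a half-line. The only cosmetic difference is the order of implications (you chain $(i)\Rightarrow(ii)\Rightarrow(iii)\Rightarrow(i)$, the paper proves $(ii)\Leftrightarrow(iii)$ directly and then $(i)\Leftrightarrow(ii)$), and you flag explicitly the one-sided-bound handling of the extremal case $a=\infty$, which the paper leaves implicit.
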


\begin{proof}
Recall that by \eqref{eq:t-and-xt}
\begin{equation}
t= x_{t}^{d+\alpha}g(x_t^\alpha)= x_t^{d+\alpha}h(t), \quad  t \geq t_0.
\end{equation}
In particular,
\begin{align} \label{eq:first_aux}
\log h(t)=\log t-(d+\alpha)\log x_t, \quad  t \geq t_0,
\end{align}
and
\begin{align} \label{eq:second_aux}
\frac{\log g(x_t^{\alpha})}{\log x_t^{\alpha}} = \frac{\log h(t)}{\log x_t^{\alpha}} = \frac{1}{\alpha} \frac{\log t}{ \log x_t} - \frac{d+\alpha}{\alpha}, \quad  t \geq t_0.
\end{align}
It follows directly from \eqref{eq:first_aux} that (ii)  and (iii) are equivalent and $c= 1 - (d+\alpha)b$ (or, equivalently, $b = (1-c)/(d+\alpha)$).
Moreover, by \eqref{eq:second_aux} and by the fact that $x_t \to \infty$ as $t \to \infty$, we see that (i) implies (ii) and then $b = 1/(d+(a+1)\alpha)$
(In particular, (i) gives (iii) with $c = 1-(d+\alpha)/(d+(a+1)\alpha).$) The converse implication (ii) $\Rightarrow$ (i) also follows from \eqref{eq:second_aux} by the fact that $[t_0, \infty) \ni t \mapsto x_t^{\alpha}$ is a continuous and increasing function onto $[x_{t_0}^{\alpha}, \infty)$.
\end{proof}

\begin{corollary}\label{coro:loglog}
Suppose that {\bf (B)}, {\bf (Q)},  and {\bf(W)} hold true. If $\lim_{x\to\infty}\frac{\log g(x)}{\log x}$ exists,
then
$$
\lim_{x\to 0^+}\frac{\log|\log\ell(x)|}{\log x}= -\frac{d}{\alpha}- \lim_{x\to \infty}\frac{\log g(x)}{\log x}
$$
and
$$
\lim_{t\to\infty}\frac{\log|\log L(t)|}{\log t} = 1- \frac{\alpha}{d+\left(1+\lim_{x\to \infty}\frac{\log g(x)}{\log x}\right)\alpha}.
$$
In particular, when $g(x)$ is of order lower than power-law (i.e. $\lim_{x\to \infty}\frac{\log g(x)}{\log x} = 0$), then
$$
\lim_{x\to 0^+}\frac{\log|\log\ell(x)|}{\log x}= -\frac{d}{\alpha}
$$
and
$$
\lim_{t\to\infty}\frac{\log|\log L(t)|}{\log t} = \frac{d}{d+\alpha}.
$$
\end{corollary}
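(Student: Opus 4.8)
The plan is to read the double-logarithmic asymptotics straight off the two-sided exponential bounds already established, so the argument is really bookkeeping together with Lemma \ref{lem:h-and-g}. Throughout I write $a:=\lim_{x\to\infty}\frac{\log g(x)}{\log x}\in[0,\infty]$ (which exists by hypothesis), $\gamma:=\frac{d}{d+\alpha}$, so $1-\gamma=\frac{\alpha}{d+\alpha}$.

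\textbf{The statement for $L$.} Combining Theorems \ref{th:upper-short} and \ref{th:lower} gives constants $0<C_1\le C_2<\infty$ with $C_1\le -\log L(t)\big/\big(t^{\gamma}h(t)^{1-\gamma}\big)\le C_2$ for all large $t$; in particular $L(t)\to 0$, so $|\log L(t)|=-\log L(t)$ is positive and well defined for large $t$, and taking logarithms
\[
\log|\log L(t)| = \gamma\log t + (1-\gamma)\log h(t) + O(1),\qquad t\to\infty .
\]
Dividing by $\log t$ and using the implication (i)$\Rightarrow$(iii) of Lemma \ref{lem:h-and-g}, one has $\log h(t)/\log t\to c:=1-\frac{d+\alpha}{d+(a+1)\alpha}$, hence
\[
\lim_{t\to\infty}\frac{\log|\log L(t)|}{\log t}=\gamma+(1-\gamma)c=1-\frac{\alpha}{d+(a+1)\alpha},
\]
after a one-line simplification (convention $1/\infty=0$ when $a=\infty$).

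\textbf{The statement for $\ell$.} From Theorem \ref{th:IDS-asymp}, for each small $\epsilon>0$ there is $\lambda_\epsilon>0$ such that for $0<\lambda<\lambda_\epsilon$
\[
(\widetilde C-\epsilon)\,\lambda^{-d/\alpha}g(\widetilde D/\lambda)\ \le\ |\log\ell(\lambda)|\ \le\ (C+\epsilon)\,\lambda^{-d/\alpha}g(D/\lambda),
\]
the lower bound in particular forcing $\ell(\lambda)\in(0,1)$, so the inner logarithm makes sense. I take logarithms and divide by $\log\lambda<0$ (which reverses the inequalities); the multiplicative constants contribute $O(1)/\log\lambda\to 0$, so everything hinges on $\lim_{\lambda\to 0^+}\frac{\log g(\Theta/\lambda)}{\log\lambda}$ for $\Theta\in\{D,\widetilde D\}$. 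The one point needing a moment's thought is that this limit is independent of the constant $\Theta>0$: substituting $y=\Theta/\lambda$, so $\log\lambda=\log\Theta-\log y$ and $y\to\infty$,
\[
\lim_{\lambda\to 0^+}\frac{\log g(\Theta/\lambda)}{\log\lambda}=\lim_{y\to\infty}\frac{\log g(y)}{\log\Theta-\log y}=-\lim_{y\to\infty}\frac{\log g(y)}{\log y}=-a
\]
(equal to $-\infty$ if $a=\infty$). Thus both bounds converge to $-\frac{d}{\alpha}-a$, and
\[
\lim_{\lambda\to 0^+}\frac{\log|\log\ell(\lambda)|}{\log\lambda}=-\frac{d}{\alpha}-a .
\]
The two ``in particular'' formulas are the case $a=0$, where the right-hand sides become $-d/\alpha$ and $\gamma=d/(d+\alpha)$ respectively.

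\textbf{Main obstacle.} There is no genuine analytic content here: the two-sided exponential rates for $\log L(t)$ and $\log\ell(\lambda)$, and the passage between the auxiliary functions $g$ and $h$, were all carried out in Theorems \ref{th:upper-short}, \ref{th:lower}, \ref{th:IDS-asymp} and Lemma \ref{lem:h-and-g}. The only delicate points are verifying that the several distinct structural constants $D$, $\widetilde D$, $D_0$ sitting inside $g$ become invisible after the double logarithm, and handling the degenerate regime $a=\infty$ (e.g.\ Example \ref{ex:distr}(4)) via the convention $1/\infty=0$.
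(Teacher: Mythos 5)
Your proof is correct and follows essentially the same route as the paper's: both the $\ell$-statement and the $L$-statement are obtained by taking logarithms of the two-sided exponential bounds from Theorems \ref{th:IDS-asymp}, \ref{th:upper-short}, \ref{th:lower}, dividing by $\log\lambda$ (resp.\ $\log t$), and invoking Lemma \ref{lem:h-and-g} to identify the resulting limit. Your version is somewhat more explicit than the paper's terse one-line argument, in particular in spelling out why the structural constants $D,\widetilde D$ inside $g$ drop out after the double logarithm and in noting that the degenerate case $a=\infty$ is handled by the convention $1/\infty=0$; but the underlying idea is identical.
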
	

\begin{proof}
The assertion for the IDS follows directly from the estimates in Theorem \ref{th:IDS-asymp} and the definition of $\limsup$ and $\liminf$. For a proof of the second assertion, for $L(t)$, observe that by Theorems \ref{th:upper-short} and \ref{th:lower}, the definition of $\limsup$ and $\liminf$, and \eqref{eq:first_aux}, we have
$$
\lim_{t\to\infty}\frac{\log|\log L(t)|}{\log t} = \frac{d}{d+\alpha}+\frac{\alpha}{d+\alpha}\lim_{t\to\infty}\frac{\log h(t)}{\log t} = 1- \alpha \lim_{t \to \infty} \frac{\log x_t}{\log t}.
$$
An application of Lemma \ref{lem:h-and-g} completes the proof.
\end{proof}

\section{Discussion and examples}\label{sec:examples}
 We now discuss several specific classes of distributions $F_q$ to which our results apply directly.
Recall the notation: $g(x)=\log \frac{1}{F_q(D_0/x)},$  $j(x)=x^{d+\alpha} g(x^{\alpha}), $
$x_t=j^{-1}(t),$ $h(t)=g(x_t^\alpha).$ For more clarity, our discussion will be divided into four subsections.

\subsection{ Distribution functions $F_q$ with an atom at zero} \label{ex:atom}

Suppose there exists $\kappa_0 >0$ such that $F_q$ is continuous on $[0,\kappa_0]$ and $F_q(0) > 0$.  Then there are constants $C, \widetilde C>0$ such that
\begin{equation}\label{eq:stat-atom}
-C\leq \liminf_{ \lambda \searrow 0}\lambda^{d/\alpha}\log \ell(\lambda)\leq \limsup_{\lambda \searrow 0}\lambda^{d/\alpha}\log \ell(\lambda)\leq -\widetilde C\quad\mbox{ and }\quad \lim_{\lambda \searrow 0}\frac{\log|\log \ell(\lambda)|}{\log \lambda } =  - \frac{d}{\alpha}.
\end{equation}
Note that in this case we simply have $g(x) \asymp 1$ and $j(x) \asymp x^{d+\alpha}$ for large $x$, and therefore
\[x_t=j^{-1}(t) \asymp t^{\frac{1}{d+\alpha}} \qquad \text{and} \qquad
h(t) \asymp 1 ,\quad t\to\infty.\]
 In \cite{bib:KK-KPP-alloy-stable} we used Sznitman's coarse-graining method (the `enlargement of obstacles method') to derive the Lifschitz tail   in this case -   for alloy-type potentials with random variables $q_{\mathbf i}$ having an atom at 0.  The paper was concerned primarily with  $\Phi(\lambda)=\lambda^{\alpha/2}$, $\alpha \in (0,2]$ (i.e. with the fractional powers of the Laplace operator and the Laplace operator itself) - in this case we  were able to prove the existence of the limit $\lim_{\lambda \searrow 0}\lambda^{d/\alpha}\log \ell(\lambda)$  and to derive its actual value.
The value of this limit was coherent with that obtained for Poisson-type potentials in \cite{bib:Oku,bib:Szn1}.
The method of \cite{bib:KK-KPP-alloy-stable} is also suitable to cover the case of some other subordinate processes, but with no
precise scaling of principal Dirichlet eigenvalues at hand,  in general we would be able to obtain only
the statements for the $\limsup$ and $\liminf,$ exactly as in \eqref{eq:stat-atom} (cf. \cite{bib:KK-KPP2}).

\subsection{Distribution functions $F_q$ with polynomial decay at zero} \label{ex:log-rate}
 This section consists of two parts.

\noindent
(1) 	Suppose that there exist $\gamma_1, \gamma_2 > 0$, $\kappa_0>0$ and  constants $B_1, B_2>0$ such that
$$
B_1\kappa^{\gamma_1}\leq F_q(\kappa)\leq B_2\kappa^{\gamma_2}, \quad \kappa \in [0,\kappa_0].
$$
This example covers all absolutely continuous distributions whose densities near zero behave polynomially or explode at most logarithmically fast  (e.g. uniform, exponential, one-side normal, Weibull, arcsin, and many other distributions). In this case,
$$
 g(x)=\log \frac{1}{F_q(D_0/x)} \asymp \log x, \quad \text{for large \ $x.$}
$$
We then have
\[  j(x)= x^{d+\alpha} g(x^{\alpha}) \asymp x^{d+\alpha} \log x,\quad x\to\infty,\]
giving
\[x_t= j^{-1}(t) \asymp \left(\frac{t}{\log t}\right)^{\frac{1}{d+\alpha}},\quad t\to\infty\]
and
\[h(t)=  g(x_t^{\alpha}) \asymp  \frac{\alpha}{d+\alpha}\log t ,\quad t\to\infty\]
i.e. for some constants $C, \widetilde C>0$
\[-C \leq\liminf_{t\to\infty}\frac{\log L(t)}{t^{\frac{d}{d+\alpha}}(\log t)^{\frac{\alpha}{d+\alpha}}}\leq
\limsup_{t\to\infty}\frac{\log L(t)}{t^{\frac{d}{d+\alpha}}(\log t)^{\frac{\alpha}{d+\alpha}}}\leq -\widetilde C.\]
 Finally, for certain constants $C,\widetilde C>0$,  \[-C\leq \liminf_{\lambda \searrow 0}\frac{\lambda^{d/\alpha}}{\log \lambda} \log \ell(\lambda)\leq \limsup_{\lambda \searrow 0}\frac{\lambda^{d/\alpha}}{\log \lambda} \log \ell(\lambda)\leq - \widetilde C\quad\mbox{ and }\quad \lim_{\lambda \searrow 0}\frac{\log|\log \ell(\lambda)|}{\log \lambda } =  - \frac{d}{\alpha}. \]

\smallskip

\noindent
(2) The asymptotics of the IDS in the loglog regime has been  previously  established    by Kirsch ans Simon in \cite{bib:KS}  for  random Schr\"odinger operators $-\Delta+\sum_{{\bf i} \in \Z^d} q_{{\bf i}}(\omega) W(x-{\bf i})$ with bounded random variables $q_{{\bf i}}$ satisfying the  one-sided bound  $B_1\kappa^{\gamma_1}\leq F_q(\kappa),$ under  somewhat different assumptions on the single-site potential $W .$ Observe that such a one-sided bound is  not sufficient for
determining the term $h(t)$ needed in the `log' regime,  even asymptotically: for example, when there is an atom at zero (cf. Section \ref{ex:atom} above), then the one-sided bound holds, but
$h(t)\asymp 1$, $t \to \infty,$  while still $F_q(\kappa)\geq \mathbb Q[q=0]\geq B_1\kappa^{\gamma_1},$ $\kappa\leq \kappa_0$ - which should be contrasted with the results from part (1) above.

  Our present approach generalizes the results of Kirch and Simon: we are  able to derive
the 'loglog statements' for  both the integrated density of states and its Laplace transform from the more delicate statements in the `log' regime.
Indeed, as in this case  there is a constant $c>0$ such that
\[g(x)=\log \frac{1}{F_q(D_0/x)}\leq c \log x, \quad \mbox{for large $x$},
\]
it follows that
\[\lim_{x\to \infty}\frac {\log g(x)}{\log x} = 0,\]
and from Corollary \ref{coro:loglog} and Lemma \ref{lem:h-and-g} we get
 \[\lim_{\lambda \searrow 0} \frac{\log |\log\ell(\lambda)|}{\log \lambda}= -\frac{d}{\alpha}	 \qquad \mbox{and} \qquad
\lim_{t\to \infty}\frac{\log|\log L(t)|}{\log t}= \frac{d}{d+\alpha}.\]

\noindent Note also that the result for the Laplace transform is new.	 

\subsection{Distribution functions $F_q$ with exponential decay at zero}
  We now give an example what can happen  when the decay of $F_q$ near zero is faster than polynomial. For a fixed $\gamma >0$ we let
\[F_q(\kappa) = {\rm e}^{-\frac{1}{\kappa^\gamma}},\quad \kappa>0.\]
We verify that in this case
$$g(x)=  (x/D_0)^{\gamma},$$
 and further
\[j(x)= D_0^{-\gamma} x^{d+\alpha(1+\gamma)} \;\mbox{ and }\; x_t= (D_0^\gamma t)^{\frac{1}{d+\alpha(1+\gamma)}},\]
which gives
\[h(t)= c t^{\frac{\alpha\gamma}{d+\alpha(1+\gamma)}}=ct^{1-\frac{d+\alpha}{d+\alpha(1+\gamma)}}.\]
Consequently, the rate of decay for the Laplace transform of the IDS is
\[t^{\frac{d}{d+\alpha}}(h(t))^{\frac{\alpha}{d+\alpha}} = t^{1-\frac{\alpha}{d+\alpha(1+\gamma)}}\]
and there exist two constants $C, \widetilde C>0$ for which
\[-C\leq \liminf_{\lambda \searrow 0} \lambda^{\frac{d}{\alpha}+\gamma}\log \ell(\lambda) \leq \limsup_{\lambda \searrow 0} \lambda^{\frac{d}{\alpha}+\gamma}\log \ell(\lambda) \leq -\widetilde C.\]
The loglog limit is
 \[\lim_{\lambda \searrow 0} \frac{\log |\log\ell(\lambda)|}{\log \lambda}= -\frac{d}{\alpha}-\gamma.	\]
It means that we observe an increase in the power of the exponent which is due to the fast decay of the cumulative distribution function of $q.$
It should be noted that when $\gamma\to\infty,$ then the rate of decay of the Laplace transform approaches $t,$ which is the upper bound for the rate possible.

\subsection{Distribution functions $F_q$ with double-exponential decay near zero}
From  \eqref{eq:t-and-xt}  we see that $h(t)/t= x_t^{-d-\alpha}  \to 0$ as $t\to\infty,$ therefore the rate
$$
t^{\frac{d}{d+\alpha}}(h(t))^{\frac{\alpha}{d+\alpha}} \qquad \mbox{with} \qquad h(t)\asymp t, \ \ \ \ t \to \infty,
$$
is never possible. However, it can happen that $\frac{\log h(t)}{\log t}\to 1$ as $t\to\infty,$ which is illustrated by this example.

 For the distribution whose CDF is given by
\[F_q[\kappa]= {\rm e}^{1-{\rm e}^{
\frac{1}{\kappa}}},\quad \kappa>0\]
we have
\[ g(x)= -\log F_q(D_0/x)= {\rm e}^{\frac{x}{D_0}}-1.\]
It then follows from Theorem \ref{th:IDS-asymp} that there exist constants $C, \widetilde C, D, \widetilde D >0$ such that
$$
-C\leq \liminf_{\lambda \searrow 0} \lambda^{d/\alpha}{\rm e}^{ -{D}/{\lambda}}\log \ell(\lambda) \qquad \mbox{and} \qquad
\limsup_{\lambda \searrow 0} \lambda^{d/\alpha}{\rm e}^{-{\widetilde D}/{\lambda}}\log \ell(\lambda)\leq -\widetilde C.
$$
Consequently, we do not have the usual Lifschitz tail; the rate of decay of $\ell(\lambda)$ to zero is double exponential:
 we see that
\[ \widetilde D\leq\liminf_{\lambda \searrow 0} \lambda \log|\log \ell(\lambda)| \leq \limsup_{\lambda \searrow 0} \lambda \log|\log \ell(\lambda)|\leq D.\]
 This justifies the name:  {\em super-Lifchitz} tail.

To determine the asymptotical rate for the Laplace transform $L(t)$ first observe that the function
$$
k(t):=\left(D_0 \log \left(\frac{t}{(D_0 \log t)^{(d+\alpha)/\alpha}} + 1\right) \right)^{1/\alpha}
$$
is the asymptotic inverse
of the function $j(x) = x^{d+\alpha} g(x^{\alpha}) = x^{d+\alpha}\big({\rm e}^{\frac{x^{\alpha}}{D_0}}-1\big)$  as $x\to+\infty$.  Therefore,
$$x_t \asymp k(t) \qquad \mbox{and} \qquad h(t)\asymp \frac{t}{(\log t)^{\frac{d}{\alpha}+1}}, \quad t \to \infty,
$$
resulting in the asymptotics
\[-C \leq\liminf_{t\to\infty}\frac{\log L(t)}{t/\log t}\leq \limsup_{t\to\infty}\frac{\log L(t)}{t/\log t}\leq -\widetilde{C}\]
and
\[\lim_{t\to\infty} \frac{\log|\log L(t)|}{\log t} =1.\]
  As the last remark observe that   the last 'log log assertion' also follows directly from Corollary \ref{coro:loglog}, without the prior knowledge of the asymptotic behavior of the functions $x_t$ and $h(t)$.

 \end{document}